\definecolor{lightgrey}{rgb}{0.9,0.9,0.9}
\definecolor{mygray}{rgb}{0.6,0.6,0.6}
\newtheorem{theorem}{Theorem}
\newtheorem{lemma}[theorem]{Lemma}
\newtheorem{proposition}[theorem]{Proposition}
\let\temp\epsilon
\let\epsilon\varepsilon
\let\varepsilon\temp
\tikzset{->, >=stealth', shorten >=1pt, auto, node distance=1cm, semithick, baseline=(current bounding box.center)}
\newcommand*\circledaux[1]{\tikz[baseline=(char.base)]{
    \node[shape=circle,draw,inner sep=0.8pt] (char) {#1};}}
\NewDocumentCommand{\circled}{ m o }{%
    \IfNoValueTF{#2}{ \circledaux{#1} }{ \stackrel{\circledaux{#1}}{#2} }%
}
\let\temp\dot 
\renewcommand{\dot}[1]{\langle #1\rangle}
\title{Fast Algorithms for Packing Proportional Fairness and its Dual}
\author{%
  Francisco Criado\\
  TU Berlin \\ Berlin, Germany\\%
  \texttt{\href{mailto:criado@math.tu-berlin.de}{criado@math.tu-berlin.de}}
  \And
    David Martínez-Rubio\\
  Zuse Institute Berlin and TU Berlin\\ Berlin, Germany\\
  \texttt{\href{mailto:martinez-rubio@zib.de}{martinez-rubio@zib.de}} 
  \And
    Sebastian Pokutta\\
  Zuse Institute Berlin and TU Berlin\\ Berlin, Germany\\
  \texttt{\href{mailto:pokutta@zib.de}{pokutta@zib.de}} \\
}
\DeclareCiteCommand{\parencite}[\mkbibparens]
  {\usebibmacro{prenote}}
  {\usebibmacro{citeindex}%
    \printtext[bibhyperref]{\usebibmacro{cite}}}
  {\multicitedelim}
  {\usebibmacro{postnote}}
\DeclareCiteCommand*{\parencite}[\mkbibparens]
  {\usebibmacro{prenote}}
  {\usebibmacro{citeindex}%
    \printtext[bibhyperref]{\usebibmacro{citeyear}}}
  {\multicitedelim}
  {\usebibmacro{postnote}}
\DeclareCiteCommand{\footcite}[\mkbibfootnote]
  {\usebibmacro{prenote}}
  {\usebibmacro{citeindex}%
  \printtext[bibhyperref]{ \usebibmacro{cite}}}
  {\multicitedelim}
  {\usebibmacro{postnote}}
\DeclareCiteCommand{\footcitetext}[\mkbibfootnotetext]
  {\usebibmacro{prenote}}
  {\usebibmacro{citeindex}%
   \printtext[bibhyperref]{\usebibmacro{cite}}}
  {\multicitedelim}
  {\usebibmacro{postnote}}
\global\boolfalse{cbx:parens}}
\algnewcommand{\lst}{\texttt{lst}}
\algnewcommand{\slst}{\texttt{slst}}
\algnewcommand{\SEND}{\textbf{send}}
\newsavebox{\algleft}
\newsavebox{\algright}
\renewcommand{\algorithmicensure}{\textbf{Output:}}
\newcounter{algorithmicH}
\let\oldalgorithmic\algorithmic
\renewcommand{\algorithmic}{%
  \stepcounter{algorithmicH}
  \oldalgorithmic}
\renewcommand{\theHALG@line}{ALG@line.\thealgorithmicH.\arabic{ALG@line}}
\title[Fast Algorithms for Packing Proportional Fairness and its Dual]{Fast Algorithms for Packing Proportional Fairness and its Dual}
\begin{document}

\maketitle

\begin{abstract}%
    The proportional fair resource allocation problem is a major problem studied in flow control of networks, operations research, and economic theory, where it has found numerous applications.
    This problem, defined as the constrained maximization of $\sum_i \log x_i$, is known as the packing proportional fairness problem when the feasible set is defined by positive linear constraints and $x \in \Rp^{\n}$. In this work, we present a distributed accelerated first-order method for this problem which improves upon previous approaches. We also design an algorithm for the optimization of its dual problem. Both algorithms are \textit{width-independent}. Finally, we show the latter problem has applications to the volume reduction of bounding simplices in an old linear programming algorithm of \citep{yamnitsky1982}, and we obtain some improvements as a result.
\end{abstract}

\footnotetext{Most of the notations in this work have a link to their definitions. For example, if you click or tap on any instance of $\ecanonical[i]$, you will jump to the place where it is defined as the $i$-th vector of the canonical base.}

\section{Introduction}\label{sec:intro}
The assignment of bounded resources to several agents under some notions of fairness is a topic studied in networking, operations research, game theory, and economic theory. The allocation obtained by the maximization of the function $\sum_{i=1}^{\n} \log(x_i)$ over a convex set $C\subseteq \Rp^{\n}$, known as a \emph{proportional fair allocation}, is an important solution that arises under a natural set of fairness axioms \citep{bertsimas2011, kao2010axiomatic}. It corresponds to Nash bargaining solutions \citep{nash1950bargaining} and it also has applications to multi-resource allocation in compute clusters \citep{bonald2015multiresource, jin2018tradeoff, joewong2012multiresource}, rate control in networks \citep{kelly1997charging} and game theory \citep{jain2010eisenberg, jain2007eisenberg}. Other important allocations are linear objectives (no fairness), the max-min allocations \citep{mo2000}, or $\alpha$-fair allocations \citep{atkinson1970measurement, mo2000, mccormick2014real}, which generalize all of the others. Proportional fairness corresponds to $\alpha=1$. A natural restriction, that many of these applications require, are positive linear constraints. This results in the \emph{packing proportional fairness problem}, also known as the \emph{$1$-fair packing problem}. The main focus of this paper is on solving this problem and its dual via first-order methods. Given $A \in \mathcal{M}_{\newtarget{def:m}{\m}\times \newtarget{def:n}{\n}}(\Rp)$, the $1$-fair packing problem is
\begin{align}\label{eq:primal_problem}
    \tag{1FP} 
    \max_{x\in \Rp^{\n}} \left\{ \newtarget{def:f}{\f}(x) \defi \sum_{i=1}^{\n} \log x_i : Ax \leq \ones_{\m} \right\}.
\end{align}
We also study the optimization of its Lagrange dual, that can be formulated, cf. \cref{lem:lagrange}, as
\begin{align}\label{eq:dual_problem}
	\tag{1FP-Dual}
    \min_{\lambda \in \simplex{\m}} \left\{ \newtarget{def:g}{\g}(\lambda) \defi -\sum_{i=1}^{\n} \log (A^T \lambda)_i - \n \log \n  \right\},
\end{align}
where $\newtarget{def:simplex}{\simplex{\m}}\defi \{\lambda \in \R^{\m} : \sum \lambda_i = 1, \lambda \geq 0 \}$ is the $\m$-dimensional (probability) simplex. We focus on \textit{width-independent} algorithms that  additively $\newtarget{def:epsilon}{\epsilon}$-approximate the optimum of those problems. For the $1$-fair packing problem that means, respectively, that we can find $\bar{x}$ in time that depends at most polylogarithmically on the width $\rho$ of the matrix $A$, and that it satisfies $\f^\ast-\f(\bar{x}) \leq \epsilon$, where $\f^\ast$ is the optimal value. Note that~\eqref{eq:primal_problem} has a unique optimizer, by strong concavity. By the same reason, for every two minimizers $\lambda_1$, $\lambda_2$ of \eqref{eq:dual_problem}, we have $A^T\lambda_1 = A^T\lambda_2$. The width $\newtarget{def:rho}{\rho}$ of $A$ is defined as $\max\{A_{ij}\}/\min_{A_{ij}\neq 0} \{A_{ij}\}$, the maximum ratio of the non-zero entries of $A$.
Note that in general width-dependent algorithms are not polynomial. 
Smoothness and Lipschitz constants of the objectives do not scale polylogarithmically with $\rho$ and thus, direct application of classical first-order methods leads to non-polynomial algorithms. As in packing and covering \LP{}, an approximate solution for our primal problem does not necessarily yield one for the dual problem, cf. \citep{awerbuch2008stateless}, so we need to study them separately. The current form of our techniques does not generalize to $\alpha$-fair problems with $\alpha\neq 1$, but generalizing them to these settings is an interesting future direction of research. We note that previous works treat $\alpha$ in $[0,1)$, $\{1\}$, or $(1,\infty)$ separately, due to the structure of the problems being different.  Most works dealing with $\alpha$-fair functions assume, without loss of generality, that $A$ is given so that the minimum non-zero entry of $A$ is $1$ and the maximum entry is $\rho$. However, in this work, we assume without loss of generality that
\begin{equation} \label{eq:normalization_of_A}
    \max_{i\in[\m]}\{ A_{ij} \} = 1,\text{ for all }j\in[\n].
\end{equation}
We can do so because, for our problem, we can rescale each primal coordinate multiplicatively, rescaling the columns of $A$ accordingly, which only changes the objectives by an additive constant. Thus, the additive guarantees we will obtain are also satisfied in the non-scaled problem.

Our primal algorithm solves the problem in a distributed model of computation with $\n$ agents. Each agent $j\in [\n]$ controls variable $x_j$ and only has access to global parameters like $\m,\n,$ or the target accuracy $\epsilon$, to the $j$-th column of $A$, and in each round it receives the slack $(Ax)_i - 1$ of all the constraints $i$ in which $j$ participates. This is a standard distributed model of computation. We refer to \citep{kelly2014stochastic,awerbuch2008stateless} for its motivation and applications.

\paragraph{Notations}
We let $\newtarget{def:e_i_canonical_basis}{\ecanonical[i]}$ be the vector with $1$ in coordinate $i$ and $0$ elsewhere. We denote by $A_i$ a row of $A$. For $k\in \mathbb{N}$, we use the notation $[k] \defi \{1, 2, \dots, k\}$. Throughout this work, $\log(\cdot)$ represents the natural logarithm. For $v\in\R^{\n}$, the notation $\exp(v)$ means entrywise exponential. We use $\newtarget{def:entrywise_prod}{\odot}$ for the entrywise product. Given a $1$-strongly convex map $\psi$, we denote its Bregman divergence by $\newtarget{def:bregman_div}{\breg}(x,y)\defi \nabla \psi(x)-\nabla \psi(y)-\innp{\nabla \psi(y), x-y}$. We denote by $\newtarget{def:N}{\N}$ the number of non-zero entries of the matrix $A$. The notation $\newtarget{def:big_o_tilde}{\bigotilde{\cdot}}$ omits logarithmic factors with respect to $m$, $n$, $1/\epsilon$ and $\rho$. But note that the rates of our algorithms do not depend on $\rho$.

\paragraph{Related Work}
Despite the importance and widespread applicability of fairness objectives, width-independent (and thus polynomial) algorithms for many $\alpha$-fair packing problems were not developed until recently. Width-independent algorithms were first designed for $0$-fair packing, i.e., for packing linear programming (\newtarget{def:acronym_linear_programming}{\LP{}}), that have a longer history \citep{luby1993parallel}. For this problem there are currently nearly linear-time width-independent iterative algorithms \citep{allen2019nearly} and distributed algorithms \citep{allen2015using, diakonikolas2017solving}. \citep{marasevic2015fast} studied the width-independent optimization of $\alpha$-fair packing problems for any $\alpha \in [0, \infty]$ with a stateless algorithm and \citep{diakonikolas2020fair} gave better rates with a non-stateless algorithm. Both works use the same distributed framework as ours. For the particular case of $1$-fair packing, the latter work obtains an unaccelerated algorithm that runs in $\bigotilde{\n^2/\epsilon^2}$ distributed iterations. \citep{beck2014gradient} study the optimization of the dual problem by using Nesterov's accelerated method, and then they reconstruct a primal solution. However, both primal and dual solutions depend on the smoothness constant of the dual problem, which in the worst case is proportional to $\rho^2$, and therefore it is not a polynomial algorithm. In contrast, our algorithms do not depend on $\rho$ at all. Obtaining a priori lower bounds on each of the coordinates of the optimizer is of theoretical and practical interest, since it provides certain amount of resource that can be assigned to each agent before solving the problem. These were studied in \citep{marasevic2015fast} and were improved by \citep{allybokus2018lower}. In \cref{lemma:lower_bound_on_coordinates_of_x_ast}, we show a lower bound of this kind for our problem when it is normalized as in \eqref{eq:normalization_of_A}. 

\begin{table}
    \centering
    \caption{Comparison of algorithms for $1$-fair packing and its dual. The work of one iteration is linear in $\N$, the number of non-zero entries in $A$.}
    \label{table:comparisons}

\begin{tabular}{lclc}
    \toprule
    \textbf{Paper} & \textbf{Problem} & \textbf{Iterations} & \textbf{Width-dependence?} \\
    
    \midrule
    \midrule
    \citep{beck2014gradient}        & Primal & $\bigo{\rho^{2}\m\n/\epsilon}$         & Yes\\
    \citep{marasevic2015fast}       & Primal & $\bigotilde{\n^5/\epsilon^5}$ & \ \ \ \ \ {\color{mygray}nearly} No {\color{mygray}(polylog)} \\
    \citep{diakonikolas2020fair}    & Primal & $\bigotilde{\n^2/\epsilon^2}$ & \ \ \ \ \ {\color{mygray}nearly} No {\color{mygray}(polylog)}\\
    \textbf{This paper} (\cref{thm:primal_guarantee})     & Primal & $\bigotilde{\n/\epsilon}$  & No \\
    \midrule
    \citep{beck2014gradient}        & Dual   & $\bigo{\rho\sqrt{\m\n/\epsilon}}$    & Yes\\
    \textbf{This paper} (\cref{thm:dual_guarantee})       & Dual   & $\bigotilde{\n^2/\epsilon}$ & No \\
    \bottomrule
\end{tabular}

\end{table}

\paragraph{Contribution and Main Results}
Our contribution can be summarized as follows; See \cref{table:comparisons} for a comparison with previous works.\medskip

\emph{Accelerated algorithm for $1$-fair packing.} We design a distributed accelerated algorithm for $1$-fair packing by generalizing and extending an accelerated technique, designed for packing \LP{}, that uses truncated gradients of a regularized objective \citep{allen2019nearly}. In contrast with this technique, ours yields an algorithm and guarantees that are deterministic. We exploit the structure of our problem to obtain a distributed solution, while for packing \LP{} obtaining a distributed or just parallel algorithm that is accelerated and width-independent is an open question \citep{diakonikolas2017solving}. We make use of a different regularization and an analysis that yields additive error guarantees as opposed to multiplicative ones. 

\emph{The dual problem.} We consider the dual of the $1$-fair packing problem. We reduce the problem to optimizing a proxy function by using the Plotkin-Shmoys-Tardos (\newtarget{def:acronym_plotkin_shmoys_tardos}{\PST{}}) framework \citep{plotkin1995fast, arora2012multiplicative} with a novel geometric separation oracle. Critical to obtaining fast convergence is showing that the oracle parameters decrease when we obtain better solutions. This fact allows to reduce the dependence on $\epsilon$, and as a result, our width-independent algorithm enjoys a convergence rate of $\bigotilde{\n^2/\epsilon}$ iterations for this problem. 

\emph{Algorithm for the minimum-volume simplex problem.} Finally, we use the log-volume interpretation to present a new application of the dual problem to the approximation of the simplex $\Delta^{(k+1)}$ of minimum volume that covers a polytope $\P$, where $\Delta^{(k+1)}$ is given by a previous bounding simplex $\Delta^{(k)}$ containing $\P$, and where exactly one facet is allowed to move. This results in some improvements to the old \emph{method of simplices} algorithm by \citep{yamnitsky1982} for \LP{}.

\section{A distributed accelerated algorithm for 1-Fair Packing} \label{sec:primal}
{


\let\oldbeta\beta
\renewcommand\beta{\newlink{def:beta}{\oldbeta}}
\let\oldomega\omega
\renewcommand\omega{\newlink{def:omega}{\oldomega}}
\let\oldtau\tau
\renewcommand\tau{\newlink{def:tau}{\oldtau}}

\newcommandx*\truncgrad[2][1=j, 2=\fr, usedefault]{\newlink{def:truncgrad}{\overline{\nabla_{#1} #2}}}

\newcommandx*\nuk[2][1=k, 2= , usedefault]{\newlink{def:remainder_trunc_coord_grad}{\nu}^{( #1 )}_{#2}}

\renewcommandx*\x[2][1=k, 2= , usedefault]{\newlink{def:xk_algorithm}{x^{( #1 )}_{#2}}}
\renewcommandx*\y[2][1=k, 2= , usedefault]{\newlink{def:yk_algorithm}{y^{( #1 )}_{#2}}}
\renewcommandx*\z[2][1=k, 2= , usedefault]{\newlink{def:zk_algorithm}{z^{( #1 )}_{#2}}}

\newcommandx*\Ck[2][1=k, 2= , usedefault]{\newlink{def:Ck}{C_{#1}^{#2} }}
\newcommandx*\etak[2][1=k, 2= ,  usedefault]{\newlink{def:eta_k}{\eta_{#1}^{#2} }}

\newcommandx\fr{\newlink{def:f_r}{f_r}}
\newcommandx\fhat{\newlink{def:f_hat}{\hat{f}}}

\renewcommand\L{\newlink{def:L}{L}}

\newcommand\B{\newlink{def:Box}{B}}

\newcommand\xrast{\newlink{def:xrast}{x_r^\ast}}
\newcommand\xasthat{\newlink{def:xasthat}{\hat{x}^\ast}}

\newcommandx*\lossk[2][1=k, 2= , usedefault]{\newlink{def:losses_primal}{\ell^{(#1)}_{#2}}}

In this section, we present the main steps of our algorithm for the primal problem, which is a deterministic accelerated descent method that optimizes an objective coming from the $1$-fair packing problem, and that encodes the constraints in the form of a barrier. Our algorithm approximates the objective additively and allows to compute each iteration in a distributed manner. We note that \citep{diakonikolas2020fair} also made use of this intermediate objective for the $1$-fair packing problem with different constants, but as opposed to their solution, we allow to compute unfeasible solutions during the course of the algorithm, and we proceed with different techniques that allow to achieve acceleration and thus an algorithm with better convergence rates. We defer some proofs to \cref{app:proofs_of_primal_section}.

We reparametrize Problem~\eqref{eq:primal_problem} so that the objective function is linear at the expense of making the constraints more complex. That is, we define the function $\newtarget{def:f_hat}{\fhat}:\R^{\n} \to \R, x\mapsto \f(\exp(x)) = \innp{\ones_{\n}, x}$. The optimization problem becomes
\begin{equation}\label{eq:reparametrization_1_fair_packing}
    \max_{x\in \R^{\n}}\left\{\fhat(x) \defi \innp{\ones_{\n}, x}: A\exp(x) \leq \ones_{\m}\right\}.
\end{equation}
Then, we regularize the negative of the reparametrized objective by adding a fast-growing barrier:
    \begin{align*}
        \newtarget{def:f_r}{\fr}(x) &\defi -\innp{\ones_{\n}, x} + \frac{\beta}{1+\beta}\sum_{i=1}^{\m} (A\exp(x))_i^{\frac{1+\beta}{\beta}}, \nabla_j \fr(x) &= -1  + \sum_{i=1}^{\m} (A\exp(x))_i^{\frac{1}{\beta}} a_{ij} \exp(x_j), 
    \end{align*}
where $\newtarget{def:beta}{\beta} \defi \frac{\epsilon}{6\n \log(2\m\n^2/\epsilon)}$. In this way, we can work with an unconstrained minimization problem. The resulting function is not globally smooth but when the absolute value of a coordinate of the gradient is large, it is positive, and in that case we are able to take a small gradient descent step and decrease the function considerably. The intuition is that if the gradient is large, then the function value along the segment of the gradient step, as a function of the step, can decrease fast. But it cannot increase fast since there are no large negative gradient coordinates. We depict $\fr$ in \cref{fig:barrier_intuition}. The barrier also allows to maintain almost feasibility, as we show in \cref{prop:optimizing_the_smoothened_function_is_enough} below. It is chosen to grow fast enough so that a point satisfying $(A\exp(x))_i > 1+\epsilon/\n$, for some $i\in[\n]$, will have an optimality gap that is greater than the required accuracy. On the other hand, the regularizer is very small in the feasible region that is not too close to the boundary.

\begin{figure}
    \centering
    \begin{minipage}{0.5\textwidth}
        
        \includegraphics[width=\textwidth]{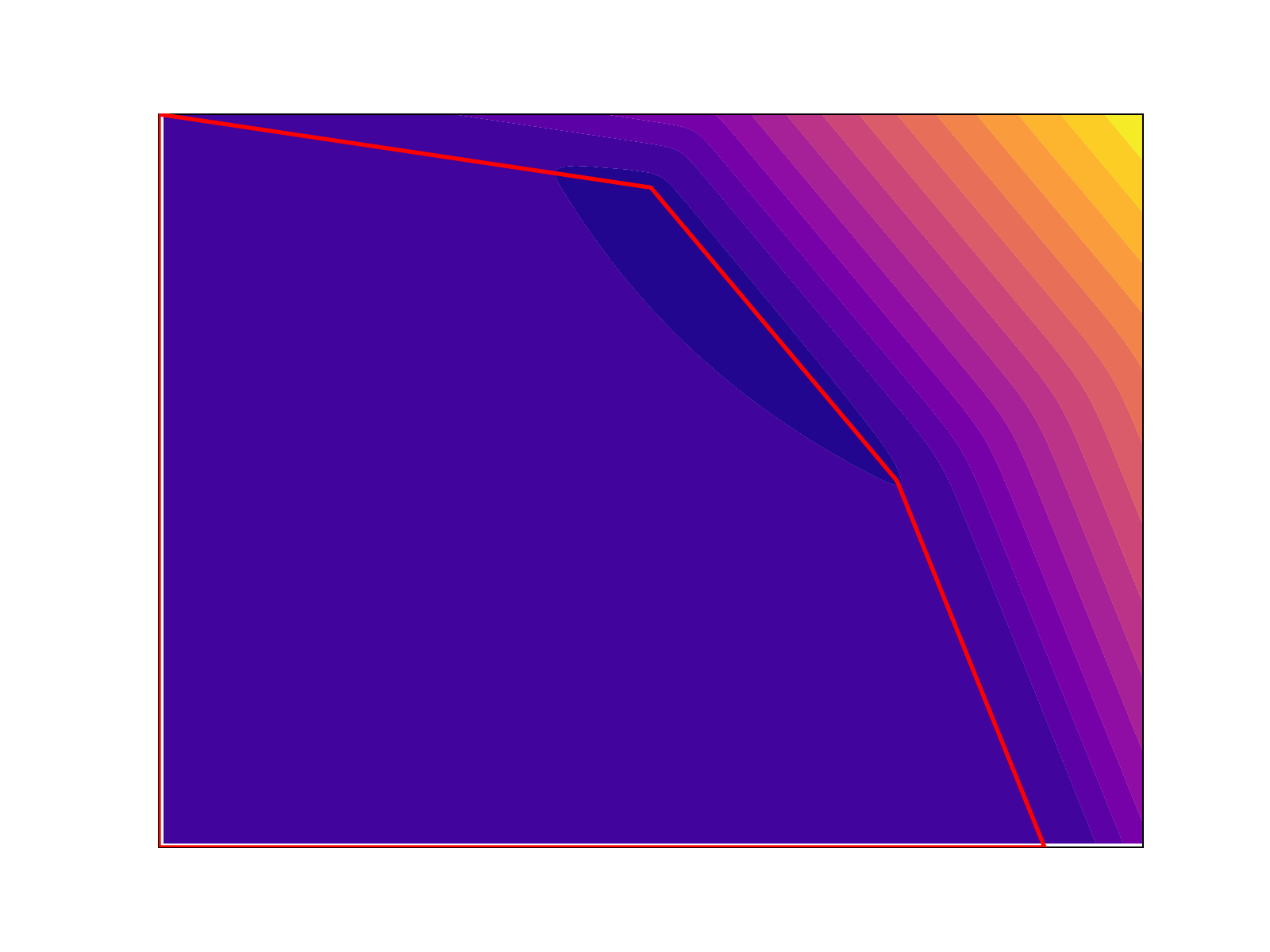}
        
    \end{minipage}\hfill
    \begin{minipage}{0.5\textwidth}
        
        \includegraphics[width=\textwidth]{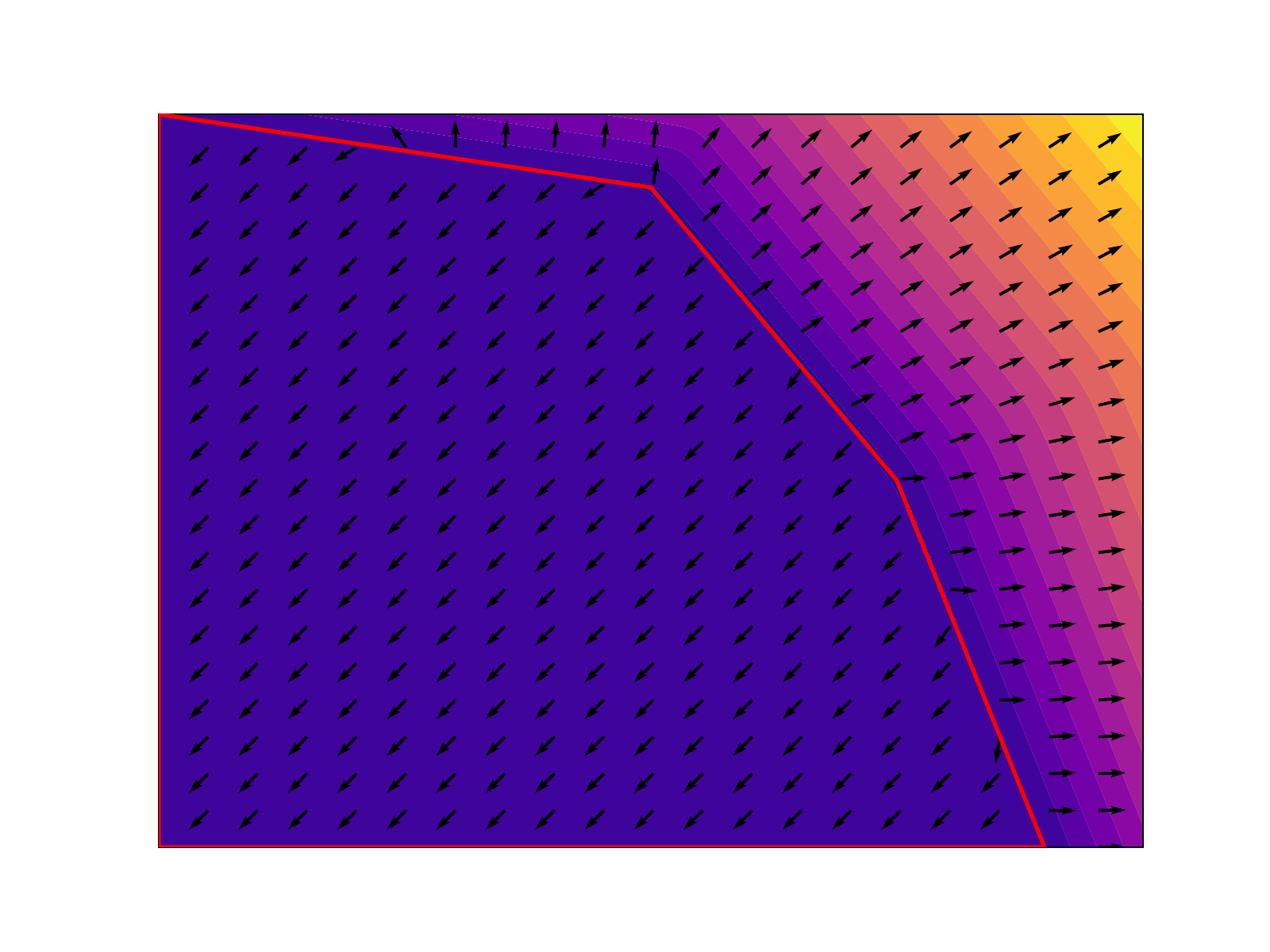}
        
    \end{minipage}
        \caption{Regularized objective $\fr$ (left) and its gradient (right), for a sample matrix $A\in\mathcal{M}_{3\times 2}(\Rp)$. For visualization purposes we show $\log (\fr(x))$ and $\log(\norm{\nabla \fr(x)})$, represented by color, and we indicate the direction of the gradient with normalized arrows. Also, note that we show the results in the original space (i.e., before reparametrizing, so the constraints appear to be linear) but the gradient was computed as originally defined (i.e., after reparametrizing).} 
        \label{fig:barrier_intuition}
\end{figure}

    Let $\newtarget{def:xasthat}{\xasthat}$ be the maximizer of $\fhat$, let $\newtarget{def:xast}{\xast} \defi \exp(\xasthat)$ be the solution to Problem~\eqref{eq:primal_problem}, and let $\newtarget{def:xrast}{\xrast}$ be the minimizer of $\fr$. We have $\xasthat \in [-\log(\n), 0]^{\n}$ by \cref{lemma:lower_bound_on_coordinates_of_x_ast}. Let $\newtarget{def:omega}{\omega} \defi \log(\m\n/(1-\epsilon/\n))$ and define the box $\newtarget{def:Box}{\B} \defi [-\omega, 0]^{\n}$. We restrict ourselves to this domain and formulate our final problem, that we will minimize with an accelerated method:

\begin{align}\label{eq:regularized_reparametrized_1_fair_packing}
    \tag{1FP-primalReg}
    \min_{x\in \B}\fr(x).
\end{align}
    Note $\fr(x) \geq 0$ if $x\in \B$. We add the redundant and simple box constraints $\B$ in order to later guarantee a bound on the regret of the mirror descent method that runs within the algorithm. We show that it suffices to obtain an $\epsilon$-minimizer of Problem~\eqref{eq:regularized_reparametrized_1_fair_packing} in order to obtain an $O(\epsilon)$-minimizer for the original Problem~\eqref{eq:primal_problem}.

\begin{proposition}\label{prop:optimizing_the_smoothened_function_is_enough} \linktoproof{prop:optimizing_the_smoothened_function_is_enough}
    Let $\epsilon\in (0, \n/2]$. Let $\xrast$ be the minimizer of \eqref{eq:regularized_reparametrized_1_fair_packing} and let $x_r^{\epsilon}\in \B$ be an $\epsilon$-minimizer of this problem. Then the point $\bar{u} \defi \exp(x_r^{\epsilon})/(1+\epsilon/\n)$ satisfies $\f(\xast)-\f(\bar{u}) \leq 5\epsilon$ and $A \bar{u} \leq \ones_{\m}$, where $\xast$ is the maximizer of $\f$. 
\end{proposition}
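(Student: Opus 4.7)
The plan is to split the statement into two claims: (a) the rescaled point $\bar u$ is feasible, i.e., $A\bar u \leq \ones_{\m}$; and (b) $\f(\xast) - \f(\bar u) \leq 5\epsilon$. Writing $\log \bar u_j = (x_r^\epsilon)_j - \log(1+\epsilon/\n)$ and $\f(\xast) = \innp{\ones_{\n}, \xasthat}$, the gap in (b) rewrites as $\innp{\ones_{\n}, \xasthat - x_r^\epsilon} + \n\log(1+\epsilon/\n)$, so it suffices to control $\innp{\ones_{\n}, \xasthat - x_r^\epsilon}$ and use $\n\log(1+\epsilon/\n) \leq \epsilon$. Both claims come from the single inequality $\fr(x_r^\epsilon) \leq \fr(y) + \epsilon$ applied to well-chosen reference points $y \in \B$; note that $\xasthat \in [-\log \n, 0]^{\n} \subseteq \B$ by \cref{lemma:lower_bound_on_coordinates_of_x_ast}.

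For (a), I would argue by contradiction. Assume $(A\exp(x_r^\epsilon))_i > 1 + \epsilon/\n$ for some $i$. The $i$-th term of the barrier alone is then at least $\tfrac{\beta}{1+\beta}(1 + \epsilon/\n)^{(1+\beta)/\beta}$, and combining $\log(1+\epsilon/\n) \geq \epsilon/(2\n)$ (valid since $\epsilon \leq \n/2$) with the definition of $\beta$ shows this quantity is polynomially large in $\m\n/\epsilon$. On the other hand, $\fr(\xasthat) \leq \n\log \n + \beta\m$ since $(A\exp(\xasthat))_i \leq 1$ and $-\innp{\ones_{\n}, \xasthat} \leq \n\log \n$, so $\fr(x_r^\epsilon) \leq \n\log \n + \beta\m + \epsilon$, contradicting the barrier lower bound. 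Hence $(A\exp(x_r^\epsilon))_i \leq 1 + \epsilon/\n$ for every $i$, giving $A\bar u \leq \ones_{\m}$.

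The main obstacle is (b), because the naive choice $y = \xasthat$ only yields $\innp{\ones_{\n}, \xasthat - x_r^\epsilon} \leq \epsilon + \beta\m$, and the residual $\beta\m$ may exceed $\epsilon$ when $\m \gg \n$. My plan is to instead compare against the shifted point $\tilde x \defi \xasthat - \gamma \ones_{\n}$ with $\gamma \defi \tfrac{\beta}{1+\beta}\log \m$. This shift forces every $(A\exp(\tilde x))_i^{(1+\beta)/\beta} \leq 1/\m$, so the barrier at $\tilde x$ is at most $\beta/(1+\beta)$, while the linear part $-\innp{\ones_{\n}, \tilde x}$ only grows by $\n\gamma$; the specific value of $\beta$ makes $\n\gamma \leq \epsilon/6$. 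A quick check also confirms $\tilde x \in \B$ using $\gamma \leq \log \m \leq \omega - \log \n$. Combining $\fr(x_r^\epsilon) \leq \fr(\tilde x) + \epsilon$ and dropping the nonnegative barrier on the left yields $\innp{\ones_{\n}, \xasthat - x_r^\epsilon} \leq \epsilon + \epsilon/6 + \beta \leq 4\epsilon$, and finally $\n\log(1+\epsilon/\n) \leq \epsilon$ closes the argument inside the $5\epsilon$ budget.
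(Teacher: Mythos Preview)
Your proposal is correct and follows the same overall decomposition as the paper: a contradiction argument for almost-feasibility of $x_r^\epsilon$, and a comparison against a shifted copy of $\xasthat$ to control $\innp{\ones_{\n},\xasthat - x_r^\epsilon}$. The specific reference points differ, though. For the feasibility step the paper compares against $y=-\log(\m\n)\ones_{\n}$ rather than your $\xasthat$; both work, yours just carries the extra (harmless) $\beta\m$ term in the upper bound. For the gap step the paper shifts by $\log(1-\epsilon/\n)\ones_{\n}$, which pushes $A\exp(\cdot)$ below $(1-\epsilon/\n)\ones_{\m}$ and makes the barrier at most $\epsilon$ via $(1-\epsilon/\n)^{1/\beta}\le (\epsilon/(2\m\n^2))^6$, at a linear cost $\n\log(1/(1-\epsilon/\n))\le 2\epsilon$. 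Your shift $\gamma=\tfrac{\beta}{1+\beta}\log\m$ is chosen instead so that each barrier summand is exactly $\le 1/\m$, giving barrier $\le \beta/(1+\beta)$ at linear cost $\n\gamma\le \epsilon/6$. Your choice is a bit more direct and in fact yields a slightly sharper constant; the paper's choice ties more naturally to the feasibility margin $\epsilon/\n$ used elsewhere. Either way the $5\epsilon$ budget is met.
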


    The intuition about this proposition is that $x_r^{\epsilon}$ is also a point with low $\hat{f}$ value. By the aforementioned barrier guarantees, it is almost feasible, i.e., $A\exp(x_r^{\epsilon}) \leq 1+\epsilon/n$, and dividing the corresponding $\exp(x_r^{\epsilon})$ by $1+\epsilon/n$, and thus making it feasible, can only increase the objective $f$ by $\epsilon$. 

    In the sequel, we will present the different parts of \cref{alg:1_fair_packing} and their analyses. In particular, the notation and definitions used are compatible with the choices in the algorithm and most of the parameter choices naturally occur throughout the arguments.  Our optimization algorithm starts at the points $\x[0] = \y[0] = \z[0] = -\log(\m\n/(1-\epsilon/\n)) \ones_{\n}$ and updates each of these variables $\x, \y$ and $\z$ once in each iteration. They remain in $\B$, by \cref{lemma:iterates_remain_in_B}. The role of the three variables is the following: $\z$ will be a mirror point and $\y$ will be a gradient descent point, in the sense that in order to compute them we apply mirror descent and gradient descent. Then, the point $\x$ will be a convex combination of both, that will balance the regret of $\z$ with the primal progress of $\y$, effectively coupling these two algorithms.

\begin{lemma}\linktoproof{lemma:iterates_remain_in_B} \label{lemma:iterates_remain_in_B}
    The iterates of \cref{alg:1_fair_packing} remain in the box $\B$.
\end{lemma}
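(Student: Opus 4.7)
The plan is to proceed by induction on $k$. The base case $k=0$ is immediate, since $\x[0] = \y[0] = \z[0] = -\omega \ones_{\n} \in \B$ by the very definition of $\omega$. For the inductive step, assume $\x[k], \y[k], \z[k] \in \B$. Two of the three updates at time $k+1$ are essentially automatic: $\x[k+1]$ is a convex combination of $\y[k]$ and $\z[k]$, both in $\B$, so it lies in $\B$ by convexity of the box; and $\z[k+1]$ is produced by a mirror-descent step whose feasible set is exactly $\B$, so it lies in $\B$ by construction of the update.

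The substantive part of the argument is to show that the gradient-type iterate $\y[k+1]$ also remains in $\B$. For this I would use the explicit coordinatewise gradient
\[
\nabla_j \fr(x) = -1 + \sum_{i=1}^{\m} (A\exp(x))_i^{1/\beta} a_{ij} \exp(x_j)
\]
together with the standing normalization $\max_i a_{ij} = 1$. For the upper bound $\y[k+1][j] \leq 0$, the normalization yields some index $i^\ast$ with $a_{i^\ast j} = 1$, so $(A\exp(x))_{i^\ast} \geq \exp(x_j)$; raising to the large exponent $1/\beta = \Theta(\n \log(\m\n/\epsilon)/\epsilon)$ makes the corresponding barrier term in $\nabla_j \fr(\x[k+1])$ grow extremely rapidly as $\x[k+1][j] \to 0$. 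This guarantees that $\truncgrad$ is non-negative on a suitable slab $\{x : x_j \in [-c/\L,\,0]\}$, so a descent step starting from $\x[k+1][j]$ in this slab can only move $y_j$ downward, while outside the slab the remaining distance $-\x[k+1][j]$ from $0$ already exceeds the maximum upward displacement $(1/\L)|\truncgrad|$.

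For the lower bound $\y[k+1][j] \geq -\omega$, the truncation caps the positive part of the coordinate gradient at a controlled value, so the downward displacement per step is bounded by a constant times $1/\L$; the choice $\omega = \log(\m\n/(1-\epsilon/\n))$ provides comfortable margin below the inductive lower bound $\x[k+1][j] \geq -\omega$ to absorb this bounded drift (and in fact the natural lower bound of $-1$ on $\nabla_j \fr$ means that a pure gradient step even tends to push $y_j$ upward when $x$ is well inside the feasible region).

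The main obstacle will be the case analysis for the upper bound: one must argue, uniformly over $\x[k+1] \in \B$, that the positive part of $\truncgrad$ "switches on" early enough in $x_j$ to cancel the residual distance $-\x[k+1][j]$ from the face $\{x_j = 0\}$. This couples the choices of $\beta$, the truncation threshold hidden in $\truncgrad$, and the step size $1/\L$, and is where the specific value of $\beta = \epsilon/(6\n\log(2\m\n^2/\epsilon))$ is exploited so that the exponential $\exp((1+1/\beta)x_j)$ in the barrier summand becomes of order one exactly when $x_j$ is of order $1/\L$.
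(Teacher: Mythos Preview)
Your induction hypothesis is too weak to close the lower-bound case, and the gap is not merely a missing detail. Consider coordinates where $\z[k-1][j]=0$, $\y[k-1][j]=-\omega$, and the remaining coordinates of both equal $0$; all three of $\x[k-1],\y[k-1],\z[k-1]$ lie in $\B$, so your hypothesis is satisfied. Then $\x[k][j]=-(1-\tau)\omega$, and with a matrix like $A=(1,\dots,1)\in\mathcal{M}_{1\times n}$ and $n\ge 3$ one gets $(A\exp(\x[k]))_1\ge n-1\ge 2$, so $\nabla_j\fr(\x[k])\gg 1$ and $\truncgrad[j](\x[k])=1$. The mirror step is unclipped (since $\z[k-1][j]=0$), giving $\z[k][j]-\z[k-1][j]=-\omega\etak$, and hence
\[
\y[k][j]=\x[k][j]-\tfrac{\omega}{\L}=-\omega\bigl(1-\tfrac{1}{3\L}+\tfrac{1}{\L}\bigr)=-\omega\bigl(1+\tfrac{2}{3\L}\bigr)<-\omega.
\]
So from your hypothesis alone the step can exit $\B$. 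Your sentence about ``comfortable margin below the inductive lower bound $\x[k+1][j]\ge -\omega$'' is the crux of the confusion: $-\omega$ \emph{is} the boundary, there is no margin, and nothing in the barrier forces $\truncgrad[j]$ to be small when $x_j$ is near $-\omega$ (the barrier is driven by the other coordinates). Your upper-bound sketch is also not quite right as stated --- the gradient need not be nonnegative on any slab $\{x_j\in[-c/\L,0]\}$ --- though a corrected version using $|\nabla_j\fr(x)|\le 1-\exp\bigl(x_j(1+\beta)/\beta\bigr)\le -x_j(1+\beta)/\beta$ together with $\L\ge 4\omega(1+\beta)/\beta$ does go through for that direction.

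The paper avoids the gradient entirely and proves a purely algebraic invariant: for every $k$, the point $\y[k]$ is a \emph{convex combination} of $\z[0],\dots,\z[k]$, with the weight on $\z[k]$ equal to $1/(\etak\L)$. This is verified by unfolding $\y[k]=\tau\z[k-1]+(1-\tau)\y[k-1]+\tfrac{1}{\etak\L}(\z[k]-\z[k-1])$ and checking that the coefficient of $\z[k-1]$ stays nonnegative via the recursion $\etak=(1-\tau)^{-1}\etak[k-1]$. Since each $\z[i]\in\B$ by construction of the mirror step, the conclusion is immediate. This stronger invariant is exactly what rules out the configuration in the counterexample above: if $\z[k-1][j]=0$ has positive weight in $\y[k-1]$, then $\y[k-1][j]$ cannot equal $-\omega$.
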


    It is important to note that we do not use the gradient $\nabla \fr(x)$ for our mirror descent loss. Instead, we use a truncation of the gradient. More precisely, the loss we perform the mirror descent step on is the truncated gradient $\newtarget{def:truncgrad}{\truncgrad[ ]}(\x) \in \R^{\n}$ defined as
    \begin{equation}\label{eq:def_of_truncated_gradient}
        \truncgrad[i](\x) \defi \min\{1, \nabla_{i} \fr(\x)\} \text{ for all }i\in[\n].
    \end{equation}
    Note that $\truncgrad[ ](\x) \in [-1, 1]^{\n}$ because $\nabla \fr(x) \in [-1, \infty]^{\n}$ for any $x \in \R^{\n}$, as the regularizer has positive gradient; see also definition of $\fr(x)$ and its gradient. The truncation allows mirror descent to control one part of the regret, which will not depend on the global Lipschitz constant. Gradient descent will compensate for both such regret and the part that is not controlled by mirror descent.

Let $\Pi_{\X}(\cdot)$ be the $\norm{\cdot}_2$-projection map of a point onto a convex set $\X$. The mirror descent update can be written in closed form as any of the two following equivalent ways
\begin{align} \label{eq:closed_form_mirror_descent_in_the_box}
       \begin{aligned}
       \z[] &\gets \Pi_{\B} (\z[k-1] - \omega\etak\truncgrad[ ](\x)),\\
       \z[][i] &\gets \Pi_{[-\omega, 0]} (\z[k-1][i] - \omega\etak\truncgrad[i](\x)), \text{ for all } i\in[\n].
       \end{aligned}
    \end{align}
That is, projecting back to the box, in case of the $\norm{\cdot}_2$, consists of simply clipping each coordinate. We bound the regret coming from this mirror descent step by modifying the classical analysis of mirror descent, cf. \cref{lemma:mirror_descent_lemma}.

\begin{lemma}[Mirror Descent Guarantee]\label{lemma:mirror_descent_guarantee}
    Let $u \in \B$ and choose $\L$ as in \cref{alg:1_fair_packing}. We have:
    \[
        \innp{\etak\truncgrad[ ](\x), \z[k-1]-u} \leq \etak[k][2] \L \innp{\truncgrad[ ](\x), \x-\y} + \frac{1}{2\omega} \norm{\z[k-1]-u}_2^2 - \frac{1}{2\omega}\norm{\z-u}_2^2.
    \]
\end{lemma}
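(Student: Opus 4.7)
The plan is to follow the template of the classical mirror-descent regret bound for the Euclidean regularizer $\psi(x) = \tfrac{1}{2\omega}\|x\|_2^2$, whose Bregman divergence is $\breg[\psi](x,y) = \tfrac{1}{2\omega}\|x-y\|_2^2$ and which is $(1/\omega)$-strongly convex with respect to $\|\cdot\|_2$. Once this template yields the standard regret plus a term proportional to $\|\truncgrad[ ](\x)\|_2^2$, I would trade the squared-norm term for the linear quantity $\innp{\truncgrad[ ](\x), \x - \y}$ by exploiting the algorithmic definition of $\y$ as a gradient-descent step.

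Concretely, I would start from the first-order optimality of $\z$ as the minimizer of the proximal subproblem in \eqref{eq:closed_form_mirror_descent_in_the_box} and apply the three-point identity for Bregman divergences to obtain, for any $u\in\B$,
\[
\etak\innp{\truncgrad[ ](\x), \z - u} \leq \breg[\psi](u, \z[k-1]) - \breg[\psi](u, \z) - \breg[\psi](\z, \z[k-1]).
\]
Writing $\innp{\truncgrad[ ](\x), \z[k-1] - u} = \innp{\truncgrad[ ](\x), \z[k-1] - \z} + \innp{\truncgrad[ ](\x), \z - u}$, bounding the new summand by Young's inequality with constant tuned to the $(1/\omega)$-strong convexity of $\psi$, and canceling the leftover negative Bregman term yields
\[
\etak\innp{\truncgrad[ ](\x), \z[k-1] - u} \leq \tfrac{\omega\, \etak[k][2]}{2}\|\truncgrad[ ](\x)\|_2^2 + \tfrac{1}{2\omega}\|\z[k-1] - u\|_2^2 - \tfrac{1}{2\omega}\|\z - u\|_2^2.
\]

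What remains is to show the coordinate-wise comparison $\tfrac{\omega}{2}\|\truncgrad[ ](\x)\|_2^2 \leq \L\innp{\truncgrad[ ](\x), \x - \y}$. Here I plan to use the algorithm's definition of $\y$ as a coordinate-wise projected gradient step from $\x$ along $\truncgrad[ ](\x)$ with a step size calibrated through $\L$ and $\omega$: on unclipped coordinates $i$ the quantity $\truncgrad[i](\x)(\x[k][i] - \y[k][i])$ is, by construction of the step, proportional to $\truncgrad[i](\x)^2$ with the right constant; on coordinates where the projection onto $\B$ engages, the bound $|\truncgrad[i](\x)| \leq 1$ --- which holds precisely thanks to the truncation \eqref{eq:def_of_truncated_gradient} --- combined with the geometry of $\B = [-\omega,0]^{\n}$ and the sign of the truncated gradient at the active face still keeps the per-coordinate inequality. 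Summing over $i$ finishes the proof.

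The main obstacle is this last per-coordinate analysis: had we used the untruncated $\nabla\fr$ instead of $\truncgrad[ ]$ in the mirror step, a single coordinate of the gradient could be arbitrarily large while the projection-induced step length stays bounded by $\omega$, and the desired inequality would fail. The choice of the truncated gradient in the mirror step is exactly what makes the trade from $\|\truncgrad[ ](\x)\|_2^2$ to $\L\innp{\truncgrad[ ](\x), \x-\y}$ possible; everything else in the argument is a rearrangement of the classical Euclidean mirror-descent regret lemma.
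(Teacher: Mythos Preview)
There is a genuine gap in the last step. The per-coordinate inequality $\tfrac{\omega}{2}\,\truncgrad[i](\x)^2 \le \L\,\truncgrad[i](\x)\,(\x[k][i]-\y[k][i])$ can fail whenever the projection onto $\B$ clips. First note that $\y$ is \emph{not} a projected step from $\x$: by the algorithm, $\x-\y=\tfrac{1}{\etak\L}(\z[k-1]-\z)$ is an exact identity, and the projection lives only in the update for $\z$. Now take a coordinate $i$ with $\z[k-1][i]=-\omega$ and $\truncgrad[i](\x)=1$; the clipped mirror update keeps $\z[k][i]=-\omega$, so $\x[k][i]-\y[k][i]=0$, while $\truncgrad[i](\x)^2=1$, and your inequality reads $\tfrac{\omega}{2}\le 0$. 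The bound $|\truncgrad[i](\x)|\le 1$ does not help: what breaks is that the realised displacement $\z[k-1][i]-\z[k][i]$ can be arbitrarily smaller than the unprojected step $\omega\etak\,\truncgrad[i](\x)$. More generally, any $\z[k-1][i]\in(-\omega,-\omega+\tfrac{\omega\etak}{2})$ with $\truncgrad[i](\x)=1$ already violates the inequality.

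The paper avoids the detour through $\|\truncgrad[ ](\x)\|_2^2$ altogether. Rather than applying Young's inequality to the pair $\etak\innp{\truncgrad[ ](\x), \z[k-1]-\z}$ and $-\breg(\z,\z[k-1])$, it simply drops the nonpositive term $-\breg(\z,\z[k-1])$ (this is part~(b) of \cref{lemma:mirror_descent_lemma}), keeping the linear term $\etak\innp{\truncgrad[ ](\x), \z[k-1]-\z}$ intact. Then the identity $\z[k-1]-\z=\etak\L(\x-\y)$ from the definition of $\y$ turns this directly into $\etak[k][2]\L\,\innp{\truncgrad[ ](\x), \x-\y}$, with equality, and the lemma follows in one line with no case analysis. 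Your argument is easily fixed the same way: stop after your three-point inequality, drop $-\breg(\z,\z[k-1])\le 0$, and substitute the identity.
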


\begin{proof}\label{proof:mirror_descent_guarantee}
    Use \cref{lemma:mirror_descent_lemma}.\ref{lemma:enum:mirror_descent_non_standard_lemma} below with loss $\lossk = \truncgrad[ ](\x)$, learning rate $\eta = \etak$, and regularizer $\psi(x) = \frac{1}{2\omega}\norm{x}_2^2$, that yields Bregman divergence $\breg(x, y) = \frac{1}{2\omega}\norm{x-y}_2^2$. Use that $\z[k-1]-\z = \etak\L(\x-\y)$.
\end{proof}

Next, we will analyze the role of the gradient descent step. We show in the following lemma a lower bound on the progress of our descent step. Note that this progress could not be greater than $\innp{\nabla \fr(\x), \x-\y}$, by convexity, so this is a strong descent condition. In the proof, we will use \cref{lemma:fair_packing_descent_lemma}, which is a crucial generalization of \citep[Lemma 3.1]{diakonikolas2020fair}.

\begin{lemma}[Descent Lemma]\label{lemma:descent_lemma_coord_descent} 
    Given $\x$ and $\y$ as defined in \cref{alg:1_fair_packing}, the following holds:
    \[
    \fr(\x) -\fr(\y) \geq \frac{1}{2}\innp{\nabla \fr(\x), \x-\y} \geq 0.
    \]
\end{lemma}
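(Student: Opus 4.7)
The plan is as follows.

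\textbf{Step 1: Characterize the descent direction $\x - \y$.} From the coupling identity $\z[k-1] - \z = \etak \L (\x - \y)$ used in the proof of \cref{lemma:mirror_descent_guarantee}, combined with the coordinate-wise mirror update \eqref{eq:closed_form_mirror_descent_in_the_box}, I obtain $(\x - \y)_i = \frac{1}{\etak \L}\bigl((\z[k-1])_i - (\z)_i\bigr)$ for every $i$. Projecting each coordinate of $\z[k-1] - \omega\etak\truncgrad[ ](\x)$ onto $[-\omega, 0]$ can only shrink (never flip the sign of) the raw step $-\omega\etak\truncgrad[i](\x)$, so $(\x - \y)_i$ shares the sign of $\truncgrad[i](\x)$ and satisfies $|(\x - \y)_i| \leq \frac{\omega}{\L}|\truncgrad[i](\x)| \leq \frac{\omega}{\L}$. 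Since the regularizer contributes only non-negative terms to the gradient we have $\nabla_i \fr(\x) \geq -1$, and $\truncgrad[i](\x) = \min\{1, \nabla_i \fr(\x)\}$ preserves the sign of any value $\geq -1$; hence $\nabla_i \fr(\x)\cdot(\x - \y)_i \geq 0$ for every $i$. This already delivers the second inequality $\innp{\nabla \fr(\x), \x - \y} \geq 0$.

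\textbf{Step 2: Bound the decrease.} Split $\fr(x) = -\innp{\ones_{\n}, x} + \frac{\beta}{1+\beta}\sum_{i=1}^{\m}(A\exp(x))_i^{(1+\beta)/\beta}$ into its linear and barrier parts. The linear part contributes exactly $\innp{\ones_{\n}, \x - \y}$ to $\fr(\x) - \fr(\y)$. For the barrier, I would invoke \cref{lemma:fair_packing_descent_lemma}, the generalization of \citep[Lemma 3.1]{diakonikolas2020fair} announced in the statement, which is tailored to sums of exponentials raised to the power $(1+\beta)/\beta$. Applied to the step from $\x$ to $\y$ of sup-norm at most $\omega/\L$, it gives a bound of the form $(\text{barrier at }\y) \leq (\text{barrier at }\x) + \innp{\text{barrier gradient at }\x,\, \y - \x} + E$, where the second-order error $E$ is dominated, coordinate-wise, by the corresponding first-order decrease. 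The value of $\L$ prescribed in \cref{alg:1_fair_packing} is precisely the threshold at which the combined error is at most $\tfrac12 \innp{\nabla \fr(\x), \x - \y}$; adding back the linear part then yields $\fr(\x) - \fr(\y) \geq \tfrac12 \innp{\nabla \fr(\x), \x - \y}$, the desired descent inequality.

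\textbf{Main obstacle.} The quantitative heart lies in \cref{lemma:fair_packing_descent_lemma}. A direct smoothness argument is unavailable: the Hessian of the barrier scales like $e^{O(\omega)}$, which is polynomial in $\rho$ and hence incompatible with a width-independent rate. The generalization must exploit the sum-of-exponentials structure so that, coordinate-wise, a large positive entry of the barrier gradient signals a \emph{steep} barrier, and a bounded step of size $\omega/\L$ there produces an actual decrease already proportional to that large entry — just enough to absorb the matching second-order term. Calibrating $\L$ so that this absorption leaves exactly the factor $\tfrac12$ in the statement, while keeping $\L$ only polylogarithmic in the problem parameters, is the crux of the argument; everything else is straightforward bookkeeping on top of this inequality.
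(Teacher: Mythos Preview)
Your proposal is correct and follows the same route as the paper: coordinate-wise sign agreement from the mirror update gives the non-negativity, and then \cref{lemma:fair_packing_descent_lemma} applied with the bound $|(\x-\y)_i| \leq \frac{\omega}{\L}\,|\truncgrad[i](\x)|$ together with the calibration $\L \geq 4\omega(1+\beta)/\beta$ produces the factor $\tfrac12$. The only difference is cosmetic: you split off the linear part of $\fr$ before invoking the lemma, whereas the paper applies \cref{lemma:fair_packing_descent_lemma} directly to the full $\fr$ (the lemma is stated for $\fr$ itself, and since the linear part has zero Hessian it contributes exactly its first-order term anyway, so the split is harmless but unnecessary).
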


\begin{proof}
    We have $\x-\y = (\z[k-1]-\z)/\etak\L$ by definition of the gradient descent step. With this, we first conclude that  $\frac{1}{2}\innp{\nabla \fr(\x), \x-\y}\geq 0$, as $\nabla_{i} \fr(\x)$ and $\x[k][i]-\y[k][i]$ have the same sign for all $i\in[\n]$, cf. \eqref{eq:closed_form_mirror_descent_in_the_box}.

    We apply \cref{lemma:fair_packing_descent_lemma} with $\y$ corresponding to $x+\Delta$ and $\x$ corresponding to $x$. To this end, we choose $c_{i} \geq 0$ satisfying $\circled{1}$ below
    \[
        \frac{c_{i}\beta}{4(1+\beta)}\abs{\truncgrad[i](\x)} \circled{1}[=] \abs{\x[k][i]-\y[k][i]} \circled{2}[=] \frac{1}{\etak\L}\abs{\z[k-1][i]-\z[k][i]} \circled{3}[\leq] \frac{\omega}{\L}\abs{\truncgrad[i](\x)},
    \]
    where $\circled{2}$ holds by definition of $\y$ and $\circled{3}$ holds by the mirror descent update \eqref{eq:closed_form_mirror_descent_in_the_box}. Thus, it suffices to pick $c_{i}$ such that $c_{i} \leq \frac{4\omega(1+\beta)}{\beta \L} \leq 1$, where the last inequality holds true by the definition of $\L$. In fact, the value of $\L$ was chosen to satisfy the previous inequality. Hence, \cref{lemma:fair_packing_descent_lemma} can be applied. We obtain:
    \[
        \fr(\x) -\fr(\y) \geq \sum_{i=1}^{\n} \left(1-\frac{c_i}{2}\right)\nabla_i \fr(\x) (\x[k][i]-\y[k][i]) \geq \frac{1}{2}\innp{\nabla \fr(\x), \x-\y}.
    \]
    as desired.
\end{proof}

\begin{algorithm}
    \caption{Accelerated descent method for $1$-Fair Packing} 
    \label{alg:1_fair_packing}
\begin{algorithmic}[1]
    \REQUIRE Matrix $A \in \mathcal{M}_{\m\times \n}(\Rp)$ normalized as in \eqref{eq:normalization_of_A}. Accuracy $\epsilon \in (0, \n/2]$.
    \State $\beta \gets \frac{\epsilon}{6\n \log(2\m\n^2/\epsilon)}$; $\omega \gets \log(\frac{\m\n}{1-\epsilon/\n})$; $\newtarget{def:L}{\L} = \max\left\{\frac{4\omega(1+\beta)}{\beta}, \frac{16\n\log(2\m\n)}{3\epsilon} + \frac{1}{3}\right\} = \bigotilde{\n/\epsilon}$
    \State $\etak[0] \gets \frac{1}{3\L}$; $\Ck = 3\etak\L$; $\tau \gets \tau_k = \etak /\Ck = \frac{1}{3\L}$.
    \State $T \gets \lceil \log(\frac{4\n\log(2\m\n)}{\epsilon})/\log(\frac{1}{1-\tau}) \rceil \leq \lceil3\L\log(\frac{4\n\log(2\m\n)}{\epsilon})\rceil = \bigotilde{\n/\epsilon}$
    \State $\x[0] \gets \y[0] \gets \z[0] \gets -\omega \ones_{\n}$
    \vspace{0.1cm}
    \hrule
    \vspace{0.1cm}
    \FOR {$k = 1 \text{ \textbf{to} } T$}
        \State $\newtarget{def:eta_k}{\etak} \gets \Ck - \Ck[k-1] = \frac{1}{1-\tau}\etak[k-1]$
        \State $\newtarget{def:xk_algorithm}{\x} \gets \tau\z[k-1] + (1-\tau) \y[k-1]$
        \State $\newtarget{def:zk_algorithm}{\z} \gets \argmin_{z\in \B}\left\{ \frac{1}{2\omega}\norm{z-\z[k-1]}_2^2 + \innp{\etak\truncgrad[ ](\x), z}\right\}$ \Comment{Mirror descent step}
        \State \label{line:gradStep}$\newtarget{def:yk_algorithm}{\y} \gets \x + \frac{1}{\etak\L}(\z-\z[k-1]) $\Comment{Gradient descent step}
    \ENDFOR
    \State \textbf{return} $\bar{x} \defi \exp(\y[T])/(1+\epsilon/\n)$
    \ENSURE $\f(\bar{x} ) - \f(\xast) \leq \epsilon$ and $\bar{x} $ is feasible, i.e., $A\bar{x}  \leq 1$. The total number of iterations is $\bigotilde{\n/\epsilon}$ to obtain an $\bigo{\epsilon}$-approximate solution.
\end{algorithmic}
\end{algorithm}

\subsection{Coupling Mirror Descent and Gradient Descent}

We first prove a lemma that shows we can compensate for the regret coming from mirror descent as well as for the rest of the regret. Note the total weighted instantaneous regret $\innp{\etak\nabla \f(\x), \z[k-1]-u}$ is bounded by the left hand side of \eqref{eq:compensating_for_all_of_the_regret} up to a difference of potential functions, by \cref{lemma:mirror_descent_guarantee}. This is a critical part of the analysis: using the truncated gradient for mirror descent makes its corresponding regret not to depend on the smoothness constant, but there is a remaining regret that, crucially, can be compensated by our strong descent condition.

\begin{lemma}\label{lemma:gradient_descent_compensates_packing_regret}\linktoproof{lemma:gradient_descent_compensates_packing_regret}
    Let $\newtarget{def:Ck}{\Ck} \defi 3\etak \L$, and let $\newtarget{def:remainder_trunc_coord_grad}{\nuk} \defi \nabla \fr(\x)-\truncgrad[ ](\x)\in [0,\infty)^{\n}$. For all $u \in \B$, we have
    \begin{equation}\label{eq:compensating_for_all_of_the_regret}
        \innp{\etak\nuk, \z[k-1]-u} + \etak[k][2] \L \innp{\truncgrad[ ](\x), \x-\y} \leq \Ck (\fr(\x)-\fr(\y)).
    \end{equation}
\end{lemma}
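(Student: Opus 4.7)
The plan is to reduce the claim, via the Descent Lemma \cref{lemma:descent_lemma_coord_descent}, to a coordinate-wise inequality that can be verified by a case analysis on whether the truncation $\nuk[k][i]$ is active and whether the mirror descent step is clipped against the box $\B$.

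First I would invoke the Descent Lemma, $\fr(\x) - \fr(\y) \geq \tfrac{1}{2}\innp{\nabla \fr(\x), \x-\y}$, together with the identity $\etak \L (\x - \y) = \z[k-1] - \z$ coming from the gradient step in \cref{alg:1_fair_packing}. This gives $\Ck(\fr(\x) - \fr(\y)) \geq \tfrac{3}{2}\innp{\nabla \fr(\x), \z[k-1] - \z}$. Expanding $\nabla \fr = \truncgrad[ ](\x) + \nuk$ and rearranging, the claim reduces to showing
\begin{equation*}
    \etak \innp{\nuk, \z[k-1] - u} \leq \bigl(\tfrac{3}{2} - \etak\bigr) \innp{\truncgrad[ ](\x), \z[k-1] - \z} + \tfrac{3}{2}\innp{\nuk, \z[k-1] - \z}.
\end{equation*}
Assuming $\etak \leq 3/2$, both right-hand terms are non-negative coordinate-by-coordinate: the first because the sign-agreement between $\truncgrad[i](\x)$ and $\x[k][i] - \y[k][i]$ used in the proof of \cref{lemma:descent_lemma_coord_descent} transfers to $\z[k-1][i] - \z[k][i]$; the second because $\nuk[k][i] > 0$ forces $\truncgrad[i](\x) = 1$, which in turn makes $\z[k-1][i] - \z[k][i] \geq 0$.

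I may therefore discard the first right-hand term and restrict attention to indices $i$ with $\nuk[k][i] > 0$ (and, by discarding negative summands on the left, to those additionally satisfying $\z[k-1][i] \geq u_i$). The problem reduces to the coordinate-wise bound $\etak(\z[k-1][i] - u_i) \leq \tfrac{3}{2}(\z[k-1][i] - \z[k][i])$. For such $i$ we have $\truncgrad[i](\x) = 1$, so the mirror descent update \eqref{eq:closed_form_mirror_descent_in_the_box} reads $\z[k][i] = \max\{\z[k-1][i] - \omega \etak,\, -\omega\}$, hence $\z[k-1][i] - \z[k][i] = \min\{\omega \etak,\, \z[k-1][i] + \omega\}$. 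I would split into two sub-cases. In the unclipped sub-case, $\z[k-1][i] - \z[k][i] = \omega \etak$ and the bound reduces to $\z[k-1][i] - u_i \leq \tfrac{3\omega}{2}$, which is immediate from $\z[k-1][i], u_i \in [-\omega, 0]$. In the clipped sub-case, $\z[k-1][i] - \z[k][i] = \z[k-1][i] + \omega$, and since $u_i \geq -\omega$ we also have $\z[k-1][i] - u_i \leq \z[k-1][i] + \omega$, so the bound follows from $\etak \leq 3/2$.

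The main obstacle is the clipped sub-case: the mirror step can then be much shorter than $\omega \etak$, and to compensate on the right-hand side one needs the crucial observation that $u \in \B$ forces $\z[k-1][i] - u_i$ to shrink by the exact same amount. A subsidiary point is verifying $\etak \leq 3/2$ throughout the algorithm; a short calculation using $\etak[0] = 1/(3\L)$, $\tau = 1/(3\L)$, the definition of $T$, and the lower bound $\L \geq \frac{16\n\log(2\m\n)}{3\epsilon} + \tfrac{1}{3}$ shows that in fact $\etak \leq \tfrac{1}{4(1-\tau)} \leq \tfrac{1}{3}$ for every $k \leq T$, which is comfortably below the required threshold.
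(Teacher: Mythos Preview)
Your proof is correct and follows essentially the same approach as the paper: reduce via the Descent Lemma to a coordinate-wise inequality, then do a case analysis on whether the truncation is active ($\nuk[k][i]=0$ versus $\nuk[k][i]>0$) and whether the mirror step is clipped at the lower boundary of $\B$. The paper organizes the same three cases directly in the $\x-\y$ coordinates and keeps the full right-hand side $\tfrac{3}{2}\etak\L\,\nabla_i\fr(\x)(\x[k][i]-\y[k][i])$, whereas you first pass to $\z[k-1]-\z$ and discard the $\truncgrad[ ]$ term; this is a cosmetic simplification, not a different idea. Your bound $\etak\le 1/3$ matches the paper's $\etak\le 1/4$ (both derived from the choice of $T$ and the lower bound on $\L$), and either suffices for all cases.
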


With these tools at hand, we can now use a linear coupling argument to establish an accelerated convergence rate. Note that the algorithm takes the simple form of iterating a mirror descent step, gradient descent step and a coupling, after a careful choice of parameters. All of which depend on known quantities.

\begin{theorem}\label{thm:primal_guarantee}
    Let $\epsilon \leq \n/2$ and let $\xast$ be the solution to \eqref{eq:primal_problem} and let $\xrast$ be the minimizer of \eqref{eq:regularized_reparametrized_1_fair_packing}. \cref{alg:1_fair_packing} computes a point $\y[T]\in\B$ such that $\fr(\y[T])-\fr(\xrast) \leq \epsilon$ in a number of iterations $T =\bigotilde{\n/\epsilon}$. Besides, $\bar{x} \defi \exp(\y[T])/(1+\epsilon/\n)$ is a feasible point of \eqref{eq:primal_problem}, i.e., $A\bar{x} \leq \ones_{\m}$, such that $\f(\xast)-\f(\bar{x}) \leq 5\epsilon = O(\epsilon)$.
\end{theorem}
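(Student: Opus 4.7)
The plan is a linear coupling argument that combines the three preceding lemmas to yield a geometric rate of convergence for $\fr$, followed by an application of \cref{prop:optimizing_the_smoothened_function_is_enough} to translate the bound back to $\f$. Fix any comparator $u\in\B$ and start from convexity of $\fr$, splitting at $\z[k-1]$:
\[
\etak\bigl(\fr(\x)-\fr(u)\bigr) \leq \innp{\etak\nabla\fr(\x),\, \x-\z[k-1]} + \innp{\etak\nabla\fr(\x),\, \z[k-1]-u}.
\]
For the first summand, the coupling $\x = \tau\z[k-1]+(1-\tau)\y[k-1]$ gives the algebraic identity $\x-\z[k-1] = \tfrac{1-\tau}{\tau}(\y[k-1]-\x)$, and combined with convexity and the parameter identity $\etak(1-\tau)/\tau = \Ck[k-1]$ it yields $\innp{\etak\nabla\fr(\x),\x-\z[k-1]} \leq \Ck[k-1](\fr(\y[k-1])-\fr(\x))$. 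For the second summand, decompose $\nabla\fr(\x)=\truncgrad[ ](\x)+\nuk$ and add \cref{lemma:mirror_descent_guarantee} applied to $\truncgrad[ ](\x)$ to \cref{lemma:gradient_descent_compensates_packing_regret}; the term $\etak[k][2]\L\innp{\truncgrad[ ](\x),\x-\y}$ appears on both sides with opposite signs and cancels, leaving the bound $\Ck(\fr(\x)-\fr(\y)) + \tfrac{1}{2\omega}(\norm{\z[k-1]-u}_2^2-\norm{\z-u}_2^2)$.

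Plugging everything back and using $\etak = \Ck-\Ck[k-1]$, the $\fr(\x)$ terms cancel and one arrives at the per-step inequality
\[
\Ck\fr(\y) - \Ck[k-1]\fr(\y[k-1]) - \etak\fr(u) \leq \tfrac{1}{2\omega}\bigl(\norm{\z[k-1]-u}_2^2 - \norm{\z-u}_2^2\bigr).
\]
Summing over $k=1,\ldots,T$ telescopes both sides (recall $\sum_k\etak = \Ck[T]-\Ck[0]$), and choosing $u=\xrast$ yields
\[
\Ck[T]\bigl(\fr(\y[T])-\fr(\xrast)\bigr) \leq \Ck[0]\bigl(\fr(\y[0])-\fr(\xrast)\bigr) + \tfrac{1}{2\omega}\norm{\z[0]-\xrast}_2^2.
\]

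I would then substitute the parameter choices from \cref{alg:1_fair_packing}: $\Ck[0]=3\etak[0]\L=1$ and $\Ck[T]=(1-\tau)^{-T}$; the diameter bound $\norm{\z[0]-\xrast}_2^2\leq \n\omega^2$ since both points lie in the box $\B=[-\omega,0]^{\n}$; and the initial gap estimate $\fr(\y[0])-\fr(\xrast) \leq \n\omega + O(1)$, which follows because at $\y[0]=-\omega\ones_{\n}$ one has $(A\exp(\y[0]))_i \leq \n e^{-\omega} \leq 1/\m$, making the barrier term negligible, while $\fr(\xrast)\geq 0$. The choice of $T$ guarantees $\Ck[T] \geq 4\n\log(2\m\n)/\epsilon$, so after dividing through by $\Ck[T]$ both remaining contributions are $O(\epsilon)$ and the constants in $\L,\omega,T$ are calibrated so that the sum is at most $\epsilon$. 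Since $\L=\bigotilde{\n/\epsilon}$ and $\log(1/(1-\tau))\geq \tau=1/(3\L)$, the iteration count is $T=\bigotilde{\n/\epsilon}$. Finally, $\y[T]\in\B$ by \cref{lemma:iterates_remain_in_B}, so \cref{prop:optimizing_the_smoothened_function_is_enough} applied with $x_r^{\epsilon}=\y[T]$ gives at once that $\bar{x}=\exp(\y[T])/(1+\epsilon/\n)$ satisfies $A\bar{x}\leq\ones_{\m}$ and $\f(\xast)-\f(\bar{x})\leq 5\epsilon$, completing the theorem.

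The main obstacle I anticipate is the algebraic bookkeeping in the cancellation step: one must simultaneously exploit $\etak(1-\tau)/\tau=\Ck[k-1]$, $\etak=\Ck-\Ck[k-1]$, and verify that the $\etak[k][2]\L\innp{\truncgrad[ ](\x),\x-\y}$ terms really have matching signs in \cref{lemma:mirror_descent_guarantee} and \cref{lemma:gradient_descent_compensates_packing_regret} so they vanish. A secondary subtlety is the calibration of $\L$, $\omega$, and $T$ so that the geometric contraction factor $\Ck[T]^{-1}=(1-\tau)^T$ dominates both $\Ck[0](\fr(\y[0])-\fr(\xrast))$ and $\n\omega^2/(2\omega)$ at the $\epsilon$ level rather than a larger multiple.
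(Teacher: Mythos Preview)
Your proposal is correct and follows essentially the same linear-coupling argument as the paper: convexity, the identity $\x-\z[k-1]=\tfrac{1-\tau}{\tau}(\y[k-1]-\x)$, \cref{lemma:mirror_descent_guarantee} and \cref{lemma:gradient_descent_compensates_packing_regret} combined so that the $\etak[k][2]\L\innp{\truncgrad[ ](\x),\x-\y}$ term cancels, then telescoping via $\etak=\Ck-\Ck[k-1]$ and the initial-gap/diameter bounds. The only small item you do not mention explicitly is that the proof of \cref{lemma:gradient_descent_compensates_packing_regret} uses $\etak\leq 1/4$, which the paper verifies a posteriori from the choice of $\L$ and the minimality of $T$; this falls under your ``secondary subtlety'' about calibrating $\L$ and $T$.
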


\begin{proof} We start by bounding the gap with respect to $\x$:
\begingroup
\allowdisplaybreaks
    \begin{align} \label{eq:coupling_inequalities_fair_packing}
       \begin{aligned}
           &\etak(\fr(\x)-\fr(u)) \\
           &\circled{1}[\leq] \innp{\etak \nabla \fr(\x), \x-u} \\
           &= \innp{\etak \nabla \fr(\x), \x-\z[k-1]} + \innp{\etak\nuk, \z[k-1]-u} + \innp{\etak\truncgrad[ ](\x), \z[k-1]-u} \\
           & \circled{2}[=] \frac{(1-\tau)\etak}{\tau} \innp{\nabla \fr(\x), \y[k-1] - \x)} + \innp{\etak\nuk, \z[k-1]-u} + \innp{\etak\truncgrad[ ](\x), \z[k-1]-u} \\
           & \circled{3}[\leq] \frac{(1-\tau)\etak}{\tau} (\fr(\y[k-1])-\fr(\x)) + \innp{\etak\nuk, \z[k-1]-u} + \innp{\etak[k][2]\L\truncgrad[ ](\x), \x-\y} \\ 
           &\quad \quad + \frac{1}{2\omega}\norm{\z[k-1]-u}_2^2 - \frac{1}{2\omega}\norm{\z-u}_2^2] \\
           & \circled{4}[\leq] \frac{(1-\tau)\etak}{\tau} (\fr(\y[k-1])-\fr(\x)) + \Ck(\fr(\x)-\fr(\y)) + \frac{1}{2\omega}\norm{\z[k-1]-u}_2^2 \\ 
           &\quad \quad  - \frac{1}{2\omega}\norm{\z-u}_2^2 \\
           & \circled{5}[\leq] \etak \fr(\x) + (\Ck-\etak)\fr(\y[k-1]) -\Ck\fr({\y}) + \frac{1}{2\omega}\norm{\z[k-1]-u}_2^2 - \frac{1}{2\omega}\norm{\z-u}_2^2 \\
       \end{aligned}
    \end{align}
\endgroup
    We used convexity in $\circled{1}$. The definition of $\x$ is used in $\circled{2}$. Inequality $\circled{3}$ uses convexity and \cref{lemma:mirror_descent_guarantee}. We applied \cref{lemma:gradient_descent_compensates_packing_regret} in $\circled{4}$. In $\circled{5}$, we substituted the value of $\tau$, which is picked to be $\newtarget{def:tau}{\tau} \defi \etak/\Ck = \frac{1}{3\L} $ so we can cancel $\fr(\x)$ in both sides of \eqref{eq:coupling_inequalities_fair_packing}.

The choice of $\etak$ is made so that $\Ck-\etak = \Ck[k-1]$ (or equiv. $(3\L-1)\etak=3\L\etak[k-1]$), which allows to telescope the previous expression. Adding up \eqref{eq:coupling_inequalities_fair_packing} for $k=1,\dots,T$ with $u = \xrast$, we have
\begin{align*}
   \begin{aligned}
       \left(-\Ck[0]-\sum_{k=1}^T\etak \right) \fr(\xrast) &\leq \Ck[0] (\fr(\y[0])-\fr(\xrast)) - \Ck[T] \fr(\y[T]) + \frac{1}{2\omega}\norm{\z[0]-\xrast}_2^2. \\
   \end{aligned}
\end{align*}
    We dropped $-\frac{1}{2\omega}\norm{\z[T]-\xrast}_2^2 \leq 0$. Now, since $\etak = \Ck - \Ck[k-1]$ we have $-\Ck[0]-\sum_{k=1}^T\etak = -\Ck[T]$. So reorganizing terms we obtain
\begin{align} \label{eq:final_convergence_inequality_1_fair}
   \begin{aligned}
       \fr(\y[T]) &\leq  \fr(\xrast) + \frac{1}{\Ck[T]}\left( \Ck[0] (\fr(\y[0])-\fr(\xrast))   + \frac{1}{2\omega}\norm{\z[0]-\xrast}_2^2 \right) \\
       &\circled{1}[\leq] \fr(\xrast) + \frac{1}{\Ck[T]}\left( \Ck[0] (\n(\log(2\m\n) + 1)  + \frac{\n\log(\m\n)}{2} \right) \\
       &\circled{2}[\leq] \fr(\xrast) + \epsilon
   \end{aligned}
\end{align}
    Above, $\circled{1}$ uses $\fr(\y[0]) \leq \n\log(\m\n/(1-\epsilon/\n)) + \epsilon \leq \n(\log(2\m\n) + 1)$ and $-\fr(\xrast) \leq 0$. For the former, take into account that $-\log(\m\n)\ones_{\n}$ is feasible and so the regularizer at $\y[0] = -\log(\m\n/(1-\epsilon/\n)) \ones_{\n}$ is at most $\epsilon$, cf. \eqref{eq:barrier_is_low_inside}.  Recall $\epsilon < \n/2$. We also bounded the last summand by using that $\z[0], \xrast \in \B$ so $\norm{\z[0]-\xrast}_2^2 \leq \n\omega^2$.

    At this point, the only free parameters left are $\Ck[0]$ (via $\etak[0]$) and $T$. We set $\etak[0] = \frac{1}{3\L}$ so that $\Ck[0]=1$. And we have that $\Ck[T] = 3\L\etak[T] = 3\L\etak[0](1-\tau)^{-T} =  (1-\tau)^{-T}$. So if we pick $T$ such that
\begin{equation}\label{eq:choosing_T}
\frac{1}{\Ck[T]} = (1-\tau)^{T} \leq \frac{\epsilon}{4\n\log(2\m\n)},
\end{equation}
we will obtain $\circled{2}$. We will pick the smallest $T$ that satisfies \eqref{eq:choosing_T}. That is,
\[
    T = \left\lceil \frac{\log(4\n\log(2\m\n)/\epsilon)}{\log(1/(1-\tau))} \right\rceil \leq \left\lceil3\L\log\left(\frac{4\n\log(2\m\n)}{\epsilon}\right)\right\rceil = \bigotilde{\n/\epsilon}.
\]
    On the other hand, by definition of $T$ as the minimum natural number satisfying \eqref{eq:choosing_T}, we have,
\[
    (1-\tau)^T = \frac{\etak[0]}{\etak[T]} \geq \frac{\epsilon}{4\n\log(2\m\n)}(1-\tau).
\]
    We can use this inequality to show $\etak \leq \frac{1}{4}$, for all $k\in[T]$, which is used in the proof of \cref{lemma:gradient_descent_compensates_packing_regret}. It is enough that $\circled{1}$ below is satisfied:
    \begin{equation}\label{eq:bound_by_one_fourth}
        \etak \leq \etak[T] \leq \frac{4\n\log(2\m\n)\etak[0]}{\epsilon} \frac{1}{1-\tau} = \frac{4\n\log(2\m\n)}{3 \L\epsilon} \frac{3\L}{3\L-1} \circled{1}[\leq] \frac{1}{4}.
\end{equation}
If $\L \geq \frac{16\n\log(2\m\n)}{3\epsilon} + \frac{1}{3}$ then $\circled{1}$ holds, and we chose $\L$ to satisfy this inequality.

    We note we could increase $T$ by a factor $C>1$ inside of its $\log$ in the numerator, so that the error obtained in $\circled{2}$ is $\epsilon/C$. However, the requirement on $\L$ above would increase by a factor of $C$, so we would end up with an extra factor of $C$ in the value of $T$. Also, the reduction caused by the smoothing already incurs in an $\epsilon$ additive error, cf. \cref{prop:optimizing_the_smoothened_function_is_enough}. Part $1$ of \cref{prop:optimizing_the_smoothened_function_is_enough} could use $x =\log(1-\frac{\epsilon}{\n C})\ones_{\n} +\xasthat$ so that inequality \eqref{eq:reduction_to_smooth_part_one} ends up being bounded by $O(\epsilon/C)$, but that would require to have $\beta$ be $C$ times smaller for $\eqref{eq:barrier_is_low_inside}$ to work. This would also require to make $\L$ larger by a factor of $C$ so we would also end up having the $C$ in the total number of iterations to obtain an $O(\epsilon)$-optimizer.

    In conclusion, we obtain an $\epsilon$ minimizer of $\fr$ in $\bigotilde{\n/\epsilon}$ iterations and by \cref{prop:optimizing_the_smoothened_function_is_enough}, we get that $\bar{x}$ is a feasible point that is a $5\epsilon$ optimizer of Problem~\eqref{eq:primal_problem}. Finally, we note that each iteration of the algorithm can be implemented in $O(\N)$ operations, that are distributed, where the bottleneck is the computation of the gradient. From the definition of the gradient, it is clear that it can be computed in our distributed model of computation and that each agent only needs their local variables for the rest of the steps in the algorithm.
\end{proof}

}

{

\renewcommand\D{\newlink{def:D}{\mathcal{D}}}
\newcommand\Dp{\newlink{def:D_plus}{\mathcal{D}^+}}
\renewcommand\P{\newlink{def:P}{\mathcal{P}}}

\newcommand\concat{\newlink{def:concat}{\operatorname{concat}}}

\newcommand\ghat{\newlink{def:dual_proxy}{\hat{g}}}
\renewcommand\c{\newlink{def:centroid}{c}}
\newcommand\tildem{\newlink{def:dimension_of_MW}{\widetilde{m}}}

\newcommand\lambdaast{\newlink{def:dual_optimizer}{\lambda^{\ast}}}

\newcommandx*\It[1][1=t, usedefault]{\newlink{def:I_t}{I_{#1}}}
\newcommand\Igeneric{\newlink{def:I_generic}{I}}

\newcommandx*\AI[2][1=\It, 2= , usedefault]{\newlink{def:A_It}{A_{#1}^{#2}}}

\newcommandx*\epsT[2][1=t, 2= , usedefault]{\newlink{def:epsilon_t}{\oldepsilon_{#1}^{#2}}}

\newcommand\tausub[1]{\newlink{def:width_parameter_tau_sub_delta}{\tau_{#1}}}
\newcommand\sigmasub[1]{\newlink{def:width_parameter_sigma_sub_delta}{\sigma_{#1}}}
\newcommand\tautrue{\newlink{def:width_parameter_true_tau}{\tau}}
\newcommand\sigmatrue{\newlink{def:width_parameter_true_sigma}{\sigma}}

\newcommandx*\lossk[2][1=k, 2= , usedefault]{\newlink{def:losses_dual}{\ell^{(#1)}_{#2}}}

\newcommandx*\h[2][1=\lambda, 2= , usedefault]{\newlink{def:constraint_h}{h^{#1}_{#2}}}
\renewcommandx*\p[2][1=\lambda, 2= , usedefault]{\hyperlink{def:centroid_point_p}{\color{black}p^{#1}_{#2}}}

\newcommandx*\etat[1][1= , usedefault]{\newlink{def:learning_rate_MW}{\eta_{#1}}}

\newcommandx*\Kt[1][1= , usedefault]{\newlink{def:K_number_of_iterations_of_PST_alg}{K_{#1}}}

\newcommandx*\Lambdak[2][1=k, 2= , usedefault]{{\protect\hyperlink{def:weights_of_MW}{\color{black}\Lambda^{(#1)}_{#2}}}}

\newcommandx*\signwidth[2][1=\sigma, 2=\tau, usedefault]{{\protect\hyperlink{def:sign_indicator_for_max_of_width_parameters}{\color{black}\operatorname{sign}(#1, #2)}}}

\newcommandx*\lens[2][1= , 2=\omega\delta, usedefault]{\newlink{def:lens_of_a_point}{\mathcal{L}_{#2}(#1)}}

\renewcommand\v{\newlink{def:aux_point_v_for_oracle}{v}}

\newcommandx*\lambdas[1][1= , usedefault]{\newlink{def:lambda_current_solution}{\lambda^{(s)}_{#1}}}
\newcommandx*\s[1][1= , usedefault]{\newlink{def:current_solution_constraint}{s_{#1}}}
\newcommandx*\ps[1][1= , usedefault]{\c(\s)_{#1}}
\newcommandx*\lambdaq[1][1= , usedefault]{\newlink{def:lambda_query_oracle}{\lambda^{(q)}_{#1}}}
\newcommandx*\q[1][1= , usedefault]{\newlink{def:query_oracle_constraint}{q_{#1}}}
\newcommandx*\pq[1][1= , usedefault]{\c(\q)_{#1}}
\newcommandx*\lambdao[1][1= , usedefault]{\newlink{def:lambda_output_oracle}{\lambda^{(o)}_{#1}}}
\newcommandx*\ho[1][1= , usedefault]{\c^{-1}(o)_{#1}}
\renewcommandx*\o[1][1= , usedefault]{\newlink{def:output_of_oracle}{o_{#1}}}

\section{Optimizing the dual problem} \label{sec:dual}
In this section, we reduce the dual problem~\eqref{eq:dual_problem} to the feasibility problem of a packing \LP{}, which we call the \emph{proxy} problem. Then, we show how we can use the \PST{} framework \citep{plotkin1995fast} to approximately solving the proxy. Our algorithm relies on a carefully built geometric oracle whose \textit{width parameters} can be guaranteed to decrease with the optimality gap. We perform a sequence of restarts, and after each of them we can guarantee lower oracle width. This allows for reducing the overall complexity.

\subsection[The dual 1-fair packing problem as a feasibility packing LP]{The dual 1-fair packing problem as a feasibility packing {\protect\LP{}}}

Let $\newtarget{def:P}{\P} \defi \{x\in\Rp^{\n} : Ax \leq \ones_{\m} \}$ be the feasible region of the primal problem~\eqref{eq:primal_problem}. In this section, we identify (non-negative) covering constraints of the form $\innp{h, x} \leq 1$ with vector $h\in \Rp^{\n}$. Recall we assume without loss of generality that $A$ satisfies \eqref{eq:normalization_of_A}, that is, the maximum entry of each column is $1$. This implies that $\ecanonical[i]\in \P$ for all $i \in [\n]$, and $\P\subseteq [0,1]^{\n}$. For this reason, we also assume in this section and without loss of generality that $A$ contains the constraints $\{\ecanonical\}_{i=1}^{\n}$, i.e., $x_i\leq 1$, for $i\in[\n]$. For convenience, we assume they are the first $\n$ rows of $A$.

Let $\newtarget{def:dual_optimizer}{\lambdaast}\in\simplex{\m}$ be an optimal solution of Problem~\eqref{eq:dual_problem} and let $\h[\lambdaast]\defi A^T\lambdaast\in \Rp^{\n}$.

By strong duality of the Lagrange dual, we can reconstruct the optimal primal solution as $\xast = (1/(\n\h[\lambdaast][1]), \dots, 1/(\n\h[\lambdaast][\n]))$.

This motivates the definition of the \emph{centroid map}:
\[
    \newtarget{def:centroid}{\c}(h) \defi \left(\frac{1}{\n h_1}, \dots, \frac{1}{\n h_{\n}}\right);\ \ \ \ \c^{-1}(x) \defi \left(\frac{1}{\n x_1}, \dots, \frac{1}{\n x_{\n}}\right),
\]
where $h,x\in\Rp^{\n}$  are constraints and points, respectively. Despite the two maps above being the same function we distinguish between $\c$ and $\c^{-1}$ to unambiguously refer to constraints or points.  The name of the centroid map is motivated by the fact that, for any constraint $h \in \Rp^{\n}$, the point $\c(h)$ is the geometric centroid of the simplex $\{x\in\Rp^{\n} : \langle h, x\rangle = 1\}$. Given a $\lambda \in \simplex{\m}$, we define its associated constraint $\newtarget{def:constraint_h}{\h[\lambda]}\defi A^T\lambda$. In addition, we define the centroid $\newtarget{def:centroid_point_p}{\p[\lambda]} \defi \c(A^{T}\lambda) = \c(\h[\lambda])$. Note that we can efficiently compute $\h[\lambda], \p[\lambda]$ from $\lambda$, and $\h[\lambda]$ from $\p[\lambda]$ and viceversa. However, we cannot obtain the coefficients $\lambda$ efficiently from $\h[\lambda]$ or $\p[\lambda]$, as this amounts to solving a linear program.

An important property is that $\h[\lambdaast]$ is the unique constraint of the form $\h[\lambda]$ such that $\c(\h[\lambda])\in \P$, for some $\lambda\in\simplex{\m}$. It is the optimizer because if $\h[\lambdaast]$ has $\c(\h[\lambdaast])\in \P$, then $(\p[\lambdaast], \lambdaast)$ satisfies the optimality conditions. It is unique because of strong convexity of $-\log(\cdot)$; any other dual optimizer $\bar{\lambda}^\ast$ will have $A^T\lambdaast = A^T\bar{\lambda}^{\ast}$. The following lemma shows an approximate version of this property.

\begin{lemma}\label{lemma:the_proxy_is_indeed_a_proxy}
  Let $\h[\lambda]$ for $\lambda\in\simplex{\m}$, such that $A \p[\lambda] \leq (1+\epsilon)\ones_{\m}$. Let $\lambdaast$ be the minimizer of Problem~\eqref{eq:dual_problem}, of objective function $\g$. Then, $\g(\lambda)-\g(\lambdaast) \leq \n \log (1+\epsilon) \leq \n\epsilon$.
\end{lemma}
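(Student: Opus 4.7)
The plan is to exploit the specific form of the centroid map together with (weak/strong) Lagrange duality. The key observation is that the centroid $\p[\lambda]$ is, by design, the point at which $\fhat$ would be maximized along the ray of the constraint $\h[\lambda]$, up to a scaling.

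\medskip

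\emph{Step 1 (feasibility by rescaling).} The hypothesis $A\p[\lambda] \leq (1+\epsilon)\ones_{\m}$ implies that $\bar{x} \defi \p[\lambda]/(1+\epsilon)$ is primal-feasible, i.e., $A\bar{x} \leq \ones_{\m}$ and $\bar{x} \in \Rp^{\n}$. Consequently $\f(\bar{x}) \leq \f(\xast)$ by definition of $\xast$.

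\medskip

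\emph{Step 2 (direct computation linking $\f(\bar{x})$ and $\g(\lambda)$).} By definition of the centroid map, $\p[\lambda]_i = 1/(\n\, \h[\lambda]_i)$ for each $i\in[\n]$. Therefore
\begin{align*}
    \f(\bar{x}) &= \sum_{i=1}^{\n}\log\!\left(\frac{1}{(1+\epsilon)\,\n\,\h[\lambda]_i}\right) \\
                &= -\sum_{i=1}^{\n}\log \h[\lambda]_i - \n\log \n - \n\log(1+\epsilon) \\
                &= \g(\lambda) - \n\log(1+\epsilon),
\end{align*}
where the last equality uses the definition $\g(\lambda) = -\sum_i \log(A^T\lambda)_i - \n\log \n$ and $\h[\lambda] = A^T\lambda$.

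\medskip

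\emph{Step 3 (conclude via duality).} Combining Steps 1 and 2 gives $\g(\lambda) - \n\log(1+\epsilon) \leq \f(\xast)$. By strong duality (cf. the lemma referenced in the introduction of the dual problem), $\f(\xast) = \g(\lambdaast)$, so $\g(\lambda) - \g(\lambdaast) \leq \n\log(1+\epsilon) \leq \n\epsilon$, using $\log(1+t)\leq t$ for $t\geq 0$.

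\medskip

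There is no real obstacle here: once one spots that the centroid map is tailored precisely so that $\f(\p[\lambda]) - \g(\lambda)$ is a constant ($-\n\log(1+\epsilon)$ after the rescaling), the bound is a one-line consequence of primal-feasibility of the rescaled point together with strong duality. The only mild care is checking that $\h[\lambda]>0$ so that $\p[\lambda]$ is well-defined, which follows from $\lambda\in\simplex{\m}$ together with the assumption (made in this section) that $A$ contains the $\n$ rows $\{\ecanonical[i]\}_{i=1}^{\n}$ and from $\h[\lambda]_i \geq \lambda_i$; one may restrict attention to $\lambda$ with $\h[\lambda]>0$ since otherwise $\g(\lambda)=+\infty$ and the statement is vacuous.
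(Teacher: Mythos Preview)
Your proof is correct and follows essentially the same route as the paper: rescale $\p[\lambda]$ by $1/(1+\epsilon)$ to obtain a primal-feasible point, compute that $\g(\lambda)-\f(\p[\lambda]/(1+\epsilon))=\n\log(1+\epsilon)$, and bound the dual gap via duality. The only cosmetic difference is that the paper invokes weak duality ($\g(\lambdaast)\geq \f(\bar{x})$) directly rather than going through $\f(\xast)$ and strong duality, but since strong duality is established elsewhere in the paper this is immaterial.
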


\begin{proof}
	As $A \p[\lambda]  \leq (1+\epsilon)\ones_{\m}$ we have $A\frac{\p[\lambda]}{1+\epsilon} \leq \ones_{\m}$ and hence $\frac{\p[\lambda] }{1+\epsilon}$ is primal feasible. Therefore,
\begin{align}
   \begin{aligned}
        \g(\lambda)-\g(\lambda^*) & \leq \g(\lambda) - \f\left(\frac{\p[\lambda]}{1+\epsilon}\right) \\
        &= -\sum_{i\in [\n]} \log (A^T\lambda)_i - \n\log \n - \sum_{i\in[\n]} \log \left(\frac{1}{\n(1+\epsilon)(A^T\lambda)_i}\right)\\
	& = \n \log(1+\epsilon) \leq \n\epsilon.
   \end{aligned}
\end{align}
\end{proof}

This lemma motivates the minimization of $\max_{i\in[\m]} \langle A_i, \c(A^T\lambda)\rangle$ for $\lambda \in \simplex{\m}$ as a proxy for solving Problem~\eqref{eq:dual_problem}. Furthermore, if we optimize over the set $\{p^{\lambda}=\c(A^T\lambda) : \lambda \in \simplex{\m}\}$, we end up with a feasibility problem in a packing \LP{}. However, this set is not convex in general, which is a requirement of the \PST{} framework we intend to use. Fortunately, we can optimize over a larger and convex set while preserving the minimizer. Indeed, define the sets of constraints $\newtarget{def:D}{\D}\defi \{A^T \lambda: \lambda \in \simplex{\m}\} = \conv(\{A_1,\dots,A_{\m}\})$ and $\newtarget{def:D_plus}{\Dp}\defi \{ A^T \lambda - \mu\geq \zeros: \lambda \in \simplex{\m}, \mu\in\Rp^{\n} \} = \{h \in \Rp^{\n} : h \leq \h[\lambda], \text{ for } \h[\lambda] \in \D\}$. For a constraint $h \in \Rp$, we have by polyhedral duality that $h\in \Dp$ if and only if $\innp{ h, x} \leq 1$ for all $x\in \P$. 
In other words, $\Dp$ is exactly the set of positive constraints $\innp{h,x}\leq 1$ with $h\in \Rp^{\n}$ that are feasible for $\P$. We can think about the optimization of Problem~\eqref{eq:dual_problem} as
\[
    \min_{\h[\lambda]\in \D}\{-\sum_{i=1}^{\n}\log(\h[\lambda][i])-n\log(\n)\} \circled{1}[=] \min_{h\in \Dp}\{-\sum_{i=1}^{\n}\log(h_i)-\n\log(\n)\},
\]
where $\circled{1}$ comes from the following observation: since $h\in\Dp$ if and only if there is $\h[\lambda]\in \D$ with $h\leq \h[\lambda]$, and the expression $-\sum_{i=1}^{\n}\log(h_i)-\n\log(\n)$ is strictly decreasing in every $h_i$ then no $h\in \Dp \setminus \D$ could ever minimize the right hand side.
Consequently, the minimizer of both problems is the same. This results in the following proxy, motivated by \cref{lemma:the_proxy_is_indeed_a_proxy}:
\begin{align}\label{eq:dual_problem_proxy}\tag{1FPD-Proxy}
    \min_{p \in \c(\Dp)} \Big\{\newtarget{def:dual_proxy}{\ghat}(p)\defi \max_{i\in[\m]} \langle A_{i}, p \rangle \Big\}.
\end{align}

By \cref{lemma:the_proxy_is_indeed_a_proxy}, the optimizer of this problem must be $\p[\lambdaast]$.  Note $\ghat(\p[\lambdaast]) = 1$ so we want a $p$ such that $\ghat(p) \leq 1+\epsilon/n$. We solve this as an approximate feasibility problem in a packing \LP{} by using the \PST{} framework over $\c(\Dp)$, which is convex, according to the following lemma.
\begin{lemma}\label{lemma:D+_convex}
  Let $p^{(1)}, \dots, p^{(k)}$ be points in $\c(\D)$, and let $\zeta\in \simplex{k}$ be coefficients. Then, we have ${\c(\zeta_1 \c^{-1}(p^{(1)})+ \dots + \zeta_k \c^{-1}(p^{(k)})) \leq \zeta_1 p^{(1)} + \dots +\zeta_k p^{(k)}}$.  Consequently, $\c(\Dp)$ is convex.
\end{lemma}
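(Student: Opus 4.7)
The plan is first to prove the inequality by reducing it coordinate-wise to the weighted AM--HM inequality (equivalently Jensen's inequality for $1/x$), and then to deduce the convexity of $\c(\Dp)$ using convexity of $\D$ together with the order-reversing nature of $\c$ on positive vectors.

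Each coordinate of $\c$ is the map $h_i\mapsto 1/(\n h_i)$, which is its own inverse on $(0,\infty)$ and strictly decreasing. Given $p^{(1)},\dots,p^{(k)}\in \c(\D)$, set $h^{(j)} \defi \c^{-1}(p^{(j)}) \in \D$, so that $p^{(j)}_i = 1/(\n h^{(j)}_i)$ with every coordinate strictly positive. In the $i$-th coordinate, the claimed inequality becomes $1/\bigl(\sum_{j} \zeta_j h^{(j)}_i\bigr) \leq \sum_j \zeta_j/h^{(j)}_i$, which is Jensen's inequality applied to the convex map $x\mapsto 1/x$ on $(0,\infty)$ with weights $\zeta \in \simplex{k}$. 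Since $i$ is arbitrary, the componentwise inequality follows.

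For convexity of $\c(\Dp)$, take arbitrary $p^{(1)},\dots,p^{(k)} \in \c(\Dp)$. By the definition of $\Dp$, there exist $\lambda^{(j)}\in\simplex{\m}$ with $\c^{-1}(p^{(j)}) \leq A^{T}\lambda^{(j)}$ componentwise. Set $\tilde{p}^{(j)} \defi \c(A^{T}\lambda^{(j)}) \in \c(\D)$; since $\c$ reverses the componentwise order on strictly positive vectors, $\tilde p^{(j)} \leq p^{(j)}$. Applying the first part to the points $\tilde p^{(j)}$, and noting that $\sum_j \zeta_j\, \c^{-1}(\tilde p^{(j)}) = A^{T}\bar\lambda$ with $\bar\lambda \defi \sum_j \zeta_j \lambda^{(j)} \in \simplex{\m}$, we obtain
$$\c(A^{T}\bar\lambda) \;\leq\; \sum_{j=1}^{k}\zeta_j \tilde p^{(j)} \;\leq\; \sum_{j=1}^{k}\zeta_j p^{(j)}.$$
Applying $\c$ once more to the leftmost and rightmost vectors yields $\c\bigl(\sum_j \zeta_j p^{(j)}\bigr) \leq A^{T}\bar\lambda$. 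Since $A^T\bar\lambda \in \D$, this shows $\c\bigl(\sum_j \zeta_j p^{(j)}\bigr) \in \Dp$, and hence $\sum_j \zeta_j p^{(j)} \in \c(\Dp)$.

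The mathematical core is a one-line use of Jensen's inequality; the main subtlety I anticipate is purely bookkeeping: carefully tracking the direction of the inequalities each time one composes with the coordinatewise decreasing map $\c$, and cleanly translating a bound of the form $\c(A^{T}\bar\lambda) \leq p$ into membership $\c(p) \in \Dp$ via the definition of $\Dp$.
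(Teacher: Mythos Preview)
Your proof is correct and follows essentially the same approach as the paper. The first part is the weighted HM--AM inequality (which the paper invokes by name), equivalent to your Jensen argument for $x\mapsto 1/x$; for convexity of $\c(\Dp)$, both you and the paper pull back to points of $\c(\D)$, apply the first part together with convexity of $\D$, and then use the order-reversing property of $\c$ to conclude---the only cosmetic difference is that the paper finishes by noting that $\c(\Dp)$ consists exactly of points coordinate-wise above some point of $\c(\D)$, whereas you apply $\c$ once more to land in $\Dp$ directly.
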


\begin{proof}
    Let us look at one coordinate $i\in [\n]$. By the weighted harmonic-arithmetic inequality:
\begin{align}
   \begin{aligned}
       \c(\zeta_1 \c(p^{(1)})+ \dots + \zeta_k \c(p^{(k)}))_i &=\left(\zeta_1 \frac{1}{p_{i}^{(1)}} + \dots +\zeta_k \frac{1}{p_{i}^{(k)}}\right)^{-1} \\
        &\leq \zeta_1 p_{i}^{(1)} + \dots+ \zeta_k p_{i}^{(k)} =\left(\zeta_1 p^{(1)} + \dots +\zeta_k p^{(k)}\right)_i.
   \end{aligned}
\end{align}
	Now we prove convexity of $\c(\Dp)$. Let $q^{(1)},\dots, q^{(k)}\in \c(\Dp)$. Since every constraint in $\Dp$ is coordinate-wise smaller than some constraint in $\D$, it follows that every point $q^{(j)} \in \c(\Dp)$ is coordinate-wise larger than some point $p^{(j)} \in \c(\D)$, that is $q^{(j)}\geq p^{(j)}$ for $j\in[k]$. Thus we obtain
	\[
        \sum_{j\in[k]} \zeta_j q^{(j)} \geq \sum_{j\in[k]} \zeta_j p^{(j)} \circled{1}[\geq] \c\left(\sum_{j\in[k]} \zeta_j \c(p^{(j)}) \right) \circled{2}[\in] \c(\D).
    \]
    Inequality $\circled{1}$ is the first part of the lemma above, and the membership $\circled{2}$ is due to the convexity of $\D$, which is a polytope. The constraints in $\c(\Dp)$ are exactly the ones that are coordinate-wise greater than some constraint in $\c(\D)$, so we have $\sum_{j\in[k]} \zeta_j q_j \in \c(\Dp)$.
\end{proof}

\begin{figure}
    \centering
    
    \includegraphics[width=0.9\linewidth]{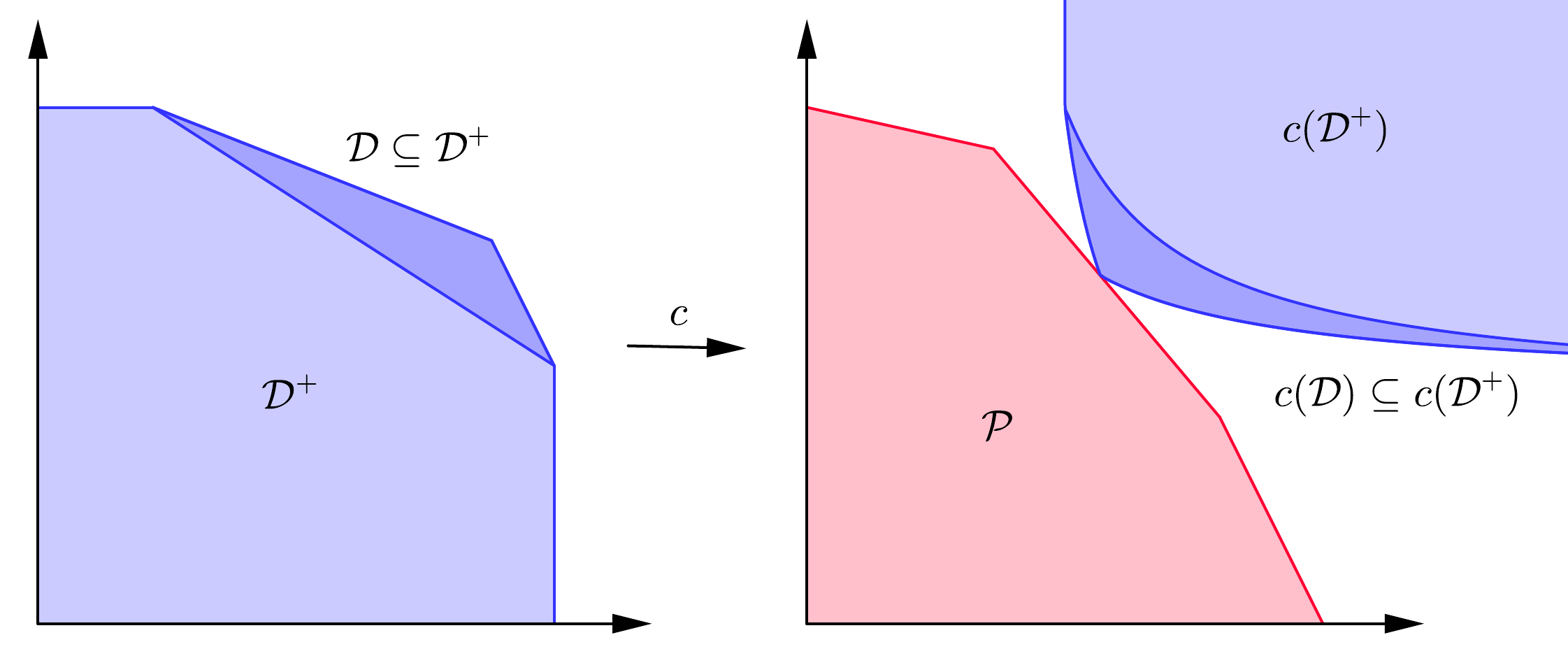}
    \caption{Left, the dual polytope $\Dp$ containing $\D$. The centroid $\c(\cdot)$ maps dual points (i.e., constraints) to primal points. Right, $\P$ and the image of $\D$ and $\Dp$ via $\c$. Note $\c(\Dp)\cap \P$ is exactly one point, which is also contained in $\c(\D)$. And note $\c(\D)$ can be non-convex, but $\c(\Dp)$ is convex.}
    \label{fig:primal_dual_plot}
\end{figure}

\subsection[The PST algorithm]{The {\protect\PST{}} algorithm}
\label{ssec:PST_algorithm}

Plotkin, Shmoys, and Tardos designed a framework (\PST{}) for solving \LP{} \citep{plotkin1995fast}. This result was improved by \citep{arora2012multiplicative} for the case of packing and covering \LP{}. We can use this framework to solve Problem~\eqref{eq:dual_problem_proxy} if we can provide a good feasibility oracle, as explained in the sequel. The \PST{} framework focuses on checking the feasibility of $Ap \leq \ones_{\m}$, with $p$ in a convex set $\X$.  Then, it obtains either a certificate of infeasibility of the problem or computes a point $p\in \X$ such that $Ap \leq (1+\epsilon)\ones_{\m}$, for some given $\epsilon > 0$. The \PST{} framework works by calling an oracle which solves the simpler feasibility problem of finding
\begin{equation}\label{eq:oracle_guarantee}
    p\in\X \text{ such that } \innp{A^T \Lambda, p} \leq 1,
\end{equation}
for some distribution $\Lambda \in \simplex{\m}$. That is, given a single constraint $\h[\Lambda]$, written as a convex combination of the constraints defined by $A$, the query asks for a point $p\in \X$ that satisfies the constraint. In our case, we apply the framework to $\X=\c(\Dp)$ to find a point $p\in \c(\Dp)$ with $Ap\leq (1+\epsilon)\ones_{\m}$. We can apply \PST{} because $\c(\Dp)$ is convex. Our oracle subproblems are always solvable because $\p[\lambdaast]\in \P$ satisfies them all. We use the following formulation of the \PST{} and multiplicative weights (\newtarget{def:acronym_multiplicative_weights}{\MW{}}) algorithms for packing \LP{}, which is a slight variation of \citep{arora2012multiplicative}. The \MW{} algorithm and its guarantees are presented in \cref{lemma:multiplicative_weights}.

\begin{lemma}[\PST{} guarantee]\linktoproof{lemma:PST_guarantee} \label{lemma:PST_guarantee}
    Let $\sigma, \tau \in\R_{>0}$. For a target accuracy $\epsilon \in (0, 4\min\{\sigma,\tau\}]$, run the \MW{} algorithm of \cref{lemma:multiplicative_weights} with $\delta = \epsilon/2$, losses $\lossk  \defi \ones_{\m} - A p^{(k)}$ assumed to be in $[-\sigma, \tau]^{\m}$, where $p^{(k)}$ is the point the oracle outputs at iteration $k$ when the constraint that is queried is given by $A^T (\Lambdak[k]/\norm{\Lambdak[k]}_1)$, and where $\Lambdak[k]$ are the weights computed by the \MW{} algorithm. Then, after $\newtarget{def:K_number_of_iterations_of_PST_alg}{\Kt[]}=\frac{32\sigma\tau\log(\m)}{\epsilon^2}$ iterations, we obtain a solution $\bar{p} \defi \frac{1}{\Kt[]}\sum_{k=1}^{\Kt[]} p^{(k)}$ that satisfies $\ghat(\bar{p})\leq 1+\epsilon$.
\end{lemma}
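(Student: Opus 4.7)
The plan is to combine the per-iteration oracle guarantee with the regret bound of the multiplicative weights routine of Lemma~\ref{lemma:multiplicative_weights}, then average to control $\ghat(\bar p)$ coordinate-wise.

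First I would unpack what the oracle gives us at each round. At iteration $k$, the \MW{} algorithm produces weights $\Lambdak[k]$ and queries the oracle with the distribution $\Lambdak[k]/\|\Lambdak[k]\|_1 \in \simplex{\m}$. The oracle returns $p^{(k)} \in \c(\Dp)$ satisfying \eqref{eq:oracle_guarantee}, i.e. $\innp{A^T\Lambdak[k]/\|\Lambdak[k]\|_1,\, p^{(k)}} \leq 1$. Rewriting in terms of the losses $\lossk = \ones_{\m} - Ap^{(k)}$, this is exactly $\innp{\Lambdak[k]/\|\Lambdak[k]\|_1,\, \lossk} \geq 0$. Summing over $k=1,\dots,\Kt[]$ gives that the total loss incurred by the \MW{} learner is nonnegative:
\[
  \sum_{k=1}^{\Kt[]} \innp{\Lambdak[k]/\|\Lambdak[k]\|_1,\, \lossk} \geq 0.
\]

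Next I would apply the \MW{} regret guarantee with $\delta = \epsilon/2$ and losses $\lossk \in [-\sigma,\tau]^{\m}$. For every fixed coordinate $i \in [\m]$, the lemma yields a bound of the form
\[
  \sum_{k=1}^{\Kt[]} \innp{\Lambdak[k]/\|\Lambdak[k]\|_1,\, \lossk} \;\leq\; \sum_{k=1}^{\Kt[]} \lossk[k][i] \;+\; \delta\, \sigma \tau \Kt[] / \max\{\sigma,\tau\} \;+\; \frac{\max\{\sigma,\tau\}\log \m}{\delta},
\]
with constants calibrated so that, for $\Kt[] = 32\sigma\tau\log(\m)/\epsilon^2$ and $\delta = \epsilon/2$, each of the two regret terms is at most $\tfrac{\epsilon}{2}\Kt[]$. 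The hypothesis $\epsilon \leq 4\min\{\sigma,\tau\}$ is exactly what is needed so that $\delta \leq 2\min\{\sigma,\tau\}$ and the \MW{} second-order bound applies cleanly on $[-\sigma,\tau]^{\m}$. Combining with nonnegativity of the left-hand side gives $\sum_{k=1}^{\Kt[]} \lossk[k][i] \geq -\epsilon \Kt[]$, which rearranges to $\innp{A_i,\, \bar p} \leq 1 + \epsilon$ for $\bar p = \frac{1}{\Kt[]}\sum_k p^{(k)}$. Since $i$ was arbitrary, $\ghat(\bar p) = \max_{i\in[\m]} \innp{A_i, \bar p} \leq 1 + \epsilon$.

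Finally I would note that $\bar p$ is a legitimate point in $\c(\Dp)$: each $p^{(k)} \in \c(\Dp)$, and by \cref{lemma:D+_convex} this set is convex, so the average lies in it. This closes the argument.

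The main obstacle, and the only part that really needs care, is to match constants in the \MW{} regret bound so that the two error terms collapse exactly to $\epsilon$ (not just $O(\epsilon)$) under the stated choices of $\delta$ and $\Kt[]$, while tracking the asymmetry between $\sigma$ and $\tau$ in the loss range. Everything else is bookkeeping: the oracle guarantee turns into nonnegative cumulative loss, the regret bound transfers this to a bound on $\innp{A_i,\bar p}$, and convexity of $\c(\Dp)$ keeps the averaged iterate in the feasible set.
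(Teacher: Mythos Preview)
Your proposal is correct and matches the paper's proof: nonnegativity of the algorithm's average loss from the oracle guarantee, combined with the \MW{} regret bound of \cref{lemma:multiplicative_weights} against each $u=\ecanonical$, yields $\innp{A_i,\bar p}\leq 1+\epsilon$. One calibration detail to fix: the paper's \cref{lemma:multiplicative_weights} is not purely additive as you wrote but carries a factor $(1+\signwidth\etat)$ on the comparator's cumulative loss; the paper absorbs this at the end via $\etat\leq 1/2$, and that division (not a second additive $\epsilon/2$ term) is what turns the single additive error $\delta=\epsilon/2$ into the final bound $\epsilon$.
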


In order to give a solution of Problem~\eqref{eq:dual_problem}, we are also interested in recovering some $\lambda\in \simplex{m}$ such that $\ghat(\p[\lambda])$ is small. Assume the oracle returns a point $p^{(k)}=\p[\lambda^{(k)}]$ for some $\lambda^{(k)}$ it can also provide. Then, even if $\bar{p} \in \Dp \setminus \D$, we have by \cref{lemma:D+_convex} that $\bar{\lambda} \defi \frac{1}{K}\sum_{k=1}^K \lambda^{(k)}$ defines a point $\p[\bar{\lambda}] \in \D$ such that $\p[\bar{\lambda}] \leq \bar{p}$. Hence $\ghat(\p[\bar{\lambda}]) \leq \ghat(\bar{p}) \leq 1+\epsilon$, so $\bar{\lambda}$ can be used as our dual solution.

Thus our task is to construct an oracle with good enough $\sigma$ and $\tau$, which are called the width parameters of the oracle. We also need to make sure that $\epsilon \leq 4\min\{\sigma,\tau\}$, and that we can output a point $\p[\lambda]$ in $\D$, and a corresponding $\lambda$. Regardless, this algorithm runs in $\bigotilde{\sigma\tau/\epsilon^2}$ iterations, which is slower than what we aim for, for constant $\sigma$ and $\tau$. Our approach to obtain a fast algorithm is to design an adaptive feasibility oracle. We provide our best solution $\bar{\lambda}^{(t)}$ to the oracle, which satisfies $\ghat(\p[\bar{\lambda}^{(t)}]) \leq 1+\delta$, for some $\delta >0$. Given such a $\delta$-approximate solution we identify a set of indices $\newtarget{def:I_t}{\It}\subseteq [\n]$ such that the constraints $\{A_i, i\not\in \It\}$ are redundant, in the sense that the oracle is guaranteed to return points satisfying them even if they are removed from the problem. We remove these constraints since they would yield large values of $\tau$. Hence, we can run our \MW{} algorithm with the remaining $\abs{\It}$ constraints. Denote $\newtarget{def:A_It}{\AI}$ the matrix that has $\{A_i:i \in \It\}$ as rows, in increasing order of $i$. In this case if $\Lambda \in \simplex{\abs{\It}}$, we denote $\h[\Lambda]=\AI[\It][T]\Lambda$ and $\p[\Lambda]=\c(\AI[\It][T]\Lambda)$ accordingly. Our oracle, when given a query $\h[\Lambda]$, uses both constraints $\h[\Lambda]$ and $\h[\bar{\lambda}^{(t)}]$ to return a point $p$ satisfying the oracle condition~\eqref{eq:oracle_guarantee} and such that the loss $\ones_{\abs{\It}}-\AI p$ is in $[-\sigmasub{\delta}, \tausub{\delta}]^{\abs{\It}}$, with
\begin{equation}\label{eq:oracle_width}
    \newtarget{def:width_parameter_tau_sub_delta}{\sigmasub{\delta}} \defi 
        \begin{cases} 
            \sqrt{2\delta \n} + 2\delta \n & \text{ if } \delta \leq 2 \\
            \frac{1+2\delta}{1+\delta}\max_{i\in[\n]}\{1/\h[\bar{\lambda}^{(t)}][i]\}-1 & \text{ if }\delta > 2\\
        \end{cases}
        ,\quad \quad\quad \newtarget{def:width_parameter_sigma_sub_delta}{\tausub{\delta}} \defi \min\{3\sqrt{2\delta \n}, 1\}.
\end{equation}
In fact, $\sigmasub{\delta}$ can be defined as the minimum of its two expressions above, regardless of the value of $\delta$. But we shall use this definition for our algorithm. In the next section, we present the construction and analysis of such an oracle and the set $\It$. We observe that, because the parameters depend on $\delta$, we can restart the \MW{} algorithm, and run it in several stages, indexed by $t=0,\dots, T$. This allows for incrementally reducing the width parameters and obtaining a better overall complexity, as we prove in the following theorem.

\begin{algorithm}
    \caption{Optimization of the dual of 1-fair packing with oracle $\oracle$} 
    \label{alg:pst_mw}
\begin{algorithmic}[1]

    \REQUIRE Matrix $A \in \mathcal{M}_{\m\times \n}(\Rp)$ normalized as in \eqref{eq:normalization_of_A}. Accuracy $\epsilon\in (0, (\n-1)\n]$.
    \State $\bar{\lambda}^{(0)} \gets \concat(\ones_{\n}/\n, \mathbb{0}_{\m-\n})\in\simplex{\m}$; \ \ $\epsT[-1] = \n-1$ \Comment{$\p[\bar{\lambda}^{(t)}]$ is an $\epsT[t-1]$-minimizer of $\ghat$}
    \vspace{0.1cm}
    \hrule
    \vspace{0.1cm}
    \FOR {$t = 0 \text{ \textbf{to} } T \defi\max\{0, \lceil\log_2(2/(\epsilon/\n))\rceil\}$}
        \State $\It \gets \{i \in [\m] : \innp{A_i, \p[\bar{\lambda}^{(t)}]} \geq \frac{1+\epsT[t-1]}{1+2\epsT[t-1] \n +\sqrt{2\epsT[t-1] \n}}\}$ \Comment{Remove redundant constraints} \label{algo:line:filtering_constraints}
        \State $\textbf{ if } t\textbf{ is }0\textbf{ then }\epsT \gets \max\{2, \epsilon/\n\} \textbf{ else } \epsT \gets \epsT[t-1]/2$          \Comment{Next target accuracy}
        \State $\Lambdak[1] \gets \ones_{\abs{\It}}$ \Comment{Restart \MW{}}
        \FOR {$k = 1 \text{ \textbf{to} } \Kt[t]\defi 32\tausub{\epsT[t-1]} \sigmasub{\epsT[t-1]}\log(|\It|)/\epsT[t][2] =\bigotilde{\n/\epsT}$}
            \State $\lambda^{(k)}, \p[\lambda^{(k)}] \gets \oracle(\text{query }\Lambdak[k]/\norm{\Lambdak[k]}_1,\text{ current solution }\bar{\lambda}^{(t)},\text{ index set }\It)\in (\simplex{\m}, \c(\D))$
            \State $\lossk  \gets \ones_{\abs{\It}} - \AI \p[\lambda^{(k)}]$ \label{line:computing_losses}
            \State $\Lambdak[k+1] \gets \Lambdak[k]\odot(\ones_{\abs{\It}}- (\epsT/(4\tausub{\epsT[t-1]}\sigmasub{\epsT[t-1]}))\cdot\lossk )$ \Comment{\MW{} step}
        \ENDFOR
        \State $\bar{\lambda}^{(t+1)} \gets \frac{1}{\Kt[t]}\sum_{k=1}^{\Kt[t]} \lambda^{(k)}$\label{line:computing_average_of_lambdas} 
    \ENDFOR
    \State \textbf{return} $\bar{\lambda} \defi \bar{\lambda}^{(T+1)}$
    \ENSURE $A \p[\bar{\lambda}] \leq (1+\epsilon/\n)\ones_{\m}$, that is, $\ghat(\p[\bar{\lambda}]) \leq 1+\epsilon/\n$. Hence $\g(\bar{\lambda})-\g(\lambdaast)\leq \epsilon$.
\end{algorithmic}
\end{algorithm}

\begin{theorem}\label{thm:dual_guarantee}
    Let $\epsilon \in (0,\n(\n-1)]$. Suppose we have a feasibility oracle $\oracle$ satisfying \eqref{eq:oracle_guarantee} and \eqref{eq:oracle_width}, when we filter constraints according to \cref{alg:pst_mw}. Then, \cref{alg:pst_mw} computes, in $\bigotilde{\n^2/\epsilon}$ iterations, a point $\bar{\lambda}$ which is an $\epsilon$-minimizer of $\g$. Moreover, $\p[\bar{\lambda}]$ is an $(\epsilon/\n)$-minimizer of $\ghat$.
\end{theorem}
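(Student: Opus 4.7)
The plan is to proceed by induction on the stage index $t$, maintaining the invariant that at the start of stage $t$ the current iterate satisfies $\ghat(\p[\bar{\lambda}^{(t)}]) \leq 1 + \epsT[t-1]$. For the base case $t=0$, note $\bar{\lambda}^{(0)} = \concat(\ones_{\n}/\n, \zeros_{\m-\n})$ sums only over the first $\n$ rows of $A$, which we assumed to be $\{\ecanonical[i]\}_{i=1}^{\n}$; hence $\h[\bar{\lambda}^{(0)}] = \ones_{\n}/\n$ and $\p[\bar{\lambda}^{(0)}] = \ones_{\n}$. Using \eqref{eq:normalization_of_A} we get $\ghat(\p[\bar{\lambda}^{(0)}]) = \max_i \innp{A_i,\ones_{\n}} \leq \n = 1 + \epsT[-1]$, confirming the invariant.

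For the inductive step, assume the invariant holds at stage $t$. The stage runs the \MW{} algorithm on the filtered matrix $\AI[\It]$ with target accuracy $\epsT$, calling the oracle which, by hypothesis \eqref{eq:oracle_width}, returns points whose losses lie in $[-\sigmasub{\epsT[t-1]}, \tausub{\epsT[t-1]}]^{\abs{\It}}$. One must first verify the \PST{} admissibility condition $\epsT \leq 4\min\{\sigmasub{\epsT[t-1]},\tausub{\epsT[t-1]}\}$, which follows from the explicit forms of $\sigmasub{\delta}$ and $\tausub{\delta}$ together with the choice $\epsT = \epsT[t-1]/2$ (or $\epsT = \max\{2,\epsilon/\n\}$ when $t=0$). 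Then \cref{lemma:PST_guarantee} yields $\AI[\It]\bar{p} \leq (1+\epsT)\ones_{\abs{\It}}$ for $\bar{p} \defi \Kt[t]^{-1}\sum_k \p[\lambda^{(k)}]$, and by \cref{lemma:D+_convex} the averaged multiplier $\bar{\lambda}^{(t+1)}$ satisfies $\p[\bar{\lambda}^{(t+1)}] \leq \bar{p}$ coordinatewise, so $\AI[\It]\p[\bar{\lambda}^{(t+1)}] \leq (1+\epsT)\ones_{\abs{\It}}$.

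The key step is then to argue that the removed constraints $\{A_i : i \notin \It\}$ are automatically satisfied with the same slack. By definition of \cref{algo:line:filtering_constraints}, any $i \notin \It$ has $\innp{A_i,\p[\bar{\lambda}^{(t)}]} < (1+\epsT[t-1])/(1+2\epsT[t-1]\n + \sqrt{2\epsT[t-1]\n})$. The oracle outputs only points $\p[\lambda^{(k)}] \in \c(\D)$, and one uses the oracle's explicit construction (based on blending $\h[\bar{\lambda}^{(t)}]$ with the queried constraint) to show that every $\p[\lambda^{(k)}]$ remains coordinate-wise below $(1+2\epsT[t-1]\n+\sqrt{2\epsT[t-1]\n})\p[\bar{\lambda}^{(t)}]$ up to the factor $(1+\epsT[t-1])^{-1}$; combining this with the threshold in the filter shows $\innp{A_i,\p[\lambda^{(k)}]} \leq 1$ for $i \notin \It$, so the averaged $\p[\bar{\lambda}^{(t+1)}]$ inherits $\AI p \leq (1+\epsT)\ones_{\m}$ on all $\m$ rows, closing the induction. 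I expect this filtering-validity step to be the main obstacle, as it depends on intrinsic stability properties of the oracle construction that must be matched carefully to the threshold in line~\ref{algo:line:filtering_constraints}.

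After the final stage $t=T$, the invariant gives $\ghat(\p[\bar{\lambda}]) \leq 1 + \epsT[T] \leq 1 + \epsilon/\n$, since $T$ is chosen so that halving $\epsT[0] = \max\{2,\epsilon/\n\}$ exactly $T$ times reaches $\epsilon/\n$. Applying \cref{lemma:the_proxy_is_indeed_a_proxy} with approximation parameter $\epsilon/\n$ yields $\g(\bar{\lambda}) - \g(\lambdaast) \leq \n \cdot (\epsilon/\n) = \epsilon$. For the iteration count, $\Kt[t] = \widetilde{O}(\sigmasub{\epsT[t-1]}\tausub{\epsT[t-1]}/\epsT[t][2]) = \widetilde{O}(\epsT[t-1]\n / \epsT[t][2]) = \widetilde{O}(\n/\epsT)$ in the regime $\epsT[t-1] \leq 2$, so the geometric schedule $\epsT = \epsT[t-1]/2$ makes the $K_t$'s form a geometric series dominated by the final term $\widetilde{O}(\n/\epsT[T]) = \widetilde{O}(\n^2/\epsilon)$. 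The stages with $\epsT[t-1] > 2$ contribute only the initial stage $t=0$ which uses $\epsT = 2$, and for that stage $\sigmasub{\n-1}$ is bounded using $\max_i\{1/\h[\bar{\lambda}^{(0)}][i]\} = \n$, giving an overhead absorbed by the $\widetilde{O}(\cdot)$. Summing yields the claimed $\widetilde{O}(\n^2/\epsilon)$ total iteration bound.
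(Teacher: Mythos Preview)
Your proposal is correct and follows essentially the same route as the paper: stage-by-stage induction on the invariant $\ghat(\p[\bar{\lambda}^{(t)}])\le 1+\epsT[t-1]$, applying \cref{lemma:PST_guarantee} with the width parameters \eqref{eq:oracle_width}, pushing from $\bar p$ to $\p[\bar{\lambda}^{(t+1)}]$ via \cref{lemma:D+_convex}, and summing the geometric series $\Kt[t]=\widetilde O(\n/\epsT)$ to get $\widetilde O(\n^2/\epsilon)$.

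The one place where your emphasis differs from the paper is the ``filtering-validity'' step you flag as the main obstacle. In the paper this is \emph{not} argued inside the theorem proof at all: it is part of the oracle's output specification (item~3 in \cref{alg:oracle_3}) and is proven separately in \cref{prop:our_oracle_satisfies_the_nice_properties} via \cref{lem:lens_box}. Concretely, the threshold in line~\ref{algo:line:filtering_constraints} is exactly $\innp{A_i,U}\ge 1$ for $U$ the upper vertex of the bounding box of the lens; since every oracle output lies in the lens (hence in the box), any $i\notin\It$ automatically has $\innp{A_i,\o}\le\innp{A_i,U}<1$. Your sketch of this mechanism is accurate, but you should regard it as a property \emph{of the oracle} (hence part of the hypothesis), not as something to be re-derived in the theorem. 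With that adjustment, the obstacle disappears.

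Two minor points where the paper is slightly more explicit than you: it splits the verification of $\epsT\le 4\min\{\sigmasub{\epsT[t-1]},\tausub{\epsT[t-1]}\}$ into the cases $\epsT\ge 1/\n$ and $\epsT<1/\n$, and it treats the degenerate case $T=0$ (i.e.\ $\epsilon/\n>2$) separately, artificially inflating $\tausub{\epsT[-1]}$ to $\epsilon/\n$ to keep the admissibility condition. Neither affects the overall $\widetilde O(\n^2/\epsilon)$ bound, and your handling of stage $t=0$ via $\max_i\{1/\h[\bar{\lambda}^{(0)}][i]\}=\n$ is the same as the paper's $\sigmasub{\n-1}=2\n-2$.
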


\begin{proof}
    Our aim is to use the \PST{} framework to obtain a $\widehat{\lambda} \in \simplex{\m}$ such that $\p[\widehat{\lambda}]$ is an $(\epsilon/\n)$-minimizer of $\ghat$ so that we can use \cref{lemma:the_proxy_is_indeed_a_proxy} to conclude $\widehat{\lambda}$ is an $\epsilon$-minimizer of $\g$.
    We will run the \MW{} algorithm several times, restarting the weights between executions. Let $\newtarget{def:epsilon_t}{\epsT} \defi 2^{1-t}$ for $t\geq 0$. At phase $t$, we aim to obtain a dual point $\bar{\lambda}^{(t+1)}$ such that $\p[\bar{\lambda}^{(t+1)}]$ is an $\epsT$-minimizer of $\ghat$. So we first aim for a $2$-minimizer and then we halve the accuracy sequentially. The initial point is $\bar{\lambda}^{(0)} \defi \newtarget{def:concat}{\concat}(\ones_{\n}/\n, \mathbb{0}_{\m-\n})$, where $\concat$ is defined as the concatenation of vectors, so that $\bar{\lambda}^{(0)}=(\frac{1}{n}, \dots, \frac{1}{n}, 0,\dots,0) \in \simplex{\m}$, with $n$ entries with value $\frac{1}{n}$. This point satisfies that $\h[\bar{\lambda}^{(0)}] = \ones_{\n}/\n$ and $\p[\bar{\lambda}^{(0)}] = \ones_{\n}$, since we assumed $x_i \leq 1$ are the first $\n$ constraints of $A$, i.e., the first $\n$ rows of $A$ are $\ecanonical$ for $i\in[\n]$. Because the maximum entry of $A$ is $1$, we have  $\ghat(\p[\bar{\lambda}^{(0)}]) \leq \n$, i.e., $\p[\bar{\lambda}^{(0)}]$ is an $(\n-1)$-minimizer of $\ghat$. Hence we denote $\epsT[-1] = n-1$ for convenience.

    So for $t=0$, the first phase, we seek to find a $2$-minimizer of $\ghat$. Thus, this is the only phase in which we use the second case of the value of $\sigmasub{\delta}$, cf. \eqref{eq:oracle_width}. We have $\tausub{\epsT[-1]} = 1$ and $\sigmasub{\epsT[-1]} = 2\n-2$ and $4\min\{\tausub{\epsT[-1]}, \sigmasub{\epsT[-1]}\} = 4$. Note that if $\epsilon/\n \geq 2$, we can actually stop earlier so $T$ in the algorithm is $0$, there is only one phase, and $\epsT[0]$ is actually set to $\epsilon/\n$. At this phase our \textit{good previous solution} is the initial point $\bar{\lambda}^{(0)}$. Assume first $\epsT[0] = 2$. The condition $2 = \epsT[0] \leq 4\min\{\tausub{\epsT[-1]}, \sigmasub{\epsT[-1]}\} =4$ is satisfied. Thus, according to \cref{lemma:PST_guarantee}, we reach the $\epsT[0]$-minimizer after $\Kt[0] = \bigotilde{\frac{\n}{\epsT[0][2]}} = \bigotilde{\n}$ iterations. If $\epsT[0]$ is $\epsilon/\n > 2$, we can artificially set $\tausub{\epsT[-1]}$ to a larger value, say $\epsilon/\n$, so that the condition $\epsT[0] \leq 4\min\{\tausub{\epsT[-1]},\sigmasub{\epsT[-1]}\}=4\tausub{\epsT[-1]}$ trivially holds, and the complexity is $\bigotilde{\frac{\epsilon/\n \cdot \sigmasub{\epsT[-1]}}{(\epsilon/\n)^2}} = \bigotilde{\frac{\n^2}{\epsilon}}$ iterations, which satisfies the theorem. In fact, for large $\epsilon$, i.e., for $\epsilon/\n>2$, just aiming for an $\widehat{\oldepsilon}=\exp(\epsilon/\n)-1$ is significantly faster and enough. The latter is true according to \cref{lemma:the_proxy_is_indeed_a_proxy} without bounding $\log(1+\widehat{\oldepsilon}) \leq \widehat{\oldepsilon}$.

    Now, if $T>0$, we run several phases of the \MW{} algorithm. Iteration $t>0$ takes $\Kt[t] \defi \frac{32\tausub{\epsT[t-1]} \sigmasub{\epsT[t-1]}\log(|\It|)}{\epsT[t][2]}=\bigotilde{\frac{\tausub{\epsT[t-1]}\sigmasub{\epsT[t-1]}}{\epsT[t][2]}}$ iterations by the \PST{} guarantee, cf \cref{lemma:PST_guarantee}. If $\epsT[t-1] > \frac{1}{\n}$ we have $\tausub{\epsT[t-1]}\cdot\sigmasub{\epsT[t-1]} = O(1 \cdot \epsT[t-1] \n)$ and if $\epsT[t-1] \leq \frac{1}{\n}$ we have $\tausub{\epsT[-1]}\cdot\sigmasub{\epsT[t-1]} = O(\sqrt{\epsT[t-1]\n} \cdot \sqrt{\epsT[t-1]\n})$. In any case, it is $\Kt[t] = \bigotilde{\frac{\n}{\epsT}}$, as $\epsT[t-1]/\epsT = 2 = O(1)$. The assumption $\epsT \leq 4\min\{\sigmasub{\epsT[t-1]}, \tausub{\epsT[t-1]}\}$ is satisfied in these phases. Indeed, if $\epsT\geq 1/\n$, we have $\epsT <\epsT[0] =2$ and $\sigmasub{\epsT[t-1]}\geq 1$, $\tausub{\epsT[t-1]}=1$ . In the case $\epsT<1/\n$, we have $4\min\{\tausub{\epsT[t-1]}, \sigmasub{\epsT[t-1]}\} \geq \sqrt{\epsT[t] \n}$ which is $>\epsT[t]$.

    If $T =\lceil\log_2(2/(\epsilon/\n))\rceil >0$, then $\epsT[T] \leq \epsilon/\n$ and the total number of iterations is
\[
    \sum_{t=0}^T \Kt[t] = \bigotilde{\sum_{t=0}^T \frac{\n}{\epsT}}=\bigotilde{\sum_{t=0}^T \n 2^{t-1}}=\bigotilde{\n 2^T} = \bigotilde{\frac{\n^2}{\epsilon}}.
\]

    The complexity of an iteration is $\bigotilde{\N}$, assuming $N\geq \m$ (recall $\m\geq \n$ since we added the constraints $x_i \leq 1$). Indeed, computing $\It$, \cref{line:computing_losses}, and computing the constraints of the query $\Lambdak[k]/\norm{\Lambdak[k]}_1$ and current solution $\bar{\lambda}^{(t)}$, to be used by the oracle, requires multiplying a vector by $A$ or a subset of its rows, which is $O(\N)$. The oracle query takes $\bigotilde{\n}$, and the rest of operations in \cref{alg:pst_mw} are simple and take $O(\m)$ time. Note the amortized complexity of \cref{line:computing_average_of_lambdas} is $O(\m)$ per iteration.
\end{proof}

\subsection[The PST oracle and the redundant constraints]{The {\protect\PST{}} oracle and the redundant constraints}
\label{ssec:PST_oracle}

In order to give a fast solution for Problem~\eqref{eq:dual_problem_proxy}, we want to design an adaptive oracle. That is, it should satisfy \eqref{eq:oracle_guarantee}, and it should output better points if it already knows a good approximate solution. Generically, assume that the oracle has access to a feasible constraint $\newtarget{def:current_solution_constraint}{\s}\defi A^T\lambdas \in \D$, with $\newtarget{def:lambda_current_solution}{\lambdas}\in\simplex{\m}$ satisfying $\p[\lambdas]=\ps$ is a point with $\ghat(\p[\lambdas]) \leq 1 + \delta$. In other words, $\ps$ is a $\delta$-approximate solution to Problem~\eqref{eq:dual_problem_proxy}. Let $\newtarget{def:query_oracle_constraint}{\q}\defi A^T \lambdaq \in \D$, with $\newtarget{def:lambda_query_oracle}{\lambdaq} \in \simplex{\m}$ be a query constraint. In \cref{alg:pst_mw}, $\lambdas$ and $\lambdaq$ correspond to $\bar{\lambda}^{(t)}$ and $\Lambdak[k]/\norm{\Lambdak[k]}_1$ respectively, where the latter is interpreted as having coefficients equal to zero for constraints $i\not\in \It$.

\begin{figure}[ht!]
    \centering
    \includegraphics[width=0.65\linewidth]{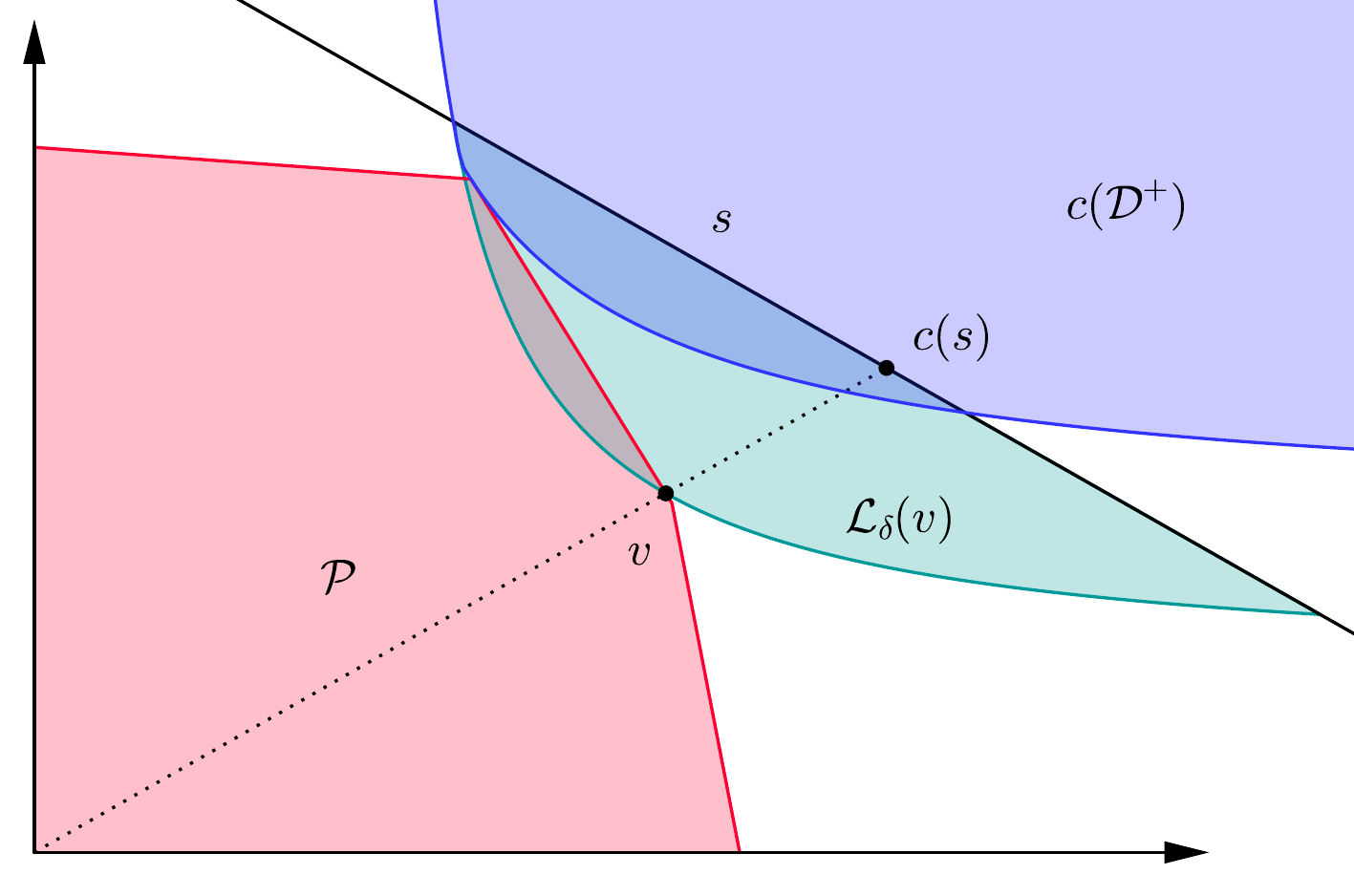}
    
    \caption{The lens ${\protect\lens[v][\delta]}$ given by a feasible solution $\s$, for $\omega=1$. In the actual algorithm, we have $\omega>1$, which defines a larger set in which the affine part is translated upwards.}
    \label{fig:geometric_intuition_of_the_lens}
\end{figure}

The main geometric idea is that by using the solution $s$, we can define a region whose size depends on $\delta$ containing the optimum $\p[\lambdaast]$ of Problem~\eqref{eq:dual_problem_proxy}. We will guarantee the oracle returns points in this region, which in turn means that the oracle returns points close to the optimum. We call this geometric object the \emph{lens of a point}:
\[
\newtarget{def:lens_of_a_point}{\lens[v][\omega\delta]} \defi \{x\in\Rp^{\n} : \innp{\c^{-1}(x), v} \leq 1, \innp{\c^{-1}(v),x} \leq 1+\omega \delta\},
\] 
where $\omega \in (1,2]$ is a parameter; we chose $\omega=2$ in the algorithm. We depict the lens in \cref{fig:geometric_intuition_of_the_lens}. We apply this definition to the point $\newtarget{def:aux_point_v_for_oracle}{\v} \defi \c(\s)/(1+\delta)$, which is in $\P$ because $\ghat(\s)\leq 1+\delta$ implies that $\c(\s)/(1+\delta)\in \P$.

We show in \cref{lem:lens_output} that, by using the bisection method, we can efficiently find a convex combination $(1-\mu)\s + \mu \q$ that will result in a constraint, whose centroid is the point $\o$ we output. It satisfies the first oracle condition, and is in the lens. Also, we can recover $\lambdao$ as $(1-\mu)\lambdas + \mu \lambdaq$.

If the oracle can output a point $\o$ in the lens, we will have low width parameters for the constraints $A_i$ that do not cover the lens, as we show in \cref{prop:our_oracle_satisfies_the_nice_properties}. That is, for one such constraint $A_i$, the corresponding loss is $1-\innp{A_i, \o} \in [-\sigmatrue, \tautrue]$. The other constraints could be problematic in terms of width parameters because $\innp{A_i, \o}$ could be much smaller than $1$, forcing $\tautrue$ to be large. However, we do not need to optimize over these constraints because $\ghat \geq 1$, so these constraints do not contribute to the $\max$ in its definition. This leads to the set of non-redundant constraints $\Igeneric$, which for efficiency reasons, we relax to those indices of constraints $A_i$ that do not cover a box that contains the lens, cf. \cref{lem:lens_box}. We prove this is good enough in terms of the width parameters. The computation of $\Igeneric$, which is done before each restart phase, requires $O(N)$ operations and each query to the oracle takes $O(\n\log(\frac{\n}{(\omega-1)\delta} +\frac{\n}{\omega-1}))$ operations, cf. \cref{lem:lens_output}.

\begin{algorithm}
    \renewcommand{\algorithmicensure}{\textbf{Output:}}
    \caption{Feasibility oracle $\oracle$} 
    \label{alg:oracle_3}
\begin{algorithmic}[1]
    \REQUIRE An approximate solution $\s \defi A^T \lambdas$, $\lambdas\in \simplex{\m}$, $\ghat(\ps) \leq 1+\delta$. Query constraint $\q=A^T \lambdaq$, $\lambdaq \in \simplex{\m}$. Precision parameter $1<\omega \leq 2$, default value $\omega=2$. Index set of \emph{non-redundant constraints} $\Igeneric = \{i \in [\m] : \innp{A_i, \ps} \geq \frac{1+\delta}{1+\omega\delta \n +\sqrt{\omega\delta \n}}\}$.

    \ENSURE $\newtarget{def:lambda_output_oracle}{\lambdao}\in \simplex{\m}$ and point $\newtarget{def:output_of_oracle}{\o}\defi\c(A^T\lambdao)\in\c(\D)$ such that
    \begin{enumerate}
        \item $\o$ is covered by $\q$, i.e., $\innp{\q, \o} \leq 1$. 
        \item If $i \in \Igeneric$ then $\innp{A_i,\o} \in [1-\tautrue, 1+\sigmatrue]$ where $\sigmatrue = \min (\sqrt{\omega\delta \n} +\omega\delta \n, \frac{1+\omega\delta}{1+\delta} \max_{i\in [\n]} \s_i^{-1} -1)$, $\tautrue = \min(3\sqrt{\omega\delta \n},1)$. 
        \item It satisfies all redundant constraints, i.e., $\innp{A_i, \o} \leq 1$, if $i \in [\m]\setminus \Igeneric$.
    \end{enumerate}

    \vspace{0.1cm}
    \hrule
    \vspace{0.1cm}
    \IF{$\innp{\s, \c(\q)} \leq \frac{1+\omega\delta}{1+\delta}$} \textbf{return} $\lambdao = \lambdaq$, \ \  $\o=\c(\q)$
    \ELSIF{$\innp{\q, \ps} \leq 1$} \textbf{return} $\lambdao = \lambdas$, \ \  $\o=\ps$
    \ELSE\  Find $\mu \in(0,1)$ s.t. $\innp{\s, \c((1-\mu)\s+ \mu \q)} \in (1, \frac{1+\omega\delta}{1+\delta})$ via the bisection method
    \ENDIF
    
      \State \textbf{return} $\lambdao = (1-\mu)\lambdas + \mu \lambdaq$,\ \  $\o=\c((1-\mu)\s + \mu \q)$
\end{algorithmic}
\end{algorithm}

}

{

\subsection{An application to the Yamnitsky-Levin algorithm via \ref{eq:dual_problem}} \label{sec:yl}

The Yamnitsky-Levin algorithm (\newtarget{def:acronym_yamnitsky_levin}{\YL{}}), or method of simplices \citep{yamnitsky1982}, belongs to a family of \LP{} algorithms that bound the feasible region by a geometric shape that is progressively decreased in volume, in order to solve the feasibility problem, using which one can solve \LP{}. \citep{levin1965algorithm} proposed the method of centralized splitting as a way to solve the linear feasibility problem. It starts with a simplex bounding the feasible region $P$ that is sequentially intersected with several halfspaces containing $P$. Then, it re-encloses the remaining polyhedron by a simplex that is smaller in volume. This algorithm later inspired the ellipsoid algorithm \citep{yudin1976informational}, that only needed the intersection with one halfspace. Later \citep{yamnitsky1982} proved that their method of simplices, which is a variant of the algorithm in \citep{levin1965algorithm}, can use only one intersection per iteration, and solves \LP{} in weakly polynomial time. In particular, the \YL{} algorithm computes the centroid $c^{(k)}$ of the current bounding simplex $\Delta^{(k)}$ at each iteration, and chooses an arbitrary constraint $h^{(k)}$ of the \LP{} problem that does not cover $c^{(k)}$. The simplex $\Delta^{(k+1)}$ results from substituting the facet that opposes the vertex $v^{(k)}$ that is farthest from $h^{(k)}$, by a convex combination of this facet and $h^{(k)}$. 

We note that since the \YL{} algorithm chooses constraints arbitrarily, one can first consider the set of constraints $\mathcal{H}_v$ whose normals are in the normal fan of any particular fixed vertex $v$, and one would optimize the volume of the simplex when the corner given by $v$ is fixed. The \YL{} algorithm would repeatedly run this procedure in stages, by pivoting to other vertices, until a feasible point is found or until the volume of the bounding simplex is small enough. In this case, a stage is equivalent to \eqref{eq:dual_problem}. One stage stops when all $h\in\mathcal{H}_v$ cover $c^{(k)}$, and this stopping condition is equivalent to reaching a $(1/\n)$-minimizer of~\eqref{eq:dual_problem_proxy}, for a matrix $A$ depending on $\mathcal{H}_v$. 

The classical analysis of the \YL{} algorithm guarantees that a stage finishes after $\bigotilde{\N \n^3}$ operations if we start with a natural non-informed initial simplex. Our \cref{alg:pst_mw} achieves this accuracy after $\bigotilde{\N \n^2}$ operations. See \cref{app:section:yl} for the description and analysis of the \YL{} stage.

}

\acks{
We thank Elias Koutsoupias for some suggestions and discussions that started the line of research of this work. David Martínez-Rubio and Francisco Criado were partially funded by the DFG Cluster of Excellence MATH+ (EXC-2046/1, project id 390685689) funded by the Deutsche Forschungsgemeinschaft (DFG). David Martínez-Rubio was also partially supported by EP/N509711/1 from the EPSRC MPLS division, grant No 2053152.
}

\clearpage

\printbibliography[heading=bibintoc] 

\clearpage
\appendix

\section[Missing proofs from Section \ref{sec:intro}]{Missing proofs from \cref{sec:intro}}

\begin{lemma} \label{lem:lagrange}
  Recall the definition of the primal problem \eqref{eq:primal_problem}:
\begin{align}
    \tag{1FP} 
    \max_{x\in \Rp^{\n}} \left\{ \f(x) \defi \sum_{i=1}^{\n} \log x_i : Ax \leq \ones_{\m} \right\}.
\end{align}

Then, its Lagrange dual can be formulated as:

\begin{align}
	\tag{1FP-Dual}
    \min_{\lambda \in \simplex{\m}} \left\{ \g(\lambda) \defi -\sum_{i=1}^{\n} \log (A^T \lambda)_i - \n \log \n  \right\}.
\end{align}

Where $\simplex{\m}\defi \{\lambda \in \R^{\m} : \sum \lambda_i = 1, \lambda \geq 0 \}$ is the $\m$-dimensional (probability) simplex.

\end{lemma}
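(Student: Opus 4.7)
The plan is to form the standard Lagrangian, eliminate the primal variable by explicit maximization, and then reduce the resulting nonnegative-orthant problem to a simplex problem by optimizing over the overall scale.

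First, I would introduce multipliers $\mu \in \Rp^{\m}$ for the constraints $Ax \le \ones_{\m}$ and write the Lagrangian
\[
L(x,\mu) \;=\; \sum_{i=1}^{\n} \log x_i - \mu^{T}(Ax - \ones_{\m}) \;=\; \sum_{i=1}^{\n}\log x_i - \innp{A^{T}\mu, x} + \innp{\ones_{\m},\mu}.
\]
Maximizing over $x \in \Rp^{\n}$ at fixed $\mu$ with $A^{T}\mu > 0$ (outside this cone the supremum is $+\infty$ and $\mu$ is irrelevant for the min), the first-order condition $1/x_i = (A^{T}\mu)_i$ gives $x_i^{\star}(\mu) = 1/(A^{T}\mu)_i$. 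Substituting yields the dual function
\[
G(\mu) \;=\; -\sum_{i=1}^{\n} \log (A^{T}\mu)_i \;-\; \n \;+\; \innp{\ones_{\m},\mu},
\]
and the Lagrange dual is $\min_{\mu \in \Rp^{\m},\ A^{T}\mu>0} G(\mu)$.

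Next, I would eliminate the scale of $\mu$. Write $\mu = t\lambda$ with $t > 0$ and $\lambda \in \simplex{\m}$; this is a bijection between $\{\mu\in\Rp^{\m}\setminus\{0\}\}$ and $(0,\infty)\times\simplex{\m}$. Using $\log(A^{T}(t\lambda))_i = \log t + \log(A^{T}\lambda)_i$ and $\innp{\ones_{\m},t\lambda}=t$, one gets
\[
G(t\lambda) \;=\; -\sum_{i=1}^{\n}\log(A^{T}\lambda)_i \;-\; \n\log t \;-\; \n \;+\; t.
\]
Differentiating in $t$ gives $t^{\star}=\n$, so the inner minimum equals
\[
-\sum_{i=1}^{\n}\log(A^{T}\lambda)_i \;-\; \n\log\n,
\]
which is exactly $\g(\lambda)$. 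Taking the outer minimum over $\lambda \in \simplex{\m}$ recovers the stated dual.

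Two minor points to address along the way: the condition $A^{T}\mu > 0$ translates, after the rescaling, to $A^{T}\lambda > 0$ on $\simplex{\m}$, which holds on the effective domain of $\g$ (outside this set $\g \equiv +\infty$ by convention, so it does not affect the minimum); and the normalization \eqref{eq:normalization_of_A}, together with the inclusion of $\ecanonical[i] \in \P$ for all $i \in [\n]$, ensures $A^{T}\lambda > 0$ for every $\lambda$ in the relative interior of $\simplex{\m}$, so the minimum is attained. The only mildly delicate step is justifying the substitution of $x^{\star}(\mu)$ (ensuring we stay in the open domain $\Rp^{\n}_{>0}$ where $\log$ is finite), but this is standard since for any $\mu$ with $A^{T}\mu>0$ the optimum is interior.
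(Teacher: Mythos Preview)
Your proof is correct and follows essentially the same route as the paper: form the Lagrangian, eliminate $x$ by the first-order condition $x_i = 1/(A^{T}\mu)_i$, then decompose $\mu = t\lambda$ with $\lambda\in\simplex{\m}$ and optimize the scalar $t$ to get $t=\n$. The only differences are cosmetic (your additional remarks on the domain $A^{T}\mu>0$ and on attainment are not in the paper's proof but are harmless extras).
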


\begin{proof} By definition of Lagrangian duality, the dual of \eqref{eq:primal_problem} is

  \begin{align*} \begin{aligned}
    \min_{y\geq 0} \left\{\max_{x\geq 0} \left\{ \sum_{i=1}^n \log(x_i) - y^T(Ax-\ones_m)\right\}\right\} =  \\
    \min_{y\geq 0} \left\{ \innp{y, \ones_m} + \max_{x\geq 0} \left\{ \sum_{i=1}^n \left( \log (x_i) - (y^T A)_i x_i \right)\right\}\right\}
  \end{aligned} \end{align*}

  We can explicitly compute the $x_i$ variables by differentiation, to obtain $x_i= (y^TA)_i^{-1}$. Thus the problem is equivalent to:

  \begin{align*}\begin{aligned}
    \min_{y\geq 0} \left\{\innp{y,\ones_m} + \sum_{i=1}^n\left(-\log(y^TA)_i -1\right)\right\}.
  \end{aligned}\end{align*}

  Now, let $\lambda = \frac{1}{\innp{y,\ones_m}} y$ and $t= \innp{y,\ones_m}$. Then, $y=t \lambda$ with $\lambda\in\simplex{\m}$ and we write the problem as:

  \begin{align*}\begin{aligned}
    \min_{\lambda\in\simplex{\m}} \min_{t\geq 0} \left\{t - \sum_{i=1}^n \log(\lambda^TA)_i -n\log t -n \right\}.
  \end{aligned}\end{align*}

  The value of $t$ minimizing $t-n\log t$ is $t=n$ by differentiation so the problem reduces to

\begin{align}
    \min_{\lambda \in \simplex{\m}} \left\{ -\sum_{i=1}^{\n} \log (A^T \lambda)_i - \n \log \n  \right\}.
\end{align}

\end{proof}

\section[Missing proofs from Section \ref{sec:primal}]{Missing proofs from \cref{sec:primal}}\label{app:proofs_of_primal_section}
{

\begin{proof}\textbf{of \cref{prop:optimizing_the_smoothened_function_is_enough}.} \linkofproof{prop:optimizing_the_smoothened_function_is_enough}
    Recall $\beta=\frac{\epsilon}{6n\log(2mn^2 /\epsilon)}$ and that $\log(\cdot)$ is the natural logarithm. We will prove the proposition in three steps:
    \begin{enumerate}[label=\arabic*)]
        \item $\fhat(\xasthat)- \fhat(\xrast) \leq \fhat(\xasthat)+ \fr(\xrast) \leq 3\epsilon$.
        \item  The point $x_r^{\epsilon}$ satisfies $A\exp(x_r^{\epsilon}) \leq (1+\epsilon/\n)\ones_{\m}$.
        \item The point $\hat{u} = x_r^{\epsilon}-\log(1+\epsilon/\n)\ones_{\n}$ satisfies $\f(\xast)-\f(u) \leq \f(\exp(\xasthat))-\f(\exp(\hat{u}))  = \fhat(\xasthat) - \fhat(\hat{u}) \leq 5\epsilon$ and $A\exp(\hat{u}) \leq \ones_{\m}$.
    \end{enumerate}

    For the first part, take the point $x=\log(1-\epsilon/\n)\ones_{\n} + \xasthat \in \B$. It satisfies $A\exp(x) \leq (1-\epsilon/\n)\ones_{\m}$, because $\xasthat$ is feasible. Thus
    \begin{equation} \label{eq:barrier_is_low_inside}
        \frac{\beta}{1+\beta}\sum_{i=1}^{\m} (A\exp(x))_i^{\frac{1+\beta}{\beta}} \leq \m(1-\epsilon/\n)^{1/\beta} \leq \m\left(\frac{\epsilon}{2\m\n^2}\right)^6 \leq \epsilon.
    \end{equation}
    We used $\frac{\beta}{1+\beta} \leq 1$, $\frac{1+\beta}{\beta} \geq \frac{1}{\beta}$, and $(1-\epsilon/\n)^{\frac{\n}{\epsilon}} \leq e^{-1}$. Consequently, we have
\begin{align}
        \begin{aligned}\label{eq:reduction_to_smooth_part_one}
           \fhat(\xasthat) - \fhat(\xrast) &\circled{1}[\leq] \fhat(\xasthat) + \fr(\xrast)
           \circled{2}[\leq] \fhat(\xasthat) + \fr(x) \\
            &= \innp{\ones_{\n}, \xasthat} + \left(-\innp{\ones_{\n}, \log(1-\epsilon/\n)\ones_{\n}+ \xasthat} + \frac{\beta}{1+\beta}\sum_{i=1}^{\m} (A\exp(x))_i^{\frac{1+\beta}{\beta}} \right)  \\
           &\circled{3}[\leq] \n\log(\frac{1}{1-\epsilon/\n}) + \epsilon
           \circled{4}[\leq] 3\epsilon.  
       \end{aligned}
    \end{align}
    Above, $\circled{1}$ is true by definition of $\fr$ being $-\fhat$ plus a non-negative regularizer. The point $x$ is in $\B$ and $\xrast = \argmin_{x\in \B}\{\fr(x)\}$ so we have $\circled{2}$. Inequality $\circled{3}$ uses \eqref{eq:barrier_is_low_inside} and $\circled{4}$ uses $\log(x) \leq x-1$ and $\epsilon/\n \leq 1/2$.

    For the second part, suppose for the moment that there is some $i$ such that $(A\exp(x_r^{\epsilon}))_i > 1+\epsilon/\n$. In that case
    \[
        (A\exp(x_r^{\epsilon}))_i^{\frac{1+\beta}{\beta}} \geq (1+\epsilon/\n)^{(2\n/\epsilon) \cdot 3\log(2\m\n^2/\epsilon)} \geq  \left( \frac{2\m\n^2}{\epsilon}\right)^3, 
    \]
    since $(1+\epsilon/\n)^{2\n/\epsilon} \geq e$ when $\epsilon/\n \leq 1/2$. We have $x_r^{\epsilon}\in\B$ so $\fr(x_r^{\epsilon}) \geq -\innp{\ones_{\n}, x_r^{\epsilon}} + \frac{\beta}{1+\beta}\left( \frac{2\m\n^2}{\epsilon}\right)^3 \geq \frac{\beta}{2}\left( \frac{2\m\n^2}{\epsilon}\right)^3$. On the other hand, it holds for the point $y = -\log(\m\n)\ones_{\n}$ that
    \begin{align}
       \begin{aligned}
           \fr(y) &=    \n\log(\m\n) + \frac{\beta}{1+\beta}\sum_{i=1}^{\m} (A\exp(y))_i^{\frac{1+\beta}{\beta}}  \\
                   &\circled{1}[\leq] \n\log(\m\n) + \m\left(1/\m\right)^{\frac{1+\beta}{\beta}}
                   \leq \n\log(\m\n) + 1 \\
                   &\circled{2}[<] \frac{\beta}{2}\left(\frac{2\m\n^2}{\epsilon}\right)^3 -\epsilon
                   < \fr(x_r^{\epsilon}) -\epsilon,
       \end{aligned}
    \end{align}
    contradicting the assumption $\fr(x_r^{\epsilon})-\fr(\xrast) \leq \epsilon$, as we would obtain $\epsilon < \fr(x_r^{\epsilon}) - \fr(y) \leq \fr(x_r^{\epsilon}) - \fr(\xrast)$, since $y\in \B$. So it must be $(A\exp(x_r^{\epsilon}))_i \leq 1+\epsilon/\n$. Inequality $\circled{1}$ uses that the maximum entry of $A$ is $1$, and $\frac{\beta}{1+\beta} \leq 1$. One can show $\circled{2}$ by proving the stronger inequality that results from substituting $\beta$ by $\epsilon/(6\n\cdot 2\m\n^2/\epsilon)$, which is a lower value. Computing derivatives in both sides shows that this inequality holds if it does for $\m = 1$ and $\epsilon=\frac{\n}{2}$, and the latter is easy to check. 

    For the third part, we have $A\exp(\hat{u}) = A \frac{\exp(x_r^{\epsilon})}{1+\epsilon/\n} \leq \ones_{\m}$. And finally, putting all together we obtain
    \begin{align}
       \begin{aligned}
           \fhat(\xasthat) - \fhat(\hat{u}) &= \fhat(\xasthat) - \fhat(x_r^{\epsilon}) + \n\log(1+\epsilon/\n) \\
           &\leq \fhat(\xasthat) + \fr(x_r^{\epsilon}) + \n\log(1+\epsilon/\n) \\
           &\leq \fhat(\xasthat) + \fr(\xrast) + \epsilon + \n\log(1+\epsilon/\n) \\
           &\leq 4\epsilon + \n\log(1+\epsilon/\n)
           \leq 5\epsilon.
       \end{aligned}
    \end{align}
\end{proof}

\begin{lemma} \label{lemma:lower_bound_on_coordinates_of_x_ast}
    Let $A$ satisfy the normalization in \eqref{eq:normalization_of_A}, and let $\xast$ be the optimizer of Problem~\eqref{eq:primal_problem}. Then $\xast[i] \geq 1/\n$, for all $i\in [\n]$.
\end{lemma}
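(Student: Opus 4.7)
The plan is to invoke the KKT optimality conditions for Problem~\eqref{eq:primal_problem}, which are necessary and sufficient here because $f$ is strictly concave on the interior of the positive orthant and the feasible set is a nonempty, compact, convex polytope (so $\xast$ exists and is unique and interior in the positivity constraints). This reduces the claim to a short algebraic identity together with the normalization $\max_i A_{ij} = 1$.

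First I would write down stationarity: there exists $\lambdaast \in \Rp^{\m}$ with
\[
    \frac{1}{\xast[j]} = (A^T \lambdaast)_j = \sum_{i=1}^{\m} \lambdaast_i A_{ij}, \quad \text{for all } j \in [\n],
\]
together with complementary slackness $\lambdaast_i((A\xast)_i - 1) = 0$. Multiplying the stationarity identity by $\xast[j]$ and summing over $j$,
\[
    \n \;=\; \sum_{j=1}^{\n} \xast[j] \cdot \tfrac{1}{\xast[j]} \;=\; \sum_{j=1}^{\n} \xast[j] (A^T \lambdaast)_j \;=\; \langle \lambdaast, A\xast \rangle \;=\; \sum_{i=1}^{\m} \lambdaast_i,
\]
where the last equality uses complementary slackness to replace $(A\xast)_i$ by $1$ on the support of $\lambdaast$. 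Hence $\sum_i \lambdaast_i = \n$.

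Finally, using the normalization~\eqref{eq:normalization_of_A}, which in particular gives $A_{ij} \leq 1$ for all $i,j$ (the nonnegativity of $A$ together with column maxima equal to $1$), I would conclude
\[
    \frac{1}{\xast[j]} \;=\; \sum_{i=1}^{\m} \lambdaast_i A_{ij} \;\leq\; \sum_{i=1}^{\m} \lambdaast_i \;=\; \n,
\]
which rearranges to the desired bound $\xast[j] \geq 1/\n$. The only subtlety is justifying existence of KKT multipliers and unique interior optimality; this follows because the feasible point $(1/\n)\ones_{\n}$ is strictly positive (so $-\log x_j$ is finite there) and strictly feasible whenever at least one column has $\sum_j A_{ij} < \n$, and by a limiting/perturbation argument otherwise. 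I do not expect any serious obstacle here: the real content is the column-normalization hypothesis, which is exactly what turns the trivial identity $\sum_i \lambdaast_i = \n$ into the coordinatewise lower bound.
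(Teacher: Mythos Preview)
Your proof is correct but takes a different route from the paper's. The paper argues directly via the primal first-order optimality (variational) inequality: since each $A_{ij}\le 1$ under the normalization, every canonical vector $\ecanonical[j]$ is feasible, and then $\innp{\nabla f(\xast),\ecanonical[j]-\xast}=1/\xast[j]-n\le 0$ gives the bound immediately. You instead pass through the KKT system, establish the identity $\sum_i \lambdaast_i=n$ from stationarity plus complementary slackness, and bound $1/\xast[j]=(A^T\lambdaast)_j\le \sum_i\lambdaast_i=n$ using $A_{ij}\le 1$.

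Both arguments hinge on exactly the same content of the normalization (column maxima equal to $1$), but the paper's is shorter and avoids any discussion of constraint qualification. Your detour through duality does have a payoff elsewhere in the paper: the identity $\sum_i\lambdaast_i=n$ is precisely what underlies the parametrization of the dual over $\simplex{\m}$ in \cref{lem:lagrange} and the centroid map $\c(\cdot)$ in \cref{sec:dual}. One minor remark: your closing worry about Slater/limiting arguments is unnecessary, since KKT multipliers always exist for convex programs with affine constraints (no constraint qualification is needed), and in any case $(1/(2n))\ones_n$ is strictly feasible because $A_{ij}\le 1$ forces each row sum to be at most $n$.
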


We now present the proof of \cref{lemma:iterates_remain_in_B}, which guarantees the iterates of \cref{alg:1_fair_packing} remain in the box $\B$.

\begin{proof}\textbf{of \cref{lemma:iterates_remain_in_B}.} \linkofproof{lemma:iterates_remain_in_B}
    For all $k \geq 0$, we have $\z[k] \in \B$ by definition. If we have that $\y[k-1] \in \B$, then $\x[k]\in\B$ since $\x[k]$ is a convex combination of $\y[k-1]$ and $\z[k-1]$. So we only have to prove that for all $k\geq 0$, we have $\y[k] \in \B$. We prove by induction that, for $k\geq 1$, it holds that $\y[k]$ is a convex combination of $\{\z[i]\}_{i=0}^k$ and that the weight of $\z[k]$ in this convex combination is $\frac{1}{\etak \L}$. Firstly, we have $\y[1] = (1-\frac{1}{\etak[1]\L}) \z[0] + \frac{1}{\etak[1]\L}\z[1]$ (recall $\x[0]=\z[0]$). Now assuming our property holds up to $k-1$, use the definition of $\y[k]$ and $\x[k]$, to compute $\y[k] = \tau \z[k-1] + (1-\tau)\y[k-1] + \frac{1}{\etak \L}(\z[k]-\z[k-1])$. This is an affine combination of the $\z[i]$'s, by induction hypothesis. Moreover, the weights add up to $1 = \tau + (1-\tau) +\frac{1}{\etak \L}-\frac{1}{\etak \L}$, and the weight on $\z[k]$ is $\frac{1}{\etak \L}$. So we only have to prove the weight on $\z[k-1]$ is $\geq 0$ in order to show that we indeed have a convex combination and not just an affine one. By induction hypothesis, we know the weight on $\z[k-1]$ coming from $\y[k-1]$ is $\frac{1}{\etak[k-1]\L}$. Hence, the weight on $\z[k-1]$ is $\tau + (1-\tau)\frac{1}{\etak[k-1]\L}-\frac{1}{\etak \L} = \tau > 0$, where the equality uses the definition of $\etak$.
\end{proof}

\begin{proof}
    The normalization ensures that $\ecanonical$ are feasible points, for $i\in [\n]$. That is, $Ae_i \leq \ones_{m}$ because each $A_{ij} \leq 1$. Since $\xast$ is the maximizer of Problem~\eqref{eq:primal_problem}, by the first order optimality condition we have $\innp{\nabla \f(\xast), x - \xast} \leq 0$, for any feasible point $x$. Suppose there is a coordinate $i\in[\n]$ such that $\xast[i] < \frac{1}{\n}$. Then, $\innp{\nabla \f(\xast), e_i - \xast} = \frac{1}{\xast[i]} - \sum_{j=1}^{\n} \xast[j]/\xast[j] > 0$, which is a contradiction.
\end{proof}

\begin{lemma}[Mirror Descent Lemma] \label{lemma:mirror_descent_lemma}
    Let $\X \subseteq \R^{\n}$ be a closed convex set and let $\psi:\X \to \R$ be a $1$-strongly convex map with respect to $\norm{\cdot}$. Let $\norm{\cdot}_\ast$ be the dual norm to $\norm{\cdot}$ and let $\newtarget{def:losses_primal}{\lossk}  \in \R^{\n}$ be an arbitrary loss vector. Given $z^{(k-1)} \in \X$, let $z^{(k)} \defi \argmin_{z\in\X}\{\breg(z, z^{(k-1)}) + \eta\innp{\lossk , z}\}$. Then, for all $u \in \X$ we have
    \begin{enumerate}[label=\alph*)]
        \item $ \eta \innp{\lossk , z^{(k-1)}-u}  \leq \frac{\eta^2}{2}\norm{\lossk }^2_\ast + \breg(u, z^{(k-1)})- \breg(u, z^{(k)})$.
        \item $\eta \innp{\lossk , z^{(k-1)}-u}  \leq \eta\innp{\lossk , z^{(k-1)}-z^{(k)}} + \breg(u, z^{(k-1)})- \breg(u, z^{(k)})$.\label{lemma:enum:mirror_descent_non_standard_lemma}
    \end{enumerate}
\end{lemma}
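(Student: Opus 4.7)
The plan is to apply the standard mirror descent template: first use the first-order optimality condition for $z^{(k)}$, then invoke the three-point identity for Bregman divergences, and finally split the analysis for parts (a) and (b).

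First I would write down the first-order optimality condition for $z^{(k)} = \argmin_{z\in \X}\{\breg(z, z^{(k-1)}) + \eta \innp{\lossk, z}\}$. Since the objective is convex and differentiable in $z$, this gives, for every $u \in \X$:
\[
    \innp{\nabla\psi(z^{(k)}) - \nabla\psi(z^{(k-1)}) + \eta \lossk,\ u - z^{(k)}} \geq 0,
\]
which can be rearranged to $\eta\innp{\lossk,\ z^{(k)} - u} \leq \innp{\nabla\psi(z^{(k-1)}) - \nabla\psi(z^{(k)}),\ u - z^{(k)}}$.

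Next, I would invoke the three-point identity for Bregman divergences, namely
\[
    \innp{\nabla\psi(z^{(k-1)}) - \nabla\psi(z^{(k)}),\ u - z^{(k)}} = \breg(u, z^{(k-1)}) - \breg(u, z^{(k)}) - \breg(z^{(k)}, z^{(k-1)}),
\]
which is a direct expansion from the definition. Combining with the previous line and adding $\eta\innp{\lossk,\ z^{(k-1)} - z^{(k)}}$ to both sides yields
\[
    \eta\innp{\lossk,\ z^{(k-1)} - u} \leq \eta\innp{\lossk,\ z^{(k-1)} - z^{(k)}} + \breg(u, z^{(k-1)}) - \breg(u, z^{(k)}) - \breg(z^{(k)}, z^{(k-1)}).
\]
Dropping the non-positive term $-\breg(z^{(k)}, z^{(k-1)})$ immediately gives part \ref{lemma:enum:mirror_descent_non_standard_lemma}, i.e., part (b).

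For part (a), I would bound the term $\eta\innp{\lossk,\ z^{(k-1)} - z^{(k)}}$ via the generalized Cauchy-Schwarz inequality and Young's inequality: $\eta\innp{\lossk,\ z^{(k-1)} - z^{(k)}} \leq \eta\norm{\lossk}_\ast \norm{z^{(k-1)}-z^{(k)}} \leq \frac{\eta^2}{2}\norm{\lossk}_\ast^2 + \frac{1}{2}\norm{z^{(k-1)}-z^{(k)}}^2$. By $1$-strong convexity of $\psi$ we have $\breg(z^{(k)}, z^{(k-1)}) \geq \frac{1}{2}\norm{z^{(k-1)}-z^{(k)}}^2$, so the last two summands of the inequality above cancel with $-\breg(z^{(k)}, z^{(k-1)})$, leaving the claimed bound $\frac{\eta^2}{2}\norm{\lossk}_\ast^2 + \breg(u, z^{(k-1)}) - \breg(u, z^{(k)})$. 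No step is truly delicate; the only subtlety is making sure the sign conventions of the optimality condition and the three-point identity match the Bregman divergence convention used in the paper, after which both parts are obtained simultaneously from the displayed inequality before the final split.
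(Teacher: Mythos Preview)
Your proof is correct and follows essentially the same approach as the paper: both use the first-order optimality condition for $z^{(k)}$, the three-point (triangle) identity for Bregman divergences, drop $-\breg(z^{(k)}, z^{(k-1)})$ to obtain part (b), and combine Cauchy--Schwarz, Young's inequality, and $1$-strong convexity of $\psi$ to obtain part (a). The only cosmetic difference is the order in which the decomposition $z^{(k-1)}-u = (z^{(k-1)}-z^{(k)}) + (z^{(k)}-u)$ is introduced.
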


\begin{proof}
    We note that, by definition, we have $\frac{\partial}{\partial x}\breg(x,y) = \nabla \psi(x)-\nabla \psi(y)$. The lemma is due to
    \begin{align*}
       \begin{aligned}
           \innp{\eta \lossk , \z[k-1]-u} &= \innp{\eta \lossk , \z[k-1]-\z} + \innp{\eta \lossk , \z-u} \\
           &\circled{1}[\leq] \innp{\eta \lossk , \z[k-1]-\z} -\innp{\nabla\psi(z^{(k)})-\nabla\psi(z^{(k-1)}), \z-u} \\
           &\circled{2}[=] \innp{\eta \lossk , \z[k-1]-\z} - \breg(z^{(k)}, z^{(k-1)}) + \breg(u, z^{(k-1)}) -\breg(u, z^{(k)}) \\
           &\circled{3}[\leq] \frac{\eta^2}{2}\norm{\lossk }^2_\ast + \breg(u, z^{(k-1)})- \breg(u, z^{(k)}).
       \end{aligned}
    \end{align*}
    Inequality $\circled{1}$ comes from the first-order optimality condition of the definition of $z^{(k)}$, that is, $\innp{\nabla\psi(z^{(k)})-\nabla\psi(z^{(k-1)}) + \eta\lossk , u-z^{(k)}} \geq 0$ for all $u\in \X$. $\circled{2}$ is the triangle equality of Bregman divergences, and can be easily checked by using the definition.

    If we drop the term $- \breg(z^{(k)}, z^{(k-1)}) $ after $\circled{2}$, we obtain part $b)$ of this lemma. $\circled{3}$ leads to part $a)$, which is the classical mirror descent lemma. It uses the bound $\breg(z^{(k)}, z^{(k-1)}) \geq \frac{1}{2}\norm{z^{(k)}- z^{(k-1)}}^2$, which holds due to the strong convexity of $\psi$. And then we applied the inequality $\innp{v, w} - \frac{1}{2}\norm{w}^2 \leq \frac{1}{2}\norm{v}_{\ast}^2$ for $v, w \in \R^{\n}$, that holds by Cauchy-Schwarz and $\norm{v}_\ast\cdot\norm{w} \leq  \frac{1}{2}\norm{v}_{\ast}^2 + \frac{1}{2}\norm{w}^2$.

\end{proof}

In the next lemma, we prove a crucial generalization of \citep[Lemma 3.1]{diakonikolas2020fair}, which allows for showing our descent \cref{lemma:descent_lemma_coord_descent}.

\begin{lemma}\label{lemma:fair_packing_descent_lemma}
    Let $c \in [-1,1]^{\n}$ and let $\Delta \in \R^{\n}$ be defined as $\Delta_{j}=-\frac{c_{j} \beta}{4(1+\beta)} \truncgrad[j](x)$, for $j\in[\n]$. Then
\[
    \fr(x+\Delta)-\fr(x) \leq \sum_{j=1}^{\n} (1-\frac{c_j}{2}) \Delta_j  \nabla_j \fr(x).
\]
\end{lemma}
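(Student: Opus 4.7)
}
The plan is to compute the change $\fr(x+\Delta)-\fr(x)$ exactly via the linear and barrier components of $\fr$, bound the barrier increment by a first-plus-second-order expansion in $\Delta$, and then exploit the specific choice of $\Delta_j$ together with the truncation in $\truncgrad[j](x)$ to absorb the quadratic error into the linear term $\Delta_j\nabla_j\fr(x)$.

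First I would factor the barrier. Writing $p\defi(1+\beta)/\beta$, for each row $i$ one has $(A\exp(x+\Delta))_i=\sum_j a_{ij}\exp(x_j)\exp(\Delta_j)$, so introducing the probability weights $\mu_{ij}\defi a_{ij}\exp(x_j)/(A\exp(x))_i$ (which sum to $1$ in $j$) gives $(A\exp(x+\Delta))_i=(A\exp(x))_i\sum_j\mu_{ij}\exp(\Delta_j)$. Since $p\ge 1$, Jensen's inequality applied to the convex function $t\mapsto t^p$ yields $(A\exp(x+\Delta))_i^p\le (A\exp(x))_i^p\sum_j\mu_{ij}\exp(p\Delta_j)$, i.e., $(A\exp(x+\Delta))_i^p\le (A\exp(x))_i^{1/\beta}\sum_j a_{ij}\exp(x_j)\exp(p\Delta_j)$. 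Summing over $i$ and using the identity $\sum_i(A\exp(x))_i^{1/\beta}a_{ij}\exp(x_j)=1+\nabla_j\fr(x)$, which reads off directly from the formula for $\nabla_j\fr$, gives the clean bound
\[
\sum_i\bigl[(A\exp(x+\Delta))_i^p-(A\exp(x))_i^p\bigr]\le \sum_j(\exp(p\Delta_j)-1)(1+\nabla_j\fr(x)).
\]

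Next I would linearize the exponential. Because $|c_j|\le 1$ and $|\truncgrad[j](x)|\le 1$, the definition of $\Delta_j$ gives $|p\Delta_j|=|c_j\truncgrad[j](x)|/4\le 1/4$, so the elementary inequality $e^y\le 1+y+y^2$ (valid for all $y\le 1$) yields $\exp(p\Delta_j)-1\le p\Delta_j+(p\Delta_j)^2$. Multiplying by $\beta/(1+\beta)$ and using the identities $\tfrac{\beta}{1+\beta}\,p=1$ and $\tfrac{\beta}{1+\beta}p^2=p=(1+\beta)/\beta$, and combining with the linear term $-\sum_j\Delta_j$ that comes from $-\langle\ones_{\n},\Delta\rangle$, the bound collapses to
\[
\fr(x+\Delta)-\fr(x)\le\sum_j\Delta_j\nabla_j\fr(x)+\frac{1+\beta}{\beta}\sum_j\Delta_j^{\,2}(1+\nabla_j\fr(x)).
\]

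Now comes the step that makes the truncation essential. Substituting $\Delta_j=-c_j\beta\,\truncgrad[j](x)/(4(1+\beta))$ into the quadratic term turns $\tfrac{1+\beta}{\beta}\Delta_j^{\,2}$ into exactly $-\tfrac{c_j}{4}\Delta_j\,\truncgrad[j](x)$, so the desired conclusion reduces to showing, for every $j$,
\[
\truncgrad[j](x)^{2}\bigl(1+\nabla_j\fr(x)\bigr)\le 2\,\truncgrad[j](x)\,\nabla_j\fr(x).
\]
Here I would split into the two cases defining the truncation. If $\nabla_j\fr(x)\le 1$ then $\truncgrad[j](x)=\nabla_j\fr(x)$ and the inequality reads $\nabla_j\fr(x)^2(\nabla_j\fr(x)-1)\le 0$, which holds. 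If $\nabla_j\fr(x)>1$ then $\truncgrad[j](x)=1$ and the inequality reads $1+\nabla_j\fr(x)\le 2\nabla_j\fr(x)$, which also holds. The main obstacle is keeping track of the constants so that the quadratic error lines up \emph{exactly} with $\tfrac{c_j}{2}\Delta_j\nabla_j\fr(x)$; this is why the coefficient $\beta/(4(1+\beta))$ in the definition of $\Delta_j$ is the sharp choice, and why the bound $|p\Delta_j|\le 1/4$ is tight enough to justify the second-order expansion of the exponential.
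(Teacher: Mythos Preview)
Your proof is correct and reaches the same destination as the paper, but by a genuinely different and more elementary route. The paper argues via a second-order Taylor expansion $\fr(x+\Delta)-\fr(x)=\langle\nabla\fr(x),\Delta\rangle+\tfrac12\Delta^{\top}\nabla^{2}\fr(x+t\Delta)\Delta$, then controls the Hessian entrywise at the intermediate point (showing each entry stays within a factor of two of its value at $x$, using $|\Delta_j|\le\tfrac{\beta}{4(1+\beta)}$), and finally applies a Cauchy--Schwarz step on an $n^2$-dimensional vector to collapse the cross terms. You instead bound the barrier increment directly: Jensen for the convex map $t\mapsto t^{p}$ with weights $\mu_{ij}=a_{ij}e^{x_j}/(A e^{x})_i$ gives $(Ae^{x+\Delta})_i^{p}\le (Ae^{x})_i^{1/\beta}\sum_j a_{ij}e^{x_j}e^{p\Delta_j}$, and then the scalar inequality $e^{y}\le 1+y+y^{2}$ (valid since $|p\Delta_j|\le 1/4$) does the rest. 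Both routes land on the identical intermediate bound
\[
\fr(x+\Delta)-\fr(x)\le\sum_j\Delta_j\nabla_j\fr(x)+\tfrac{1+\beta}{\beta}\sum_j\Delta_j^{2}\bigl(1+\nabla_j\fr(x)\bigr),
\]
and from there the two-case analysis on the truncation is the same (this is the paper's step~$\circled{6}$). Your approach avoids computing the Hessian and the somewhat delicate Cauchy--Schwarz/sign bookkeeping, at the cost of being specific to this barrier; the paper's Hessian-control argument is closer in spirit to a local-smoothness statement and may generalize more readily to other regularizers.
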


\begin{proof}\label{proof_of_descent_lemma}
    By using a Taylor expansion, there is a $t \in [0,1]$ such that
    \begin{equation}\label{eq:taylor_on_log_fr}
        \fr(x+\Delta)-\fr(x) = \innp{\nabla \fr(x), \Delta} + \frac{1}{2} \Delta^\top \nabla^2 \fr(x+t\Delta) \Delta.
    \end{equation}
    The gradient and Hessian of $\fr$ are given by
    \begin{align} \label{eq:value_of_grad_hessian_log_fr}
     \begin{aligned}
         \nabla_j \fr(x) &= -1  + \sum_{i=1}^{\m} (A\exp(x))_i^{\frac{1}{\beta}} a_{ij} \exp(x_j),\\
         \nabla^2_{jk} \fr(x) &= \ones_{\{j=k\}} \sum_{i=1}^{\m} (A\exp(x))_i^{\frac{1}{\beta}} a_{ij} \exp(x_j) \\
                 & \quad + \frac{1}{\beta}\sum_{i=1}^{\m} (A\exp(x))_i^{\frac{1}{\beta}-1} a_{ij}\exp(x_j) a_{ik}\exp(x_k).
     \end{aligned}
    \end{align}
    In order to control how much the function changes, we will require 
    \[
    \frac{1}{2} \nabla^2_{jk} \fr(x) \leq \nabla^2_{jk} \fr(x+t\Delta) \leq 2 \nabla^2_{jk} \fr(x).
    \] 
    We can guarantee the inequality on the right if we guarantee that each summand in the expression above does not grow by more than a factor of $2$, and respectively the one on the left if each summand does not decrease by more than a factor of $2$. Let $\Delta_{\max} \defi \max_{i\in[\n]}\{\Delta_i\}$ and $\Delta_{\min}\defi \min_{i\in[\n]}\{\Delta_i\}$. It suffices to have $\exp(\Delta_{\max})^{\frac{1}{\beta}+1} \leq 2$ and $\exp(\Delta_{\min})^{\frac{1}{\beta}+1} \geq 1/2$. Hence, it suffices to have for all $j \in [n]$, the following:
    \begin{equation}\label{eq:condition_on_delta_to_control_hessian}
         -\frac{\ln 2}{1+\frac{1}{\beta}} \leq \Delta_j \leq \frac{\ln 2}{1+\frac{1}{\beta}}.
    \end{equation}
    In fact, we will use $\Delta_j =-\frac{c_j}{4} \cdot \frac{\beta}{1+\beta}\truncgrad(x)$ for all $j\in[\n]$, which satisfy the condition since $\abs{c_j\truncgrad(x)} \leq 1$. In such a case, by using the function $\newtarget{def:sign}{\sign}(x)=1$ if $x \geq 0$ and $-1$ otherwise, we have:
    \begin{align} \label{eq:bound_on_hessian}
     \begin{aligned}
         \frac{1}{2} &\Delta^\top \nabla^2 \fr(x+t\Delta) \Delta \circled{1}[\leq] \sum_{j=1}^{\n} \sum_{i=1}^{\m} (A\exp(x))_i^{\frac{1}{\beta}}\Delta_j^2 a_{ij} \exp(x_j) \\
         &\quad + \frac{1}{2\beta}\sum_{i=1}^{\m} (A\exp(x))_i^{\frac{1}{\beta}-1}\sum_{j=1}^{\n}\sum_{k=1}^{\n} \Delta_j\Delta_k a_{ij}a_{ik}\exp(x_j)\exp(x_k)2^{\sign(\Delta_j\Delta_k)} \\
         & \circled{2}[\leq] \sum_{j=1}^{\n} \sum_{i=1}^{\m} (A\exp(x))_i^{\frac{1}{\beta}}\Delta_j^2 a_{ij} \exp(x_j) \\
         &\quad + \frac{1}{2\beta}\sum_{i=1}^{\m} (A\exp(x))_i^{\frac{1}{\beta}-1} \sum_{j=1}^{\n} \Delta_j^2 a_{ij}\exp(x_j) \cdot \sqrt{\sum_{j=1}^{\n}\sum_{k=1}^{\n} a_{ij}a_{ik}\exp(x_j)\exp(x_k) 4^{\sign(\Delta_j\Delta_k)}} \\
         &\circled{3}[\leq] \frac{\beta+1}{\beta} \sum_{j=1}^{\n} \sum_{i=1}^{\m} (A\exp(x))_i^{\frac{1}{\beta}}\Delta_j^2 a_{ij} \exp(x_j) \\
         &\circled{4}[=] \frac{\beta+1}{\beta} \sum_{j=1}^{\n} \Delta_j^2 (\nabla_j \fr(x)+1) \\
         &\circled{5}[=] \sum_{j=1}^{\n} -\frac{c_j}{4}\Delta_j \truncgrad(x) (\nabla_j \fr(x)+1) \\
         &\circled{6}[\leq]  -\sum_{j=1}^{\n}\frac{c_j}{2} \Delta_j \nabla_j \fr(x). \\
     \end{aligned}
    \end{align}
    We used the inequalities $\nabla^2_{jk} \fr(x+t\Delta) \leq 2 \nabla^2_{jk} \fr(x)$ and $-\nabla^2_{jk} \fr(x+t\Delta) \leq -2^{-1}\nabla^2_{jk} \fr(x)$ in $\circled{1}$. We used Cauchy-Schwarz in $\circled{2}$ with the $\n^2$-dimensional vectors 
    \[
        (\Delta_j\Delta_k\sqrt{a_{ij}a_{ik}\exp(x_j)\exp(x_k)})_{j,k\in[n]} \ \ \text{ and } \ \ (2^{\sign(\Delta_j\Delta_k)}\sqrt{a_{ij}a_{ik}\exp(x_j)\exp(x_k)})_{j,k\in[n]},
    \] 
    in order to bound the last factor, so that the two first lines of the right hand side become proportional after bounding $4^{\sign(\Delta_j\Delta_k)} \leq 4$ in $\circled{3}$. In $\circled{3}$, we also grouped these terms. In $\circled{4}$, we used the definition of the gradient. In $\circled{5}$, we used the value of $\Delta$. Finally, $\circled{6}$ is a direct consequence of the truncated gradient definition (one can check the inequality for the three cases in $\nabla_j \fr(x)\in\{[-1, 0), [0,1], (1, \infty)\}$, while taking into account the sign of $\Delta_j$).

    Now, substituting into \eqref{eq:taylor_on_log_fr} we obtain:
    \[
        \fr(x+\Delta)-\fr(x) \leq \sum_{j=1}^{\n} \left(1-\frac{c_j}{2}\right) \Delta_j\nabla_j \fr(x).
    \]
\end{proof}

\begin{proof}\textbf{of \cref{lemma:gradient_descent_compensates_packing_regret}.}\label{proof:gradient_descent_compensates_packing_regret} \linkofproof{lemma:gradient_descent_compensates_packing_regret}
    It is enough to show that for all $i \in [\n]$ we have
    \begin{equation}\label{eq:coordinate_subproblem_of_coupling_lemma}
        \etak\nuk[k][i]( \z[k-1][i]-u_i) + \etak[k][2]\L\truncgrad[i](\x)(\x[k][i]-\y[k][i]) \leq \frac{3}{2}\etak\L \nabla_i \fr(\x)( \x[k][i]-\y[k][i] )
    \end{equation}
    because then we can conclude with
    \begin{align}
     \begin{aligned}
         \innp{\etak\nuk, \z[k-1]-u} + \etak[k][2]\L\innp{\truncgrad[ ](\x), \x-\y} &\circled{1}[\leq] \frac{3}{2}\etak \L \innp{\nabla \fr(\x), \x-\y }  \\
         &\circled{2}[\leq] 3\etak \L( \fr(\x)-\fr(\y) ),
     \end{aligned}
    \end{align}
    by adding up \eqref{eq:coordinate_subproblem_of_coupling_lemma} in $\circled{1}$ and \cref{lemma:descent_lemma_coord_descent} in $\circled{2}$. In the analysis of \eqref{eq:coordinate_subproblem_of_coupling_lemma} we exploit the simple but crucial fact that is that the gradient step for each coordinate is independent of the gradient step of other coordinates, due to the constraint set being a box. We present the rest of the proof in three cases. In the cases below, we will use $\nabla_i \fr(\x)(\x[][i]-\y[][i]) \geq 0$, cf. \cref{lemma:descent_lemma_coord_descent}. And also the fact that $\etak \leq 1/4$, as we observe in \eqref{eq:bound_by_one_fourth}.
    \begin{itemize}
        \item If $\nuk[k][i] = 0$ then $\truncgrad[i](\x) =\nabla_{i} \fr(\x) \in [-1,1]$. In such a case, we have
    \begin{align*}
       \begin{aligned}
           \etak\nuk[][i](\z[k-1][i]-u_i) + \etak[k][2]\L\truncgrad[i](\x)(\x[k][i]-\y[k][i]) &= \etak[k][2]\L\nabla_i \fr(\x)(\x[][i]-\y[][i])  \\
           & \leq \frac{3}{2}\etak\L\nabla_i \fr(\x)(\x[][i]-\y[][i]).
       \end{aligned}
    \end{align*}

\item If $\nuk[][i]>0$ and $\z[][i]> -\omega$ then the mirror descent step did not need to project along coordinate $i$, and we have $\z[][i]=\z[k-1][i] - \omega\etak $, and thus $\y[][i]=\x[][i]-\omega/\L$. In this case
    \begin{align*}
       \begin{aligned}
           \ \ &\etak\nuk[][i] (\z[k-1][i]-u_i) + \etak[k][2]\L\truncgrad[i](\x)(\x[][i]-\y[][i]) \\ 
           &\circled{1}[\leq]  \etak \nabla_{i} \fr(\x) \omega + \etak[k][2] \L \nabla_i \fr(\x)(\x[i]-\y[i]) \\
           &= \etak \L \nabla_i \fr(\x)(\x[][i]-\y[][i]) + \etak[k][2] \L \nabla_i \fr(\x)(\x[][i]-\y[][i]) \\
           & \leq \frac{3}{2}\etak\L\nabla_i \fr(\x)(\x[][i]-\y[][i]).
       \end{aligned}
    \end{align*}
            Above, we obtain $\circled{1}$ from $\z[][i] - u_{i} \leq \omega$ because $\z, u \in \B$, the fact that $\nuk[][i]$ and $\x[][i]-\y[][i]$ are positive, and $1=\truncgrad[i](\x) \leq \nabla_{i} \fr(\x)$, $0 < \nuk[][{i}] \leq \nabla_{i} \fr(\x)$.
        \item If $\nuk[][i] > 0$ and $\z[][i] = -\omega$ then
        \begin{align*}
           \begin{aligned}
               \ \ &\etak\nuk[][i] (\z[k-1][i]-u_i) + \etak[k][2]\L\truncgrad[i](\x)(\x[][i]-\y[][i]) \\ 
               &\circled{1}[\leq] \etak \nabla_i \fr(\x)(\z[k-1][i] - \z[][i]) + \etak[k][2]\L\nabla_i \fr(\x)(\x[][i]-\y[][i]) \\
               &\circled{2}[\leq] \etak[k][2] \L \nabla_i \fr(\x)(\x[][i] - \y[][i]) +  \etak[k][2]\L\nabla_{i} \fr(\x)(\x[][i]-\y[][i]) \\
           & \leq \frac{3}{2}\etak\L\nabla_i \fr(\x)(\x[][i]-\y[][i]).
           \end{aligned}
        \end{align*}
    \end{itemize}
    We have $\circled{1}$ because in this case, $u_i - \z[][i]$, $\x[][i] - \y[][i]$, $\nuk[][i]$, $\truncgrad[i](\x)$ are all $\geq 0$. We also used $0<\nuk[][i] < \nabla_{i} \fr(\x)$, $0< \truncgrad[i](\x) < \nabla_{i} \fr(\x)$. In $\circled{2}$, we used $\z[k-1][i]-\z[][i]=\etak\L(\x[][i]-\y[][i])$.
\end{proof}

}

{
\section[Missing proofs from Section \ref{sec:dual}]{Missing proofs from \cref{sec:dual}}\label{sec:app:missing_proofs_dual}

\begin{lemma}[Multiplicative Weights Lemma - Additive and Mult. Guarantee]  \label{lemma:multiplicative_weights}
    Let $\newtarget{def:losses_dual}{\lossk} \in [-\sigma, \tau]^{\tildem }$ be an sequence of $\newtarget{def:dimension_of_MW}{\tildem}$-dimensional arbitrary loss vectors, for $k\in [K]$ and $\sigma, \tau \in \R_{>0}$. Denote $W^{+} = \max\{\sigma, \tau\}, W^{-} = \min\{\sigma, \tau\}$. For a target accuracy $\delta \in (0, 2W^{-}]$, learning rate $\newtarget{def:learning_rate_MW}{\etat} =\frac{\delta}{4W^{-}} \leq \frac{1}{2}$, and initial weights $\Lambdak[1] = \ones_{\tildem} \in \simplex{\tildem}$, inducing an initial uniform distribution, run the following multiplicative weights update rule
    \[
        \newtarget{def:weights_of_MW}{\Lambdak[k+1]} \gets \Lambdak[k] \odot (\ones_{\tildem}- \frac{\etat}{W^{+}}\lossk ),
    \]
    for $k=1, \dots, K\defi\frac{8\sigma \tau \log(\tildem )}{\delta^2}$, where $\odot$ represents the coordinate-wise product. Then, for every $u \in \simplex{\tildem }$ we have
    \[
        \frac{1}{K}\sum_{k=1}^{K} \innp{\lossk , \frac{\Lambdak[k]}{\norm{\Lambdak[k]}_1}} \leq \delta + \frac{1+\signwidth\etat}{K} \sum_{k=1}^{K} \innp{\lossk , u},
    \]
     where $\newtarget{def:sign_indicator_for_max_of_width_parameters}{\signwidth}$ is $1$ if $\tau \geq \sigma$ and $-1$ otherwise.
\end{lemma}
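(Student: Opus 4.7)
The plan is the classical potential-function argument for multiplicative weights, tracking $\Phi^{(k)} \defi \norm{\Lambdak[k]}_1$. From the update rule, $\Phi^{(k+1)} = \Phi^{(k)} \bigl(1 - \tfrac{\etat}{W^+}\innp{\lossk,\, \Lambdak[k]/\Phi^{(k)}}\bigr)$; since $\etat\,\lvert \lossk[k][i] \rvert / W^+ \leq \etat \leq 1/2$, every multiplicative factor on each coordinate lies in $[1/2, 3/2]$, so the weights remain strictly positive throughout. The strategy is to sandwich $\log \Phi^{(K+1)}$ between an upper and a lower bound and rearrange.

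The upper bound uses $\log(1-x)\leq -x$: telescoping gives $\log \Phi^{(K+1)} \leq \log \tildem - \tfrac{\etat}{W^+}\sum_{k=1}^{K} \innp{\lossk,\, \Lambdak[k]/\Phi^{(k)}}$. For the lower bound, I would fix any $u \in \simplex{\tildem}$. Since $\Lambdak[K+1][i] \geq 0$ and $u_i \leq 1$, we have $\Phi^{(K+1)} \geq \sum_i u_i \Lambdak[K+1][i]$, and Jensen's inequality applied to concave $\log$ yields $\log \Phi^{(K+1)} \geq \sum_i u_i \sum_k \log(1 - \tfrac{\etat}{W^+}\lossk[k][i])$. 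I then invoke the two-sided bound $\log(1-x) \geq -x - x^2$ on $x \in [-1/2, 1/2]$, which applies on our range.

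The key technical step is controlling the resulting quadratic $\sum_i u_i\,(\lossk[k][i])^2$. Here I would use the asymmetric identity $(\lossk[k][i])^2 \leq \sigma\tau + (\tau-\sigma)\lossk[k][i]$, which follows from the factorization $(\lossk[k][i]+\sigma)(\tau-\lossk[k][i]) \geq 0$ on $[-\sigma,\tau]$. This converts the quadratic into a constant plus a linear piece in the loss. Combining with the upper bound and dividing through by $K\etat/W^+$ yields
\[
\tfrac{1}{K}\sum_{k=1}^{K}\innp{\lossk,\, \Lambdak[k]/\Phi^{(k)}} \leq \tfrac{W^+ \log \tildem}{K\etat} + \Bigl(1+\tfrac{\etat(\tau-\sigma)}{W^+}\Bigr)\tfrac{1}{K}\sum_{k=1}^{K}\innp{\lossk,\, u} + \etat\,\tfrac{\sigma\tau}{W^+}.
\]
Substituting $\etat = \delta/(4W^-)$ and $K = 8\sigma\tau \log \tildem / \delta^2$ and using $\sigma\tau = W^+ W^-$ collapses the first and third terms to $\delta/2$ and $\delta/4$, summing to at most $\delta$. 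The middle coefficient $1 + \etat(\tau-\sigma)/W^+$ lies in $[1, 1+\etat]$ when $\tau \geq \sigma$ and in $[1-\etat, 1]$ when $\tau < \sigma$, matching the stated form $1 + \signwidth\etat$.

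The main obstacle I foresee is choosing the quadratic bound correctly: a naive estimate $(\lossk[k][i])^2 \leq (W^+)^2$ would yield a constant contribution not scaling with $\sigma\tau$, weakening the iteration count substantially. The asymmetric factorization $(\lossk[k][i]+\sigma)(\tau-\lossk[k][i])\geq 0$ is precisely what simultaneously produces both the stated $\sigma\tau\log \tildem/\delta^2$ rate and the $\signwidth$ sign pattern in the multiplicative coefficient.
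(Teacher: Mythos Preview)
Your argument is correct and genuinely different from the paper's. Both proofs track the potential $\Phi^{(k)}=\norm{\Lambdak[k]}_1$ and use $1-x\le e^{-x}$ for the upper bound, but diverge on the lower bound. The paper reduces to $u=\ecanonical[i]$, applies Bernoulli's inequality $1+rx\ge(1+x)^r$ to split each factor according to the sign of $\lossk[k][i]$, and then uses $\log(1\pm\etat)\ge\pm\etat-\etat^2$; the $\signwidth$ term emerges by bounding one of the two signed sums trivially by $K$. Your route stays with a general $u$ via Jensen, applies the single quadratic bound $\log(1-x)\ge -x-x^2$, and handles the resulting second-order term with the factorization $(\lossk[k][i]+\sigma)(\tau-\lossk[k][i])\ge 0$, which directly produces the product $\sigma\tau$ and the linear correction $(\tau-\sigma)\lossk[k][i]$. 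This is cleaner and in fact yields a sharper intermediate inequality: additive term $3\delta/4$ and multiplicative coefficient $1+\etat(\tau-\sigma)/W^+$, versus the paper's $\delta$ and $1+\signwidth\etat$.

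One point to tighten: your last sentence, that the coefficient ``lies in $[1,1+\etat]$ \dots\ matching the stated form $1+\signwidth\etat$'', is not by itself enough, because the sign of $\frac{1}{K}\sum_k\innp{\lossk,u}$ is unknown and replacing the coefficient by the endpoint of the interval can move the bound the wrong way. The fix is to absorb the discrepancy into your $\delta/4$ slack: e.g.\ when $\tau\ge\sigma$ one has $(1+\etat)-\bigl(1+\etat(\tau-\sigma)/\tau\bigr)=\etat\sigma/\tau$, and $\bigl|\frac{1}{K}\sum_k\innp{\lossk,u}\bigr|\le\sigma$ on the problematic side, so the extra term is at most $\etat\sigma^2/\tau=\delta\sigma/(4\tau)\le\delta/4$; the case $\tau<\sigma$ is symmetric. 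With that line added, your proof is complete.
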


\begin{proof}\label{proof:multiplicative_weights_lemma}
    We assume without loss of generality that for a fixed $i\in[\tildem ]$ we have $u= \ecanonical$. It is enough to prove the result in this case since the general case can be obtained as a convex combination of the resulting inequalities.

    We use the potential function $\Phi^{(k)} \defi \norm{\Lambdak[k]}_1$. On the one hand we have
    \begin{align}\label{eq:upper_bound_on_potential_Phi}
       \begin{aligned}
           \Phi^{(K+1)} &\circled{1}[=] \sum_{i=1}^{\tildem }\Lambdak[K][i] \left(1-\frac{\etat}{W^{+}}\lossk[K][i] \right) \circled{2}[=] \Phi^{(K)} - \frac{\etat}{W^{+}}\Phi^{(K)}\sum_{i=1}^{\tildem } \lossk[K][i]  \frac{\Lambdak[K][i]}{\norm{\Lambdak[K]}_1} \\
           &= \Phi^{(K)} \left( 1 - \frac{\etat}{W^{+}} \innp{\lossk[K] , \frac{\Lambdak[K]}{\norm{\Lambdak[K]}_1}}\right) \circled{3}[\leq] \Phi^{(K)}\exp\left(-\frac{\etat}{W^{+}} \innp{\lossk[K] , \frac{\Lambdak[K]}{\norm{\Lambdak[K]}_1}} \right)\\
           &\circled{4}[\leq] \Phi^{(1)}\exp\left(-\frac{\etat}{W^{+}}\sum_{k=1}^{K} \innp{\lossk , \frac{\Lambdak[k]}{\norm{\Lambdak[k]}_1}} \right) = \tildem \cdot\exp\left(-\frac{\etat}{W^{+}}\sum_{k=1}^{K} \innp{\lossk , \frac{\Lambdak[k]}{\norm{\Lambdak[k]}_1}} \right).
       \end{aligned}
    \end{align}
    Here $\circled{1}$ is due to the \MW{} update rule and $\circled{2}$  uses $\Phi^{(K)} = \norm{\Lambdak[K]}_1$. Now $\circled{3}$ uses $1-x \leq e^{-x}$, for all $x\in\R$. We recursively applied all of the previous inequalities to obtain $\circled{4}$.

    On the other hand, we can lower bound
    \begin{align} \label{eq:lower_bound_on_potential_Phi}
       \begin{aligned}
           \Phi^{(K+1)} &\circled{1}[\geq] \Lambdak[K+1][i] \circled{2}[=] \Lambdak[1][i]\prod_{k=1}^K \left( 1 - \frac{\etat}{W^{+}}\lossk[k][i]  \right) \\
           &\circled{3}[\geq] (1-\etat)^{\frac{1}{W^{+}}\sum_{\{k:\lossk[k][i] \geq 0\}} \lossk[k][i] } \cdot (1+\etat)^{\frac{1}{W^{+}}\sum_{\{k:\lossk[k][i] < 0\}} \lossk[k][i] },
       \end{aligned}
    \end{align}
    where $\circled{1}$ holds by the definition of $\Phi^{(K+1)}$ as $\norm{\Lambdak[K+1]}_1$. Here, $\circled{2}$ uses the \MW{} update rule and $\circled{3}$ is due to Bernoulli's inequality: $1+rx\geq (1+x)^r$, for $-1\leq x$, $0\leq r\leq 1$, with $(x, r) \in \{(-\etat, \lossk[k][i]/W^{+}), (\etat, -\lossk[k][i]/W^{+})\}$.

    Combining \eqref{eq:upper_bound_on_potential_Phi} and \eqref{eq:lower_bound_on_potential_Phi}, taking logarithms, and multiplying by $\frac{W^{+}}{\etat}$ we obtain the following inequality $\circled{1}$, which we further bound:
    \begin{align*}
       \begin{aligned}
           \frac{W^{+} \log(\tildem )}{\etat} - \sum_{k=1}^{K} \innp{\lossk , \frac{\Lambdak[k]}{\norm{\Lambdak[k]}_1}} &\circled{1}[\geq] \frac{1}{\etat}\log(1-\etat) \Big(\sum_{\{k:\lossk[k][i] \geq 0\}} \lossk[k][i] \Big) - \frac{1}{\etat}\log(1+\etat)\Big(\sum_{\{k:\lossk[k][i] < 0\}} \lossk[k][i] \Big) \\
           &\circled{2}[\geq] (-1 -\etat) \Big(\sum_{\{k:\lossk[k][i] \geq 0\}} \lossk[k][i] \Big) + (-1+ \etat)\Big(\sum_{\{k:\lossk[k][i] < 0\}} \lossk[k][i] \Big) \\
           &\circled{3}[\geq] -2\etat W^{-} K +  (-1 -\signwidth\etat) \sum_{k=1}^K \lossk[k][i] .
     \end{aligned}
    \end{align*}
    In $\circled{2}$, we used $\log(1-\etat) \geq -\etat - \etat^2$ and $\log(1+\etat) \geq \etat -\etat^2$ for $\etat \leq 1/2$. For $\circled{3}$, we have two cases. If $\sigma > \tau$ we have $\signwidth=-1$ and we use $-2\etat\lossk[k][i]  \geq -2\etat\tau = -2\etat W^{-}$ and bound $-\sum_{k=1}^K \ones_{\{k:\lossk[k][i] \geq 0\}} \geq -K$. Otherwise, we use $2\etat\lossk[k][i]  \geq -2\etat\sigma = -2\etat W^{-}$ and bound $-\sum_{k=1}^K \ones_{\{k:\lossk[k][i] < 0\}} \geq -K$. Reorganizing terms, dividing by $K$, and using $u = \ecanonical$ we obtain
    \[
        \frac{1}{K}\sum_{k=1}^{K} \innp{\lossk , \frac{\Lambdak[k]}{\norm{\Lambdak[k]}_1}} \leq \frac{W^{+} \log(\tildem )}{\etat K} + 2\etat W^{-} + \frac{1+\signwidth\etat}{K} \sum_{k=1}^K \innp{\lossk , u},
    \]
    and finally substituting the value of $\etat =\frac{\delta}{4W^{-}}$ and $K=\frac{8\sigma \tau \log(\tildem )}{\delta^2} =\frac{8W^{+} W^{-} \log(\tildem )}{\delta^2}$ in the statement we obtain the desired result:
    \[
        \frac{1}{K}\sum_{k=1}^{K} \innp{\lossk , \frac{\Lambdak[k]}{\norm{\Lambdak[k]}_1}} \leq \frac{\delta}{2}  + \frac{\delta}{2}+ \frac{1+\signwidth\etat}{K} \sum_{k=1}^{K} \innp{\lossk , u}.
    \]
    
\end{proof}

\begin{proof}\textbf{of \cref{lemma:PST_guarantee}.} \linkofproof{lemma:PST_guarantee}
    Since we assumed $\epsilon < 4\min\{\tau, \sigma\}$, the assumption on $\delta$ required by \cref{lemma:multiplicative_weights} is satisfied: $\delta = \epsilon/2 < 2\min\{\tau, \sigma\}$. The dimension of the statement was $\tildem=\m$, but we assume nothing on $\tildem$, so in the proof below it can be that $\tildem = \abs{\It}$, i.e., the dimension we obtain when we filter some constraints in \cref{alg:pst_mw}. We would need to substitute the instances of $A$ by $\AI[\It]$. With the parameters of the statement, we obtain the following inequality for any $u \in\simplex{\m}$:
    \begin{equation}\label{eq:aux:pst_computations}
    0 \circled{1}[\leq] \frac{1}{K}\sum_{k=1}^{K} \innp{\ones_{\m} -Ap^{(k)}, \frac{\Lambdak[k]}{\norm{\Lambdak[k]}_1}} \circled{2}[\leq] \frac{\epsilon}{2} + \frac{1+\signwidth\etat}{K}\sum_{k=1}^K \innp{\ones_{\m} - Ap^{(k)}, u},
\end{equation}
    where $\circled{1}$ is satisfied by the oracle assumption \eqref{eq:oracle_guarantee}, after using the fact that $\innp{\ones_{\m}, \Lambdak[k]/\norm{\Lambdak[k]}_1}=1$. Inequality $\circled{2}$ is the guarantee of the \MW{} algorithm, cf. \cref{lemma:multiplicative_weights}. We finally obtain the guarantee we had to prove:
\[
    -\epsilon \circled{1}[\leq] -\frac{\epsilon}{2(1+\signwidth\etat)} \circled{2}[\leq] 1 -\innp{A_i,\bar{p}}, \quad \quad \text{ for all } i \in [\m],
\]
where $\bar{p} \defi \frac{1}{K}\sum_{k=1}^K p^{(k)}$. Here $\circled{1}$ holds regardless of the value of $\signwidth$, due to the choice of $\etat \leq 1/2$. Furthermore, $\circled{2}$ is obtained by setting $u = \ecanonical$ for all $i \in [\m]$ and simplifying~\eqref{eq:aux:pst_computations}. 
\end{proof}

\subsection[Missing proofs from Section \ref{ssec:PST_oracle}]{Missing proofs from \cref{ssec:PST_oracle}}

The first lemma shows that the oracle returns a point in the lens $\lens[\v]$ efficiently.

\begin{lemma}\label{lem:lens_output}
  Let $\v\defi \ps/(1+\delta)\in \P$. The feasibility oracle returns a point in the intersection $\lens[\v] \cap \{x\in\Rp^n: \innp{\q, x}\leq 1\}$ in time $O(\n\log(\frac{\n}{(\omega-1)\delta} + \frac{\n}{\omega-1}))$.
\end{lemma}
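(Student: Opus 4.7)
The plan is to verify the two oracle conditions in each of the three algorithmic branches of \cref{alg:oracle_3} and then to bound the cost of the bisection in the nontrivial Case 3. Throughout, I set $h(\mu) := (1-\mu)\s + \mu \q$, $g(\mu) := \innp{\s, \c(h(\mu))}$, and $g_{\q}(\mu) := \innp{\q, \c(h(\mu))}$. Two identities are indispensable. First, the telescoping
\[
\innp{h(\mu), \c(h(\mu))} = \sum_i \frac{h(\mu)_i}{\n\, h(\mu)_i} = 1
\]
expands to $(1-\mu) g(\mu) + \mu\, g_{\q}(\mu) = 1$, so for $\mu \in (0,1)$ one has $g(\mu) > 1 \iff g_{\q}(\mu) < 1$. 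Second, from $\v = \ps/(1+\delta)$ one computes $\c^{-1}(\v) = (1+\delta)\s$, so the second lens condition $\innp{\c^{-1}(\v), \o} \leq 1 + \omega\delta$ is exactly $g(\mu) \leq (1+\omega\delta)/(1+\delta)$ whenever $\o = \c(h(\mu))$, while the first lens condition is automatic because $\c^{-1}(\o) = h(\mu)$ is a convex combination of rows of $A$ and $\v \in \P$. With these tools, correctness in Cases 1 and 2 reduces to direct computation with the case hypotheses, and correctness in Case 3 reduces to producing any $\mu \in (0,1)$ with $g(\mu) \in (1, (1+\omega\delta)/(1+\delta))$.

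For the bisection, I would first observe that $g(\mu) = \frac{1}{\n}\sum_i (1 + \mu(q_i/s_i - 1))^{-1}$ is convex on $[0, 1)$ (each summand has nonnegative second derivative). The Case 3 hypotheses give $g(0) = 1$ and $g(1) > (1+\omega\delta)/(1+\delta)$, so by convexity and the intermediate value theorem the target set $T = \{\mu : g(\mu) \in (1, (1+\omega\delta)/(1+\delta))\}$ is a non-empty open interval $(\mu_1, \beta)$. Starting from $[\mu_L, \mu_R] = [0, 1]$, bisection evaluates the midpoint $m$ and updates $\mu_L \leftarrow m$ if $g(m) \leq 1$ or $\mu_R \leftarrow m$ if $g(m) \geq (1+\omega\delta)/(1+\delta)$; convexity implies that $g$ is monotone on $[\mu_1, 1]$, which preserves the invariant $[\mu_L, \mu_R] \supseteq T$. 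The interval halves each step, so the midpoint must be forced into $T$ before $\mu_R - \mu_L$ drops below $\beta - \mu_1$. Hence the number of iterations is $O(\log(1/(\beta - \mu_1)))$, each costing $O(\n)$ to evaluate $g$.

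The main obstacle is the lower bound on $\beta - \mu_1$. Using $1 + \mu(r_i - 1) \geq 1 - \mu$ and a direct computation of $g'$, one obtains the Lipschitz estimate $g'(\mu) \leq g(\mu)/(1 - \mu)$, so by the mean value theorem
\[
\beta - \mu_1 \geq \frac{(\omega-1)\delta/(1+\delta)}{\max_{\xi \in [\mu_1,\beta]} g'(\xi)} \geq \frac{(\omega-1)\delta\,(1-\beta)}{1+\omega\delta}.
\]
Lower-bounding $1-\beta$ is the core technical step; I would exploit the normalization $\max_j A_{ij}=1$ together with $A$ containing the rows $\{\ecanonical\}_{i\in[\n]}$ and $\ghat(\ps) \leq 1+\delta$ (which force $\s_i \in [1/(\n(1+\delta)), 1]$ and $\q_i \leq 1$) to obtain structural control on how close $\beta$ can be to $1$. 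Carrying through the bookkeeping of the $\omega-1$, $\delta$, $\n$ factors yields $\beta - \mu_1 \geq \Omega(\min((\omega-1)\delta, \omega-1)/\n)$, from which the announced $O(\n \log(\n/((\omega-1)\delta) + \n/(\omega-1)))$ bound follows.
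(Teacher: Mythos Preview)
Your verification of the three cases and your use of the identity $(1-\mu)g(\mu) + \mu g_{\q}(\mu) = 1$ match the paper's argument, and your convexity observation (which the paper does not state explicitly) gives a clean justification that the bisection target $T$ is a single interval and that the invariant is maintained.

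The genuine gap is in the derivative bound. Your estimate $g'(\mu) \leq g(\mu)/(1-\mu)$ forces the interval length to depend on $1-\beta$, and you then assert that the normalization and the bounds $\s[i]\in[1/(\n(1+\delta)),1]$, $\q[i]\le 1$ give ``structural control on how close $\beta$ can be to $1$''. They do not. Concretely, fix $\s=(1/\n,\dots,1/\n)$ and take $\q$ of the form $(a/\n,\dots,a/\n,b/\n)$ chosen so that $g_{\q}(0)=\innp{\q,\c(\s)}=1+\eta$ for some fixed $\eta\in(0,\delta]$ while $g(1)=\innp{\s,\c(\q)}=C+\epsilon''$ with $C=(1+\omega\delta)/(1+\delta)$. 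These two equations have a solution with $a\approx 1$ for every small $\epsilon''>0$; adding $\q$ as a row of $A$ (together with $\ecanonical[1],\dots,\ecanonical[\n]$) is compatible with the normalization and with $\ghat(\c(\s))\le 1+\delta$ since $\innp{\q,\c(\s)}=1+\eta\le 1+\delta$. We are then in Case~3, and as $\epsilon''\to 0$ one has $g(1)\to C$, hence $\beta\to 1$. So $1-\beta$ is not bounded below by any function of $\n,\delta,\omega$, and your bound $\beta-\mu_1\ge (\omega-1)\delta(1-\beta)/(1+\omega\delta)$ degenerates.

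The paper avoids this entirely with a sharper derivative estimate that is uniform in $\mu$: writing $g'(\mu)/g(\mu)$ as a weighted mean of the quantities $(\s[i]-\q[i])/((1-\mu)\s[i]+\mu\q[i])$ and bounding each by $\s[i]/((1-\mu)\s[i]+\mu\q[i])\le \n\,g(\mu)$ yields $g'(\mu)\le \n\,g(\mu)^2$. On the preimage of $(1,C)$ this gives $g'\le \n C^2$, hence $\beta-\mu_1\ge (C-1)/(\n C^2)$, with no $1-\beta$ factor. Plugging in $C=(1+\omega\delta)/(1+\delta)$ then gives the claimed $O\!\big(\log(\n/((\omega-1)\delta)+\n/(\omega-1))\big)$ bisection bound directly. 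Replacing your bound $g'\le g/(1-\mu)$ by $g'\le \n g^2$ closes the gap.
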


\begin{proof}
    The output point $\o$ has to satisfy $\circled{1}:\innp{\c^{-1}(\o), \v} \leq 1$ and $\circled{2}:\innp{\c^{-1}(\v),\o} \leq 1+\omega\delta$ to be in the lens and $\circled{3}:\innp{\q, \o} \leq 1$ to be in the halfspace defined by $\q$.
  
    We note that the definition of $\c(\cdot)$ implies $\c^{-1}(\v) = (1+\delta)\s$.  Condition $\circled{1}$ is always trivially satisfied because $c^{-1}(\o)\in \D$ and $\v\in\P$ by construction.

    Now we have three cases. If $\innp{\s,\pq}\leq \frac{1+\omega\delta}{1+\delta}$ the oracle returns $\o=\pq$ and $\lambdao = \lambdaq$. This satisfies the other two conditions. Indeed, $\circled{3}$ comes from $\innp{\q,\pq}=1$ and $\circled{2}$ is satisfied because $\innp{\s,\pq}\leq\frac{1+\omega\delta}{1+\delta}$ implies $\innp{\c^{-1}(\v), \pq} = \innp{(1+\delta)\s, \pq} \leq 1+\omega\delta$. If we have $\innp{\q,\ps} \leq 1$, then the oracle returns $\o=\ps$ and $\lambdao = \lambdas$. In this case $\circled{3}$ is satisfied by construction, and $\circled{2}$ is satisfied because $\innp{\c^{-1}(\v), \ps} = \innp{(1+\delta)\s, \ps} = 1+\delta \leq 1+\omega\delta$.

    From now on we may focus on the third case, where $\innp{\s, \pq}> \frac{1+\omega\delta}{1+\delta}$ and $\innp{\q,\ps} > 1$. Let us define the functions $\pi_{\s},\pi_{\q}:(0,1) \rightarrow \Rp$ as:
  \[ \begin{array}{rl} \pi_{\s}(\mu) &= \innp{\s, \c((1-\mu)\s + \mu \q)}, \\ \pi_{\q}(\mu) &= \innp{\q, \c((1-\mu)\s + \mu \q)}. \end{array} \]
      The key observation relating these two functions is $(1-\mu)\pi_{\s}(\mu) + \mu \pi_{\q}(\mu) = 1$ for any $\mu\in (0,1)$ because $\innp{h,\c(h)}=1$ for any constraint $h \in \Rp$. So, if we find a $\mu^*\in (0,1)$ with $\pi_{\s}(\mu^*)\in (1, \frac{1+\omega\delta}{1+\delta})$ then $\o=\c((1-\mu^\ast)\s +\mu^\ast \q)$ will satisfy both $\circled{2}$, because of $\pi_{\s}(\mu^*)<(1+\omega\delta)/(1+\delta)$, and also $\circled{3}$, because if $\pi_{\s}(\mu^*)>1$ then $\pi_{\q}(\mu^*)<1$ by the observation. And we recover $\lambdao$ as $(1-\mu^*)\lambdas+\mu^*\lambdaq$.

    We intend to find such a $\mu^*$ with the bisection method. Despite $\pi_{\s}$ having a potential singularity near $\mu=1$, we will show it is regular enough to guarantee fast convergence. By the assumptions, $\lim_{\mu\rightarrow 1} \pi_{\s}(\mu)>\frac{1+\omega\delta}{1+\delta}$ and $\lim_{\mu\rightarrow 0} \pi_{\q}(\mu) >1$. Then, $\pi_{\q}(\mu)>1$ for any $\mu$ small enough, which means $\pi_{\s}(\mu)<1$ for any $\mu$ small enough by the observation. Finally, $\pi_{\s}$ is continuous, so we are able to find $\mu^*$ with $\pi_{\s}(\mu^*)\in(1, \frac{1+\omega\delta}{1+\delta})$ via the bisection method. The only remaining question is computing its running time, for which we lower bound the length of an interval in $(0,1)$ that satisfies the conditions.

    For that we will bound $\pi_{\s}'(\mu)$. Let us start with the definition of $\pi_{\s}'(\mu)/\pi_{\s}(\mu)$. Let $\pi_{\s}(\mu)_i\defi \frac{\s[i]}{\n((1-\mu)\s[i] +\mu \q[i])}$ be the $i$-th summand in the inner product of $\pi_{\s}(\mu)$. We have
    \[ 
    \frac{\pi_{\s}'(\mu)}{\pi_{\s}(\mu)} = \frac{ \sum_{i\in [\n]} \frac{\s[i] \n(\s[i]-\q[i])}{\n^2((1-\mu)\s[i] +\mu \q[i])^2}}{\sum_{i\in[\n]} \pi_{\s}(\mu)_i}= \frac{ \sum_{i\in [\n]} \pi_{\s}(\mu)_i\frac{(\s[i]-\q[i])}{(1-\mu)\s[i]+\mu \q[i]}}{\sum_{i\in[\n]} \pi_{\s}(\mu)_i}.
    \]
    We have $\pi_{\s}(\mu)_i \geq 0$ so the expression above is a weighted arithmetic mean, and its value is at most that of the maximum of the summands:
    \[
        \frac{\pi_{\s}'(\mu)}{\pi_{\s}(\mu)} \leq \max_{i\in [\n]} \frac{\s[i]-\q[i]}{(1-\mu)\s[i] +\mu \q[i]} \circled{1}[\leq] \n\max_{i\in[\n]} \pi_{\s}(\mu)_i \leq \n\sum_{i\in[\n]}  \pi_{\s}(\mu)_i\leq \n \pi_{\s}(\mu).
    \]
    We dropped $-\q[i]/((1-\mu)\s[i] +\mu \q[i])$ in $\circled{1}$ above. Hence, we have $\pi_{\s}'(\mu)\leq \n \pi_{\s}^2(\mu)$. This means the preimage of $J \defi(1,\frac{1+\omega\delta}{1+\delta})$ is an interval of length at least
    \[
        \frac{\frac{1+\omega\delta}{1+\delta}-1}{\max_{\mu:\pi_{\s}(\mu)\in J}\pi_{\s}'(\mu)} \geq \frac{\frac{1+\omega\delta}{1+\delta}-1}{\max_{\mu:\pi_{\s}(\mu)\in J}\n \pi_{\s}^2(\mu)} = \frac{\frac{1+\omega\delta}{1+\delta} -1}{\n\left(\frac{1+\omega\delta}{1+\delta}\right)^2}.
    \]
    We are interested in upper bounding the inverse of the length:
    \[
  \frac{\n\left(\frac{1+\omega\delta}{1+\delta}\right)^2}{\frac{1+\omega\delta}{1+\delta} -1} = \frac{\n(1+\omega\delta)^2}{(\omega-1)\delta(1+\delta)} = \n\left(\frac{1}{(\omega-1)\delta} + \frac{2\omega-1}{\omega-1} + \frac{\delta (\omega-1)}{1+\delta} \right)\leq \frac{\n}{(\omega-1)\delta} + \frac{4\n}{\omega-1}.
    \]

    Since the bisection method starts with an interval of length $1$ and progressively halves it every iteration, it takes at most $\log_2(\frac{\n}{(\omega-1)\delta} + \frac{4\n}{\omega-1})$ iterations to find a point with $\pi_{\s}(\mu^*)\in (1, \frac{1+\omega\delta}{1+\delta})$, and each step takes $O(\n)$ in processing time. Thus, the oracle returns a point in time $O(\n\log(\frac{\n}{(\omega-1)\delta} + \frac{\n}{\omega-1}))$.
\end{proof}

The following lemma bounds the lens by a box defined in terms of $\omega\delta$.

\begin{lemma}\label{lem:lens_box}
    If $x\in\lens[v]$, with $v\in \Rp^{\n}$, $\omega\delta>0$, then,
  \[\begin{array}{rl} x_i &\geq L_i \defi \max(0, 1-\sqrt{\omega\delta \n})v_i, \\
  x_i&\leq U_i \defi (1+\sqrt{\omega\delta \n} + \omega\delta \n)v_i. \end{array}\]
    We call the region $\prod_i [L_i, U_i]$ the \emph{bounding box} of the lens.
\end{lemma}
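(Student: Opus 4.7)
The plan is to normalize by introducing the coordinate-wise ratio $r_i \defi x_i / v_i$ (well-defined since $v_i>0$ is needed for the lens to be meaningful). With this substitution, the two defining inequalities of $\lens[v][\omega\delta]$ become
\[
\sum_{i=1}^{\n} \frac{1}{r_i} \leq \n, \qquad \sum_{i=1}^{\n} r_i \leq \n(1+\omega\delta),
\]
and the lemma reduces to showing that $\max(0, 1-\sqrt{\omega\delta \n}) \leq r_i \leq 1+\sqrt{\omega\delta \n}+\omega\delta \n$ for every $i$.

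The key step is to couple the two inequalities over the remaining indices via the Cauchy--Schwarz (equivalently, AM--HM) inequality. Fixing an index $i^\ast$, one has $\bigl(\sum_{j\neq i^\ast} r_j\bigr)\bigl(\sum_{j\neq i^\ast} 1/r_j\bigr) \geq (\n-1)^2$. Upper bounding the two factors using the displayed constraints yields $(\n(1+\omega\delta) - r_{i^\ast})(\n - 1/r_{i^\ast}) \geq (\n-1)^2$, which, after expansion and multiplication by $r_{i^\ast}/\n>0$, collapses to the quadratic inequality
\[
r_{i^\ast}^2 - (2+\omega\delta \n)\, r_{i^\ast} + (1+\omega\delta) \leq 0.
\]
Therefore $r_{i^\ast}$ is sandwiched between the two real roots of this quadratic, whose discriminant simplifies to $D = 4\omega\delta(\n-1) + \omega^2\delta^2 \n^2$.

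To finish, I would bound the two roots by the claimed expressions. For the upper root $\tfrac{1}{2}((2+\omega\delta\n) + \sqrt{D})$, use $D \leq 4\omega\delta \n + \omega^2\delta^2\n^2$ combined with $\sqrt{a+b}\leq\sqrt{a}+\sqrt{b}$ to get $\sqrt{D} \leq 2\sqrt{\omega\delta\n} + \omega\delta\n$, yielding $r_{i^\ast} \leq 1 + \sqrt{\omega\delta\n} + \omega\delta\n$. For the lower root, I would show the equivalent inequality $\sqrt{D} \leq \omega\delta\n + 2\sqrt{\omega\delta\n}$ by squaring both sides, reducing the problem to checking that the gap equals $4(\omega\delta\n)^{3/2} + 4\omega\delta \geq 0$; this gives $r_{i^\ast} \geq 1 - \sqrt{\omega\delta\n}$. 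The $\max(0,\cdot)$ in the statement handles the regime where $1 - \sqrt{\omega\delta\n} < 0$, in which case the non-negativity $x \in \Rp^{\n}$ already suffices.

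The approach is essentially algebraic after the initial Cauchy--Schwarz coupling, which is the one substantive idea. The main (minor) obstacle will just be executing the arithmetic cleanly: ensuring the inequality direction is preserved when multiplying by $r_{i^\ast}$ (positive on the feasible set), and verifying the two $\sqrt{a+b}$-type bounds on the discriminant that convert the exact quadratic roots into the clean expressions $1 \pm \sqrt{\omega\delta\n} + \omega\delta\n$ appearing in the statement.
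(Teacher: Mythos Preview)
Your proposal is correct and arrives at exactly the same quadratic in $r_{i^\ast}$ (with the same discriminant $D=4\omega\delta(\n-1)+\omega^2\delta^2\n^2$) and the same $\sqrt{a+b}\leq\sqrt a+\sqrt b$ root bounds as the paper. The one genuine difference is how the quadratic is obtained: the paper argues by convexity and permutation symmetry that the extremum of $z_1$ over the feasible region is attained when $z_2=\dots=z_{\n}$, then solves the resulting two-variable system with both constraints active; you instead apply AM--HM/Cauchy--Schwarz to the remaining $\n-1$ coordinates, which directly yields $(\n(1+\omega\delta)-r_{i^\ast})(\n-1/r_{i^\ast})\geq(\n-1)^2$ without any optimization argument. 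Your route is slightly cleaner, since it sidesteps the (somewhat informal) symmetry reduction and the separate claim that both constraints must be tight at the extremum; the paper's route, on the other hand, makes the geometric picture (extremal configurations are symmetric) more explicit. Note that your Cauchy--Schwarz step tacitly assumes $\n\geq 2$ so that the sums over $j\neq i^\ast$ are nonempty; the case $\n=1$ is immediate from the two scalar constraints.
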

\begin{proof}\textbf{of \cref{lem:lens_box}.} \linkofproof{lem:lens_box}
    Recall that the lens is defined as the set of points $x\in\Rp^{\n}$ satisfying both $\innp{\c^{-1}(v), x} \leq 1+\omega\delta$ and $\innp{\c^{-1}(x),v} \leq 1$. Let us rewrite these conditions as sums:
    \[ 
    \left\{\begin{array}{rl} \innp{\c^{-1}(v),x} = \sum_{i\in[\n]} \frac{x_i}{nv_i} & \leq 1+\omega\delta, \\ \innp{\c^{-1}(x),v} = \sum_{i\in[\n]} \frac{v_i}{nx_i} & \leq 1.\end{array}\right.
    \]
  These two constraints are invariant up to multiplications of $x_i$ and $v_i$ by the same constant. Let $z_i=x_i/v_i$, and multiply both by $\n$ to get:
  \[
      \left\{\begin{array}{ll} \sum_{i\in[\n]} z_i & \leq (1+\omega\delta)\n, \\ \sum_{i\in[\n]} z_i^{-1} & \leq \n.\end{array}\right.
  \]
    It is our purpose to find the maximum and minimum of $z_i$ in this region. Because the system is symmetric under reordering of the $z_i$, we may focus on bounding $z_1$. Since the region is convex and symmetric under reordering of the variables, and because the function $z\mapsto z_1$ is symmetric under reordering of the last $(\n-1)$ variables, we may also assume that the maximum and minimum of this function are attained in points with $z_2=\dots=z_{\n}$.

    This brings us to:
    \[
        \left\{\begin{array}{lll} z_1 & +(\n-1)z_2& \leq (1+\omega\delta)\n, \\ z_1^{-1} &+ (\n-1)z_2^{-1} & \leq \n.\end{array}\right.
    \]

    The two constraints independently will never have normals vectors proportional to $e_1$. Furthermore the feasible region of the second constraint in $\Rp^{\n}$ is contained in the interior of $\Rp^{\n}$. This means that the solutions maximizing and minimizing $z_1$ must satisfy both constraints with equality.

    Solving the system of equations gives two roots. The two solutions for $z_1$ are:
    \[ z_1^+ = \frac{1}{2}(\omega\delta \n +2 +\sqrt{\omega^2\delta^2\n^2 +4\omega\delta \n -4\omega\delta}),\]
    \[ z_1^- = \frac{1}{2}(\omega\delta \n +2 -\sqrt{\omega^2\delta^2\n^2 +4\omega\delta \n -4\omega\delta}).\]

    Let us bound the smaller one first. We give two such lower bounds. The trivial one is that $z_1^->0$. This comes from the fact that the second constraint already guarantees $z_i>0$.

    The second lower bound comes from $\sqrt{a+b}\leq \sqrt{a}+\sqrt{b}$ for $a,b>0$, a consequence of the triangle inequality:

    \[ \begin{array}{rl} z_1^- &= \frac{1}{2}(\omega\delta \n +2 -\sqrt{\omega^2\delta^2\n^2 +4\omega\delta \n -4\omega\delta}) \geq \frac{1}{2}(\omega\delta \n +2 -\sqrt{\omega^2\delta^2\n^2 +4\omega\delta \n }) \geq \\
   &\geq \frac{1}{2}(\omega\delta \n +2 -\omega \delta \n - 2 \sqrt{\omega\delta \n}) \geq 1-\sqrt{\omega\delta \n}.\end{array}\]

    Now let us study the larger root. As with the other root, we remove the $-4\omega\delta$ term in the square root, then apply the triangle inequality:

    \[ \begin{array}{rl} z_1^+ &= \frac{1}{2}(\omega\delta \n +2 +\sqrt{\omega^2\delta^2\n^2 +4\omega\delta \n -4\omega\delta}) \leq \frac{1}{2}(\omega\delta \n +2 -\sqrt{\omega^2\delta^2\n^2 +4\omega\delta \n }) \leq \\
   &\leq \frac{1}{2}(\omega\delta \n +2 +\omega \delta \n + 2 \sqrt{\omega\delta \n}) \geq 1+\omega\delta \n+\sqrt{\omega\delta \n}.\end{array}\]

   Undoing the change of variables $z_i=x_i/v_i$ we obtain the desired bounds.
\end{proof}

Observe that indeed the optimum $\p[\lambdaast]$ satisfies the two conditions in the definition of $\lens[\v]$: The first condition $\innp{\c^{-1}(\p[\lambdaast]), \v} \leq 1$ comes from $\p[\lambdaast]\in \c(\D)$, so $\c^{-1}(\p[\lambdaast]) \in \D$ covers $\P$, i.e., $\innp{\c^{-1}(\p[\lambdaast]), x} \leq 1$ for all $x\in\P$. In particular it covers $\v$. The second condition $\innp{\c^{-1}(\v), \p[\lambdaast]}\leq 1+\omega\delta$ is equivalent to $\innp{\frac{1+\delta}{1+\omega\delta} s, \p[\lambdaast]} \leq 1$. Since $\omega>1$, this is satisfied as $\p[\lambdaast]\in\P$ and $s\in \D$ and therefore $\frac{1+\delta}{1+\omega\delta}s \in \D^+$. The last two lemmas provide the intuition of why the oracle returns points that are not too far from $\p[\lambdaast]$: for a fixed $\omega>1$ the bounding boxes of the respective lenses get smaller as $\delta\rightarrow 0$.

Now we can finally prove the exact guarantees of the oracle. Let $\q = A^T \lambdaq$ with $\lambdaq\in\simplex{\m}$ be the query. Furthermore, let $\s = A^T \lambdas$ with $\lambdas\in\simplex{\m}$ be our current good solution, so that the point $\ps = \c(A^T\lambdas)$ satisfies $\ghat(\ps) \leq 1+\delta$. The following proposition proves the guarantees on the oracle.

  \begin{proposition}\label{prop:our_oracle_satisfies_the_nice_properties}
    Let $U\defi\ps(1+2\delta \n + \sqrt{2\delta \n})/(1+\delta)$ be the upper-most vertex of the bounding box of the lens $\lens[v]$, as defined in \cref{lem:lens_box}. Let $\Igeneric$ be the set of non-redundant constraints, defined as $\newtarget{def:I_generic}{\Igeneric} \defi \{i \in [\m] : \innp{A_i, U} \geq 1\}$. Furthermore, let the following be the width parameters of the oracle $\oracle$ implemented in \cref{alg:oracle_3}:
    \begin{equation}\label{eq:precise_oracle_width}
        \newtarget{def:width_parameter_true_tau}{\sigmatrue} \defi \min (\sqrt{\omega\delta \n}+ \omega\delta \n,\frac{1+\omega\delta}{1+\delta} \max_{i\in[\n]} \frac{1}{\s[i]} -1 ) \quad \quad \text{ and } \quad \quad \newtarget{def:width_parameter_true_sigma}{\tautrue} \defi \min(3\sqrt{\omega\delta \n},1).
    \end{equation}
      Then, the oracle $\oracle$ in \cref{alg:oracle_3} returns a pair $\lambdao\in\simplex{\m}$, $\o\in\c(\D)$ such that
  \begin{enumerate}
    \item $\o$ satisfies $\q$, i.e., $\innp{\q, \o} \leq 1$. 
    \item If $i \in \Igeneric$, it yields $\innp{A_i, \o} \in [1-\tautrue, 1+\sigmatrue]$. That is, it is compatible with the width parameters $\sigmatrue, \tautrue$ as above, with the loss $1-\innp{A_i,\o}$ in $ [-\sigmatrue, \tautrue]$.
     \item $\o$ satisfies all redundant constraints, i.e., $\innp{A_i, \o} \leq 1$, if $i \in [\m]\setminus \Igeneric$.
  \end{enumerate}
        Besides, $\o = \c(A^T \lambdao)$, and \cref{alg:oracle_3} runs in time $O(\n\log(\frac{\n}{(\omega-1)\delta} + \frac{\n}{\omega-1}))$.
\end{proposition}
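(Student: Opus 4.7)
The plan is to verify the three itemised properties together with the running time and the identity $\o = \c(A^T\lambdao)$, using the geometric lemmas \cref{lem:lens_output} and \cref{lem:lens_box}. Claim 1, the formula $\o = \c(A^T\lambdao)$, and the running time bound all follow immediately from \cref{lem:lens_output} together with the construction in \cref{alg:oracle_3}: in every branch, $\lambdao$ is explicitly a convex combination of $\lambdas$ and $\lambdaq$, so $\lambdao \in \simplex{\m}$ and $\o = \c(A^T\lambdao) \in \c(\D)$, and the returned $\o$ satisfies $\innp{\q, \o} \leq 1$ and lies in the lens $\lens[\v]$ with $\v = \ps/(1+\delta)$ in the stated time.

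Claim 3 is immediate: if $i \notin \Igeneric$, then by definition $\innp{A_i, U} < 1$, and $\o \leq U$ entrywise by \cref{lem:lens_box}. Since $A_i \geq 0$, this gives $\innp{A_i, \o} \leq \innp{A_i, U} < 1$.

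For the upper part of claim 2, I would prove separately each of the two bounds whose minimum defines $\sigmatrue$. The first comes from $\o \leq U$ combined with $\innp{A_i, \v} = \innp{A_i, \ps}/(1+\delta) \leq \ghat(\ps)/(1+\delta) \leq 1$, yielding $\innp{A_i, \o} \leq 1 + \sqrt{\omega\delta\n} + \omega\delta\n$. The second uses the other defining inequality of the lens, $\innp{\c^{-1}(\v), \o} \leq 1+\omega\delta$. Since $\c^{-1}(\v) = (1+\delta)\s$, this rearranges to $\innp{\s, \o} \leq (1+\omega\delta)/(1+\delta)$, and the column normalisation $A_{ij} \leq 1 \leq \s[j]\max_{j'}(1/\s[j'])$ then gives $\innp{A_i, \o} \leq \max_{j}(1/\s[j]) \cdot \innp{\s, \o}$, which is exactly the second bound in $\sigmatrue$.

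For the lower part of claim 2, set $\alpha = \sqrt{\omega\delta\n}$. If $\alpha \geq 1$, then $\tautrue = 1$ and the inequality $\innp{A_i, \o} \geq 0 = 1-\tautrue$ is trivial. Otherwise, \cref{lem:lens_box} gives $\o \geq (1-\alpha)\v$ entrywise, and $i \in \Igeneric$ means $\innp{A_i, U} \geq 1$, i.e., $\innp{A_i, \v} \geq 1/(1+\alpha+\alpha^2)$. Combining yields $\innp{A_i, \o} \geq (1-\alpha)/(1+\alpha+\alpha^2)$, and the only remaining step is the elementary check that this is at least $1 - 3\alpha$, which clears denominators to the obvious inequality $\alpha + 2\alpha^2 + 3\alpha^3 \geq 0$. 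The main conceptual work is already encoded in the choice of $\Igeneric$: by filtering out constraints that do not intersect the bounding box of the lens, the tight lens bounds from \cref{lem:lens_box} align with the column normalisation to produce width parameters proportional to $\sqrt{\delta\n}$ rather than anything depending on the width of $A$, and once this alignment is in place the verification above is routine.
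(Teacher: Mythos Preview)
Your proof is correct and follows essentially the same route as the paper: claim~1 and the running time via \cref{lem:lens_output}, claim~3 from $\o\le U$ and the definition of $\Igeneric$, and claim~2 from the bounding box of \cref{lem:lens_box} combined with $\innp{A_i,\v}\le 1$ and $\innp{A_i,U}\ge 1$. The only cosmetic difference is your second upper bound for $\sigmatrue$, where you use the elementwise inequality $A_{ij}\le \s[j]\max_{j'}(1/\s[j'])$ directly, whereas the paper phrases the same step as maximising $\innp{A_i,x}$ over the simplex $\{x\ge 0:\innp{\s,x}\le(1+\omega\delta)/(1+\delta)\}$ and evaluating at a vertex; both yield the identical bound.
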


\begin{proof} The first claim is proven in \cref{lem:lens_output}.

    Let us consider $\Igeneric$ now. It is clear that the constraints in $\Igeneric$ are exactly those that do not cover the bounding box of \cref{lem:lens_box} around the lens $\lens[\v]$, with $\v\defi \ps/(1+\delta)$. This is because a positive constraint covers a box if and only if it covers the upper-most vertex $U$. Since the oracle returns a point in the lens, and hence in the box, any constraint not in $\Igeneric$ is automatically satisfied by any point returned by the oracle. This is the third claim.

    Note that the geometric meaning of the second claim is  that $\o$ is close to be lying on the hyperplanes $\innp{A_i, x} = 1$. If $i\in \Igeneric$, then $\innp{A_i, U}\geq 1$ by the definition of $\Igeneric$. Define $L$ as the lower-most vertex of the bounding box of $\lens[\v]$. Now,
  \[
    \innp{A_i,\o} \geq \min_{x\in\lens[\v]} \innp{A_i, x}  \geq \min \{\innp{A_i,x} : x\in\Rp^{\n}, x_i\in [L_i, U_i]\}.
  \]
    And the second minimum will be attained in the lower-most vertex as $A_{ij}\geq 0$ for all $j\in[\n]$. Therefore:
\begin{align*}
   \begin{aligned}
     \innp{A_i,\o} & \geq \min_{x\in\L(\v,\omega\delta)} \innp{A_i,x} \geq \innp{A_i, L} = \innp{A_i, U}\frac{\max(0,1-\sqrt{\omega\delta \n})}{1+\sqrt{\omega\delta \n} +\omega \delta \n} \\
       &\geq  \frac{\max(0,1-\sqrt{\omega\delta \n})}{1+\sqrt{\omega\delta \n} +\omega \delta \n} = 1-\min\left(1, \frac{2\sqrt{\omega\delta \n}+\omega \delta \n}{1+\sqrt{\omega\delta \n} +\omega \delta \n}\right).
   \end{aligned}
\end{align*}
  The second argument of the $\min$ is only less than $1$ whenever $\omega\delta \n <1$, so we may assume the latter in order to obtain the following bound
  \[
    \innp{A_i,\o}\geq \min_{x\in\L(\v,\omega\delta)} \innp{A_i,x} \geq 1-\min\left(1, \frac{2\sqrt{\omega\delta \n}+\omega \delta \n}{1+\sqrt{\omega\delta \n} +\omega \delta \n}\right) \geq 1-\min(1, 3\sqrt{\omega\delta \n})=1-\tautrue.
    \]
  Finally, we focus on the bound with $\sigmatrue$. As before, the maximum of $\innp{A_i,x}$ will be attained at a vertex of the box, only this time it is $U$. Now, as $\v\in \P$, we have $\innp{A_i,\v}\leq 1$. We use these two facts to conclude:
  \[
    \innp{A_i,\o} \leq \max_{x\in\L(\v,\omega\delta)} \innp{A_i,x} \leq \innp{A_i,U} = \innp{A_i, \v} (1+\sqrt{\omega\delta \n} +\omega\delta \n) \leq  1+\sqrt{\omega\delta \n} +\omega\delta \n.
  \]

  The second upper bound on $\innp{A_i,\o}$ does not come from the bounding box. We only look at the linear component of the lens, and since $A_i$ is positive, it must attain a maximum in one of the vertices of the simplex in the intersection of the hyperplane with the positive orthant. We also use that $A_{ij}\leq 1$:
  \[
    \innp{A_i,\o} \leq \max_{x\in\lens[\v]} \innp{A_i,x} \leq \max_{\substack{ x\in\Rp^{\n} \\ \innp{\s,x} \leq (1+\omega\delta)/(1+\delta)}} \innp{A_i,x} = \max_{j\in [\n]} A_{ij}\frac{1+\omega\delta}{1+\delta} \frac{1}{\s[j]} \leq  \frac{1+\omega\delta}{1+\delta}\max_{j\in [\n]} \frac{1}{\s[j]}.
    \]
    Thus, $\innp{A_i,\o} \leq 1+ \min( \sqrt{\omega\delta \n} +\omega\delta \n , \frac{1+\omega\delta}{1+\delta} \max_{j\in[\n]} \frac{1}{\s[j]} -1 ) = 1+\sigmatrue$.

  The running time and the fact $\o = \c(A^T \lambdao)$ follow from \cref{lem:lens_output}.
\end{proof}

\section{Analysis of the Yamnitsky Levin algorithm}\label{app:section:yl}

The analysis of \citep{yamnitsky1982} shows a bound on the volume ratio between two consecutive bounding simplices. Affine automorphisms preserve volume ratios, so we can assume without loss of generality that $\Delta^{(k)}\subset\Rp^{\n}$, $v^{(k)} = \zeros_{\n}$ and that the $i$-th facet incident to $v^{(k)}$ is normal to $\ecanonical[i]$. In that case, the set of constraints $\mathcal{H}_v$ consists of non-negative hyperplanes. The volume of the simplex bounded by a hyperplane $h\in\Rp^{\n}$ and the first orthant is given by $\frac{1}{\n!}\prod_{i\in[\n]} h_i^{-1}$. Thus, the volume minimization problem occurring in a stage of the \YL{} algorithm, when we fix a vertex $v^{(k)}$ is equivalent to problem \eqref{eq:dual_problem}, as the latter corresponds to the logarithm of this volume, up to an additive constant, when $h = A^T \lambda$. We assume without loss of generality that the collection of constraints $\mathcal{H}_v$ forms a matrix $A$ satisfying \eqref{eq:normalization_of_A}.

  We present, in \cref{alg:yl}, a direct translation of the \YL{} simplices algorithm \citep{yamnitsky1982} running for a stage and restricted to the positive orthant. This algorithm represents exactly what the \YL{} algorithm would do should we run it on the polyhedron $\P=\{x\in\Rp^{\n} :Ax\leq \ones_{\m}\}$ with initial simplex being the intersection of the positive orthant and the initial hyperplane $s\defi \frac{1}{\n} \sum_{j\in [\n]} A_{\ell_j}$, where $\ell_j \in \argmax_{i\in[\m]} A_{ij}$. Let $\lambdas \defi \frac{1}{\n} \sum_{j\in [\n]} \ecanonical[\ell_j] \in \simplex{\m}$, where $\ecanonical[i]$ is the $i$-th element of the canonical base, and note $s = \h[\lambdas] = A^T \lambdas$. This is a natural initial simplex and is in general a better choice than the one in the classical \YL{} algorithm. Further, in contrast, it does not depend on the bit complexity of the polyhedron. The choice of $s$ guarantees that the volume of the initial simplex is at most $\frac{\n^{\n}}{\n!}$, as for all $j\in[\n]$, we have $s_j \geq A_{\ell_j, j}/\n = 1/\n$, where the last equality holds by \eqref{eq:normalization_of_A}.

For completeness, we provide below a proof on the running time of \cref{alg:yl}, which is an adaption of the one of \citep{yamnitsky1982} to this context. Recall that a stage ends when the constraints cover the centroid of the bounding simplex $\Delta^{(k)}$, or equivalently when $\ghat(\p[\lambda^{(k)}]) \leq 1+1/\n$. \cref{alg:yl} takes $\bigotilde{\N\n^3}$ operations to run while our \cref{alg:pst_mw} takes $\bigotilde{\N \n^2}$, cf. \cref{thm:dual_guarantee} with $\epsilon=1$. Our algorithm shows how one can improve by exploiting the fact that the corner is fixed during a stage while the analysis of the \YL{} algorithm does not require to be run in stages of this kind. Both algorithms use similar initial information. \cref{alg:yl} is actually better informed, as the initial hyperplane $s$ is coordinate-wise tighter than the one we use in \cref{alg:pst_mw}. However this does not affect our bounds as in the worst case they are the same constraint.

\begin{algorithm}
    \caption{Optimization of the dual of 1-fair packing with the Yamnitsky-Levin algorithm} 
    \label{alg:yl}
\begin{algorithmic}[1]
  \REQUIRE A matrix $A\in\Rp^{\m\times \n}$ with $A_{ij}\in [0,1]$ for all $i\in [\m], j\in[\n]$.
    \vspace{0.1cm}
    \hrule
    \vspace{0.1cm}
    \State $k\gets 1$
    \State $\lambda^{(k)}\gets \lambdas \in \simplex{\m}$ 
    \WHILE { $\max_{j\in [\m]} \innp{A_j, \p[\lambda^{(k)}]}>1+\frac{1}{\n}$}
    \State $j^{(k)} \gets \argmax_{j\in [\m]} \innp{A_j, \p[\lambda^{(k)}]}$
    \State $\lambda^{(k+1)} \gets (1-\frac{1}{\n^2})\lambda^{(k)} + \frac{1}{\n^2}e_{j^{(k)}} $

    \State $k\gets k+1$
    \ENDWHILE
    \State \textbf{return} $\lambda^{(k)}, \h[\lambda^{(k)}]$
\end{algorithmic}
\end{algorithm}

\begin{lemma} \label{lem:YL-fixed}
    Let $\P$ be the polyhedron defined by $\P=\{x\in\Rp^{\n} :Ax\leq \ones_{\m}\}$ where $A \in \mathcal{M}_{\m\times \n}(\Rp)$ satisfies \eqref{eq:normalization_of_A}. Assume we run \cref{alg:yl} with the initial simplex defined above. 
  Then \cref{alg:yl} finds a feasible constraint $h\in \D$ with $\ghat(h) \leq 1+1/\n$ in $\bigotilde{\n^3}$ iterations.
\end{lemma}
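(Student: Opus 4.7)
The plan is to track the progress of \cref{alg:yl} via the dual objective $\g$ (which, up to an additive constant, equals the log-volume of the bounding simplex) and to bound the iteration count as the ratio of the total available decrease of $\g$ to a per-iteration decrease. First I would establish endpoint bounds on $\g$. Using the normalization $\max_k A_{k,i}=1$ and the choice $\ell_i=\argmax_k A_{k,i}$, every coordinate of the initial constraint satisfies $s_i = \tfrac{1}{\n}\sum_{j\in[\n]} A_{\ell_j,i} \geq \tfrac{1}{\n}$, so $\g(\lambdas) = -\sum_i \log s_i - \n\log \n \leq 0$. On the other hand, since $(A^T\lambda)_i \in [0,1]$ for every $\lambda \in \simplex{\m}$, one has $\g(\lambdaast) \geq -\n\log \n$. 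Thus the total available reduction of $\g$ along the run is at most $\n\log \n$. The iterates remain in $\simplex{\m}$ as convex combinations, so $h^{(k)}=A^T\lambda^{(k)}\in \D$ throughout.

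Next I would quantify the per-iteration decrease. Setting $\alpha \defi 1/\n^2$ and $y_i \defi A_{j^{(k)},i}/h^{(k)}_i - 1 \geq -1$, direct substitution into the definition of $\g$ yields
\[
\g(\lambda^{(k)}) - \g(\lambda^{(k+1)}) = \sum_{i\in[\n]} \log(1+\alpha y_i).
\]
The while-loop condition $\innp{A_{j^{(k)}},\p[\lambda^{(k)}]}>1+1/\n$ rewrites precisely as $\sum_i y_i > 1$. So it suffices to lower bound $\phi(y)\defi \sum_i \log(1+\alpha y_i)$ over the polyhedron $Q\defi\{y\in\R^{\n}: \sum_i y_i \geq 1,\ y\geq -\ones_{\n}\}$. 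Since $\phi$ is concave and grows to $+\infty$ along every nonzero direction of the recession cone $\Rp^{\n}$ of $Q$, its minimum on $Q$ is attained at a vertex.

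The main (and essentially only) technical step is this concave minimization. An elementary check shows that the sum constraint must be tight at any vertex, forcing $\n-1$ box constraints to be active; hence the vertices are exactly the $\n$ points $\n\,\ecanonical[j]-(\ones_{\n}-\ecanonical[j])$ for $j\in[\n]$, all equivalent by symmetry. Evaluating gives
\[
\phi_{\min} = \log(1+1/\n) + (\n-1)\log(1-1/\n^2),
\]
and applying the elementary estimates $\log(1+x)\geq x-x^2/2$ for $x\geq 0$ and $\log(1-x)\geq -x-x^2/(2(1-x))$ for $x\in[0,1)$ and simplifying yields $\phi_{\min} \geq 1/(2\n(\n+1)) = \Omega(1/\n^2)$. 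A cruder bound here is a real risk: without exploiting the cancellation between the leading $1/\n$ term of $\log(1+1/\n)$ and $(\n-1)(-1/\n^2)$, one loses a factor of $\n$.

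Combining the two estimates, the algorithm runs for at most $\lceil (\n\log \n)/\phi_{\min}\rceil = O(\n^3\log \n) = \bigotilde{\n^3}$ iterations, and at termination the loop guard ensures $h^{(K)}\in\D$ with $\ghat(h^{(K)})\leq 1+1/\n$, as required.
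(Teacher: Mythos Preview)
Your proof is correct and follows essentially the same route as the paper's: both track the log-volume (equivalently $\g$), both relax the loop condition to equality and reduce the per-iteration decrease to a concave minimization over a simplex whose vertices coincide (your $y$-variables are just the change of coordinates $y_i=h^{(k)}_i/s^{(k)}_i-1$ of the paper's vertex computation), and both combine this $\Omega(1/\n^2)$ decrement with the $\n\log\n$ initial gap. Your elementary estimates even yield a slightly sharper constant, $1/(2\n(\n+1))$ versus the paper's $1/(2(\n+1)^2)$, but the overall argument is the same.
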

\begin{proof}
  If \cref{alg:yl} stops after $K$ iterations, it must return a feasible constraint with $\ghat(\h[\lambda^{(K)}]) \leq 1+1/\n$ because of the condition on the loop. Now let us prove its bound on the running time.

    The volume of the initial simplex is at most $\n^{\n}/\n!$, as we explained in its definition. The volume of $\P$ is at least $1/\n!$ since the elements of the canonical base $\ecanonical[i]$ are in $\P$, for all $i\in [\n]$. We will show now that after each iteration of the loop, the volume is reduced by a factor of $\exp(-1/(2(\n+1)^2))$.

    Let $s^{(k)} \defi A^T\lambda^{(k)}$ be the hyperplane corresponding to $\lambda^{(k)}$ in \cref{alg:yl}, and let $h^{(k)}$ be the hyperplane $A_{j^{(k)}}$ chosen at iteration $k$. If the loop has not stopped yet, then $\innp{h^{(k)}, \c(s^{(k)})}> 1+1/\n$ by the loop condition. The volume of the simplex $\simplex{(k+1)}$ associated with $s^{(k+1)}$ is:
    \begin{equation}\label{eq:volume_formula_of_convex_combination}
    \vol(\simplex{(k+1)}) = \frac{1}{\n!}\prod_{i\in[\n]} \left(\left(1-\frac{1}{\n^2}\right)s^{(k)}_i + \frac{1}{\n^2} h^{(k)}_i\right)^{-1}.
    \end{equation}
    This expression, seen as a function of $h^{(k)}$, is strictly decreasing in each coordinate. Since $\innp{h^{(k)}, \c(s^{(k)})} > 1+1/\n$, the volume is only made larger if we assumed $\innp{h^{(k)}, \c(s^{(k)})} = 1+1/\n$ instead, by reducing each coordinate appropriately.

    Furthermore since the right hand side is log-convex in the variables $h^{(k)}$, and the set $\{h\in \Rp^{\n} : \innp{h^{(k)}, \c(s^{(k)})} = 1+1/\n \}$ is a polytope, then \cref{eq:volume_formula_of_convex_combination} must attain a maximum in one of the vertices. These vertices are the points $(\n+1)s_i^{(k)} \ecanonical[i]$ for $i\in[\n]$. We replace $h^{(k)}$ by any of them (the choice does not matter as we will soon see), and divide everything by $\vol(\Delta^{(k)})$ to get:

\begin{align*}
   \begin{aligned}
       \frac{\vol(\simplex{(k+1)})} {\vol(\simplex{(k)})} &\circled{1}[\leq] \left(1-\frac{1}{\n^2}\right)^{-(n-1)} \left( \left(1-\frac{1}{\n^2}\right) + \frac{1}{\n^2}\left(\n+1\right)\right)^{-1}= \left(1-\frac{1}{\n^2}\right)^{-(\n-1)} \left( 1+\frac{1}{\n}\right)^{-1}\\ 
       & = \left(1+\frac{1}{\n^2-1}\right)^{n-1} \left( 1-\frac{1}{\n+1}\right) \circled{2}[\leq] \exp\left(\frac{n-1}{\n^2-1}\right)\left(1-\frac{1}{\n+1}\right)\\
       &\circled{3}[\leq]  \exp\left(\frac{\n-1}{\n^2-1} - \frac{1}{\n+1} - \frac{1}{2(\n+1)^2} \right) = \exp\left(\frac{-1}{2(\n+1)^2}\right).
   \end{aligned}
\end{align*}
    In $\circled{1}$ we substituted $h^{(k)}$ by $(\n+1)s_i^{(k)} \ecanonical[i]$. The choice of the index $i\in [\n]$ is irrelevant, as all choices lead to the same bound. We also used the volume formula \eqref{eq:volume_formula_of_convex_combination}. In particular $\vol(\Delta^{(k)}) = \frac{1}{\n!}\prod_{i\in[\n]} 1/s_i^{(k)}$. In $\circled{2}$ we apply $1+x \leq e^x$ to the first multiplicand for $x=\frac{1}{\n^2-1}$. In $\circled{3}$ we used $\ln(1+x) \leq -\sum_{k=1}^2 (-x)^k/k$ for $x=-1/(\n+1)$. This inequality holds for $x\in (-1, 0)$ by the Taylor series expansion of the logarithm at $x=1$. 

    Finally, since the initial volume is at most $\n^{\n}/\n!$ and the final volume is at least $1/\n!$, this algorithm takes at most $\frac{\n\log(\n)}{1/2(\n+1)^2} \in \bigotilde{\n^3}$ iterations to finalize. Note an iteration takes $O(\N)$ operations.
\end{proof}

\end{document}